\documentclass[10pt,a4paper,reqno]{amsart}
\usepackage{acronym}
\usepackage{amsfonts}
\usepackage{amsmath}
\usepackage{amssymb}
\usepackage{amsthm}
\usepackage{bm}
\usepackage{braket}
\usepackage{cite}
\usepackage{color}
\usepackage{geometry}
\usepackage{graphicx}
\usepackage{hyperref}
\usepackage{mathrsfs}
\usepackage{mathtools}
\usepackage{setspace}
\usepackage{stmaryrd}
\usepackage{subcaption}
\usepackage{tikz}

\usepackage{epsfig}
\usepackage{setspace}
\usepackage{booktabs}
\usepackage{threeparttable}
\usepackage{diagbox}

\newgeometry{top=2cm,bottom=2cm,outer=1.5cm,inner=1.5cm}
\newcommand{\bse}{\begin{subequations}}
\newcommand{\ese}{\end{subequations}}

\newtheorem{theorem}{Theorem}%[section]

\newtheorem{lemma}[theorem]{Lemma}
\newtheorem{assume}[theorem]{Assume}

\newtheorem{corollary}[theorem]{Corollary}
\newtheorem{proposition}[theorem]{Proposition}
\newtheorem{rhp}[theorem]{Riemann-Hilbert Problem}
\numberwithin{equation}{section}

\title[Long-time Asymptotic Behavior of the coupled dispersive AB system in Low Regularity Spaces ]{Long-time Asymptotic Behavior of the coupled dispersive AB system in Low Regularity Spaces}

\author{JinYan Zhu}
\address[JY]{School of Mathematical Sciences, Shanghai Key Laboratory of Pure Mathematics and Mathematical Practice\\
East China Normal University \\ Shanghai 200241 \\ People's Republic of China}
\author{Yong Chen$^*$}
\address[YC]{School of Mathematical Sciences, Shanghai Key Laboratory of Pure Mathematics and Mathematical Practice \\
East China Normal University \\ Shanghai 200241 \\ People's Republic of China}
\address[YC]{College of Mathematics and Systems Science \\ Shandong University of Science and Technology \\ Qingdao 266590 \\ People's Republic of China}
\email{ychen@sei.ecnu.edu.cn}

\begin{document}

\begin{abstract}
In this paper, we mainly investigate the long-time asymptotic behavior of the solution for the coupled dispersive AB system with weighted Sobolev initial data, which allows soliton solutions via the Dbar steepest descent method.
 Based on the spectral analysis of Lax pair, the Cauchy problem of the coupled dispersive AB system is transformed into a Riemann-Hilbert problem, and its existence and uniqueness of the solution is proved by the vanishing lemma. The stationary phase points play an important role in the long-time asymptotic behavior. We demonstrate that in any fixed time cone $\mathcal{C}\left(x_{1}, x_{2}, v_{1}, v_{2}\right)=\left\{(x, t) \in \mathbb{R}^{2} \mid x=x_{0}+v t, x_{0} \in\left[x_{1}, x_{2}\right], v \in\left[v_{1}, v_{2}\right]\right\}$, the long-time asymptotic behavior of the solution for the coupled dispersive AB system can be expressed by $N(\mathcal{I})$ solitons on the discrete spectrum, the leading order term $\mathcal{O}(t^{-1 / 2})$ on the continuous spectrum and the allowable residual $\mathcal{O}(t^{-3 / 4})$.
\end{abstract}

\maketitle

\section{Introduction}
In the past few decades, many solutions of integrable equations have been studied, including soliton solutions, breather solutions, rogue wave solutions and so on\cite{VAE-1978-ZETF,KN-1978-JMP,VAK-1989-PD,LB-2020-ND,ZL-2013-PRE}. These solutions are obtained by classical inverse scattering method, Riemann-Hilbert method under the condition of no reflection\cite{JH-2018-JNMP,YB-2019-NAR,LT-2022-NAR}. However, for continuous spectrum or reflection, it is necessary to analyze the asymptotic behavior of the solution. In fact, as early as 1973, Manakov et al. first studied the long-time asymptotic behavior of the solution of the fast decay initial value problem for nonlinear integrable systems by using the inverse scattering method\cite{M-1974-SPJ,AN-1973-JMP}. In 1976, Zakharov and Manakov gave the first term asymptotic expression that the solution of the initial value problem for nonlinear Schr\"{o}dinger (NLS) equation which depends explicitly on the initial value \cite{ZM-1976-SPJ}. In 1981, Its transformed the long-time behavior of the solution for the initial value problem of the NLS equation into the local Riemann-Hilbert problem(RHP) in the small neighborhood of the steady-state phase point by using the single value deformation theory, which provided a set of feasible and strict ways to analyze the long-term behavior of the integrable equation\cite{Its-1981-DAN}. In 1993, Deift and Zhou developed the nonlinear descent method for solving oscillatory RHP based on the classical descent method, and studied the asymptotic properties of the solution for mKdV equation with decaying initial value \cite{DZ-1993-AM}. Since then, more and more scholars have paid attention to the nonlinear descent method to study the long-time asymptotic behavior of the solution for the initial value problem of integrable systems, and many equations have been studied by this method \cite{XD-1994-UT,KG-2009-MPAG,AD-2013-N,PJ-1999-CPDE,LJ-2018-JDE,JE-2015-JDE,XFC-2013-MPAG,AA-2009-JMA}.
Later, McLaughlin and Miller extended the classical Deift-Zhou steepest descent method to Dbar steepest descent method, which was successfully used to study the asymptotic stability of NLS multiple soliton solutions \cite{MM-2006-IMRN,MM-2008-IMRN,DM-2019-FIC}, and the long-time proximity of KdV equation and NLS equation \cite{SR-2016-CMP,MR-2018-AN,RJ-2018-CMP,LP-2018-AI,P-2017-N}. TheDbar steepest descent method is to transform the discontinuous part on the jump line into the form of Dbar problem, rather than the analysis of the asymptotic properties of orthogonal polynomials in singular integrals on the jump line.
Then it became a more powerful tool in the study of long-time asymptotic for integrable equations, such as NLS equation \cite{DM-2019-N}, the Hirota equation \cite{YTL-2021-ar}, the Fokas-Lenells equation \cite{CF-2022-JDE}, the modified Camassa-Holm equation \cite{YF-2022-AM}, the short-pulse equation \cite{YF-2021-JDE} and so on.

The coupled dispersion AB system \cite{JD-1979-PRSL}
\begin{equation}\label{AB}
\begin{array}{l}
A_{x t}-\alpha A-\beta A B=0, \\
B_{x}+\frac{\gamma}{2}\left(|A|^{2}\right)_{t}=0,
\end{array}
\end{equation}
 describes the evolution of micro stable or unstable wave packets in baroclinic shear flow through a quasi geostrophic two-layer model on a beta-plane, where $A\equiv A(x,t)$ is the complex function representing the amplitude of the data packet, the real function $B\equiv B(x,t)$ is the measure of the average flow change caused by the baroclinic wave packet, and $\alpha$ represents the critical situation of shear and the real parameter $\beta,\gamma$ represent the nonlinear coefficient. And system (\ref{AB}) meets the following compatibility condition:
 $$
\frac{\gamma}{\beta}\left|A_{t}\right|^{2}+B^{2}+\frac{2 \alpha}{\beta} B=f(t),
$$
where $f(t)$ is an integral function of time.
 If $A(x, t)=1 / \sqrt{\beta \gamma} \psi_{x}(x, t)$ and $B(x,t)=\pm[\cos (\psi(x, t))-\alpha] / \beta \text { with } \beta \gamma>0$,  system (\ref{AB}) can reduce to $\psi_{x t}=\pm \sin \psi.$
If  $A(x, t)=1 / \sqrt{-\beta \gamma} \psi_{x}(x, t)$ and $B(x,t)=\pm[\cos (\psi(x, t))-\alpha] / \beta \text { with } \beta \gamma<0$,  system (\ref{AB}) can reduce to $\psi_{x t}=\pm \sinh \psi.$

 In recent years, many studies have been done for system (\ref{AB}). For example,  the soliton solutions of system (\ref{AB}) are obtained by the inverse scattering method \cite{JD-1979-PS}.  Kamchatnov and Pavlov found the periodic wave solution of system (\ref{AB})\cite{AM-1995-JPA}. Guo et al. have studied its classical Darboux transformation(DT) and N-flod DT and obtained breather solutions and multi-soliton solutions \cite{GR-2013-ND,GL-2015-AMC}. The rogue wave solution of system (\ref{AB}) was also obtained by generalized DT under the condition $\alpha=0, \beta=\gamma=1,f(t)=1$\cite{WX-2015-CN}.  In addition, the high-order strange wave and modulation instability of system (\ref{AB}) were further given via generalized DT \cite{WY-2015-CHAOS}. Recently, Yan et al. studied the long-time asymptotic behavior of solutions whose initial values belong to Schwarz space through the classical Deift-Zhou descent method \cite{SZ-2021-JMAA}.

 In this paper, we mainly study the long-time asymptotic behavior of the solution of system (1) satisfying the following initial value problem
 $$
A{(x,0)}=A_0(x)\in H^{1,1}(\mathbb{R}),
B{(x,0)}=B_0(x)\in H^{1,1}(\mathbb{R}),
$$
where $H^{1,1}(\mathbb{R})$ is the weighted Sobelev space, its expression is
$$
H^{1,1}(\mathbb{R})=\left\{f(x) \in L^{2}(\mathbb{R}): f^{\prime}(x), x f(x) \in L^{2}(\mathbb{R})\right\}.
$$
The main tool used here is the Dbar steepest descent method. Compared with Deift-Zhou steepest descent method, Dbar method avoids estimating Cauchy integral operator in $L^P$ space and rewrites the discontinuous part of RHP into Dbar problem which can be solved by integral equation. Compared with previous studies \cite{SZ-2021-JMAA}, we consider the initial value in a wider space instead of in Schwarz space and allow the existence of soliton solutions.

The context of this paper is as follows:

In section 2, we analyze the spectrum of the system (\ref{AB}) based on Lax pair. The analyticity, symmetry and asymptotic behavior of characteristic function and scattering matrix are studied. In terms of asymptotic behavior, two singular points $k=0$ and $k=\infty$ need to be considered. We also prove that when the initial value belongs to weighted Sobelev space, $r(k)$ belongs to $H^{1,1}(\mathbb{R})$.

In section 3, the RHP of system (\ref{AB}) is constructed by the piecewise smooth property of matrix $M(x,t)$.

In section 4, we prove the existence and uniqueness of the solution of RHP by using vanishing lemma.

In section 5,  by introducing $T(k)$ function, we change $M(x,k)$ into $M^{(1)}(x,k)$ and get a new RHP about $M^{(1)}(x,k)$. This is mainly to allow the jump matrix near the phase points to have two triangular decomposition.

In section 6, the contour is deformed near the phase points, and a mixed $\bar{\partial}$-RHP is obtained by defining $\mathcal{R}^{(2)}$.

In section 7, we decompose the mixed $\bar{\partial}$-RHP into a model RHP of $M_{rhp}(k)$ and the pure $\bar{\partial}$ problem of $ M^{(3)}(k)$.

In section 8, the RHP  about $M_{rhp}(k)$ is studied and the long-time behavior of the soliton solution is analyzed. It is obtained that the soliton solution can be represented by a ring region, and the applicability near the phase points is also explained. Secondly, the error function is calculated with RHP of a small norm.

In section 9, the pure $\bar{\partial}$ problem for $M^{(3)}(k)$ is analyzed.

In section 10, using the above deformation and results, Eq.(\ref{mt}) is obtained, and then the long-time behavior of the solution of the coupled dispersion AB system (\ref{AB}) in the case of weighted Sobelev initial value is obtained, which is given in the form of theorem.

\section{The spectral analysis}
System (\ref{AB}) has Lax pair is \cite{JD-1981-PRSLA}
\begin{equation}
\begin{array}{l}
\phi_{x}=\hat{U} \phi=\left(\begin{array}{cc}
-i k & \frac{\sqrt{\beta \gamma}}{2}A \\
-\frac{\sqrt{\beta \gamma}}{2} {A}^{*} & i k
\end{array}\right) \phi, \\
 \phi_{t}=\hat{V} \phi=\left(\begin{array}{cc}
\frac{i(\alpha+\beta B)}{4 k} & -\frac{i \sqrt{\beta \gamma}}{4 k} A_{t} \\
-\frac{i \sqrt{\beta \gamma}}{4 k}{A}^{*}_{t} & -\frac{i(\alpha+\beta B)}{4 k}
\end{array}\right) \phi.
\end{array}
\end{equation}
In order to better facilitate the presentation of the results, we mainly consider the case of $\beta\gamma=1$. At this time, the Lax pair of Eq.(\ref{AB}) can be simplified to
\begin{equation}
\Phi_{x}=X\Phi, ~~~\Phi_{t}=T\Phi,\label{lax}
\end{equation}
where
\begin{equation}
\begin{array}{l}
X=-ik\sigma_3+U,~~~~
T= \frac{i\alpha}{4k} \sigma_{3}+T_1,
\end{array}
\end{equation}
with
\begin{equation}
\begin{array}{l}
\sigma_{3}=\left[\begin{array}{cc}
1 & 0 \\
0 & -1
\end{array}\right], \quad U(x,t)=\frac{1}{2}\left(\begin{array}{cc}
0 & A \\
-A^{*} & 0
\end{array}\right), \quad T_1(x, t, k)=\frac{i}{4 k}\left(\begin{array}{cc}
\beta B & -A_{t} \\
-{A}^{*}_{t} & - \beta B
\end{array}\right).
\end{array}
\end{equation}
The spectral parameter $k\in \mathbb{C}$, under the rapidly decaying initial condition
$$
 A_{0}(x),B_0(x) \rightarrow 0 \quad \text {as} \quad x \rightarrow \infty,
$$
make the following correction transformation
\begin{equation}
\Phi=\Psi e^{-ikx\sigma_3+ \frac{i\alpha}{4k}t\sigma_3},\label{pp}
\end{equation}
the modified Lax pair can be obtained
\begin{equation}
\begin{aligned}\label{ut}
\Psi_{x}+i k\left[\sigma_{3}, \Psi\right] &=U \Psi, \\
\Psi_{t}-\frac{i\alpha}{4k}\left[\sigma_{3}, \Psi\right] &=T_1 \Psi,
\end{aligned}
\end{equation}
which can be written in full derivative form
$$
d(e^{i\left(k x-\frac{\alpha}{4k} t\right)\widehat{\sigma}_{3}} \Psi(x, t, k))=e^{i\left(k x-\frac{\alpha}{4k} t\right)\widehat{\sigma}_{3}}[Udx+T_1dt)\Psi].
$$
From the above Lax pair (\ref{ut}), we can see that it has two singular points $k = 0$ and $k = \infty$. Therefore, we need to investigate the different expansion forms of the characteristic function at two singular points. After simple calculation, the following form is obtained
$$
\begin{aligned}
&\Psi(x, t, k)=I+\frac{\Psi^1}{k}+\mathcal{O}\left(k^{-2}\right), \quad k \rightarrow \infty, \\
&\Psi(x, t, k)=\Psi^{0}+\Psi^{1}k+\mathcal{O}\left(k^2\right), \quad k \rightarrow 0,
\end{aligned}
$$
where
\begin{equation}\label{po}
\Psi^{0}(x,t)=\left(\begin{array}{cc}
e^{\frac{1}{2}\int_{-\infty}^{x}\frac{\beta A B}{A_t}dx} & e^{\frac{1}{2}\int_{x}^{\infty}\frac{ AA^{*}_t }{\beta B }dx} \\
 e^{\frac{1}{2}\int_{x}^{\infty}\frac{ A^{*}A_t }{\beta B }dx} & e^{\frac{1}{2}\int_{-\infty}^{x}\frac{\beta A^{*} B}{A^{*}_t}dx}
\end{array}\right),
\Psi^1_{12}=-\frac{i}{4}A, \Psi^1_{21}=-\frac{i}{4}A^{*}.
\end{equation}
The asymptotic solution $\Psi(x,t,k)$ satisfies
$$
\Psi_{\pm}(x, t, k) \rightarrow I, \quad x \rightarrow \pm \infty,
$$
the modified solution satisfies the following integral equation
\begin{equation}
\Psi_{\pm}(x, t, k)=I+\int_{\pm \infty}^{x} \mathrm{e}^{-i k(x-y) \sigma_{3}} U(y, t) \Psi_{\pm}(y, t, k) \mathrm{e}^{i k(x-y) \sigma_{3}} \mathrm{~d} y.\label{jost}
\end{equation}

Dividing $\Psi_{\pm}$ into columns as $\Psi_{\pm}=\left(\Psi_{\pm}^{(1)}, \Psi_{\pm}^{(2)}\right)$, due to the structure of the potential $U$, and Volterra integral equation (\ref{jost}), we have
\begin{proposition}
For $A(x),B(x) \in H^{1,1}(\mathbb{R})$ and $t \in \mathbb{R}^{+}$, there exist unique eigenfunctions $\Psi_{\pm}$ which satisfy Eq.(\ref{jost}), respectively, and have the following properties:

$\bullet$  $\text{det} \Psi_{\pm}(x,t,k)=1,  x, t \in \mathbb{R}$;

$\bullet$ $\left[\Psi_{-}\right]_{1},\left[\Psi_{+}\right]_{2}$ are analytic in $\mathbb{C}^{+}$ and continuous in $\mathbb{C}^{+}\cup \mathbb{R}$;

 $\bullet$ $\left[\Psi_{+}\right]_{1},\left[\Psi_{-}\right]_{2}$ are analytic in $\mathbb{C}^{-}$, and continuous in $\mathbb{C}^{-}\cup \mathbb{R}$.

\end{proposition}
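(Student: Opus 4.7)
The plan is to establish the determinant claim by Abel's identity and the analyticity/continuity claims by the classical Volterra-iteration argument applied to (\ref{jost}), with the sign of the exponential factor $e^{\pm 2ik(x-y)}$ selecting the half-plane of analyticity.

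For $\det\Psi_\pm\equiv 1$, I would return to the unmodified eigenfunctions $\Phi_\pm=\Psi_\pm e^{-ikx\sigma_3+\frac{i\alpha}{4k}t\sigma_3}$, which satisfy $\Phi_x=X\Phi$ with $X=-ik\sigma_3+U$. Since $\operatorname{tr}X=0$, Abel's (Jacobi's) identity gives $\partial_x\det\Phi_\pm=0$; the prefactor has unit determinant, so $\det\Psi_\pm=\det\Phi_\pm$ is $x$-independent, and the normalisation $\Psi_\pm\to I$ as $x\to\pm\infty$ pins the constant to $1$. For analyticity, I would decompose (\ref{jost}) into its scalar components. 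For the first column $[\Psi_\pm]_1=(\mu_1,\mu_2)^T$ the conjugation by $e^{-ik(x-y)\sigma_3}$ together with the off-diagonal structure of $U$ reduces the system to
\begin{align*}
\mu_1(x,k) &= 1 + \tfrac{1}{2}\int_{\pm\infty}^x A(y,t)\,\mu_2(y,k)\,dy,\\
\mu_2(x,k) &= -\tfrac{1}{2}\int_{\pm\infty}^x e^{2ik(x-y)}\,A^*(y,t)\,\mu_1(y,k)\,dy,
\end{align*}
while the second column produces the conjugate exponential $e^{-2ik(x-y)}$. For $[\Psi_-]_1$ the range of integration forces $x-y\ge 0$, so $|e^{2ik(x-y)}|\le 1$ on $\overline{\mathbb{C}^+}$; the analogous sign flips for the other three columns yield exactly the half-plane memberships stated in the proposition.

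What remains is to control the Neumann series generated by substituting one equation into the other. Since $H^{1,1}(\mathbb{R})\hookrightarrow L^1(\mathbb{R})$ (by Cauchy--Schwarz against the weight $(1+|x|)^{-1}$), we have $A(\cdot,t)\in L^1$, and the standard Volterra estimate majorises the $n$-th iterate by $\|A/2\|_{L^1}^{2n}/(2n)!$, uniformly in $k$ on the closed half-plane where the exponential is bounded. Uniform convergence of this series simultaneously gives existence and uniqueness of $\Psi_\pm$, transfers termwise analyticity to the sum (Morera/Weierstrass), and propagates continuity in $k$ up to the real line. The only non-mechanical aspect is the sign bookkeeping that separates the two half-planes; once this is laid out carefully the conclusion is routine.
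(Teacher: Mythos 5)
Your proposal is correct and is exactly the standard argument the paper relies on: the paper states this proposition without proof, but its Appendix A sets up the same scalar reduction $\psi_{11}^{\pm}=1+\int\frac{1}{2}A\,\psi_{21}^{\pm}\,dy$, $\psi_{21}^{\pm}=-\int\frac{1}{2}A^{*}\psi_{11}^{\pm}e^{2ik(x-y)}\,dy$ and the same iterated Volterra bound $\frac{1}{n!}\bigl(\int\frac{1}{2}|A|\bigr)^{n}$ that you use. Your sign bookkeeping for the half-planes and the Abel/Jacobi identity for the determinant are both right, so nothing is missing.
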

 The partition of $\mathbb{C}^{\pm}$ is shown in Fig.\ref{1t}.
\begin{figure}
\center
\begin{tikzpicture}\usetikzlibrary{arrows}
\coordinate [label=0: Im $k$] ()at (0,2.8);
\coordinate [label=0: Re $k$] ()at (3,-0.2);
\coordinate [label=0:] ()at (2,0.1);
\coordinate [label=0:] ()at (-2.6,0.1);
\path [fill=pink] (-3.5,2.5)--(3.5,2.5) to
(3.5,0)--(-3.5,0);
\coordinate [label=0: $\mathbb{C}^{+}$] ()at (1,1);
\coordinate [label=0: $\mathbb{C}^{-}$] ()at (1,-1);
\draw[->, ] (3,0)--(3.5,0);
\draw[->,] (0,2)--(0,2.5);
\draw[thin](0,2)--(0,-2.3);
\draw[thin](3.5,0)--(-3.5,0);
\end{tikzpicture}
\caption{\small Definition of the $\mathbb{C}^{+}=\{k \mid \operatorname{Im} k>0\},~\mathbb{C}^{-}=\{k \mid \operatorname{Im} k<0\}$ }\label{1t}
\end{figure}
It can be seen from the equation that $\Psi_{+}e^{-ikx\sigma_3+ \frac{i\alpha}{4k}t\sigma_3}$ and $\Psi_{-} e^{-ikx\sigma_3+ \frac{i\alpha}{4k}t\sigma_3}$ are the solutions of Lax equation (\ref{jost}), so they are linearly related
\begin{equation}
\Psi_{-}e^{-ikx\sigma_3+ \frac{i\alpha}{4k}t\sigma_3} =\Psi_{+} e^{-ikx\sigma_3+ \frac{i\alpha}{4k}t\sigma_3}S(t,k), \quad k \in \mathbb{R},\label{ss}
\end{equation}
where
$$
S(t,k)=\left[\begin{array}{rr}
s_{11} & s_{12} \\
s_{21} & s_{22}
\end{array}\right].
$$
\begin{proposition}
$\Psi(x,t,k)$ and $S(t,k)$ satisfy the symmetry relation
\begin{equation}
\Psi(x,t,k)=\sigma_{0} \Psi^{*}\left(x,t,k^{*}\right)\sigma_{0}^{-1},~~~S(t,k)=\sigma_{0} S^{*}\left(t,k^{*}\right)\sigma_{0}^{-1},
\end{equation}
where
$$
\sigma_{0}=\left[\begin{array}{rr}
0 & 1 \\
-1 & 0
\end{array}\right].
$$
\end{proposition}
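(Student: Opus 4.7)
The plan is to prove the symmetry for $\Psi$ first by a uniqueness argument applied to the Volterra integral equation (\ref{jost}), then deduce the corresponding relation for $S$ from the scattering relation (\ref{ss}). The key preliminary observation is that the coefficient matrices in the Lax pair (\ref{ut}) enjoy compatible symmetries under conjugation by $\sigma_0$: a one-line computation gives $\sigma_0\sigma_3\sigma_0^{-1}=-\sigma_3$, and from the explicit form of $U$ (whose $(1,2)$ and $(2,1)$ entries are $A/2$ and $-A^{*}/2$) one finds $\sigma_0 \overline{U}\,\sigma_0^{-1}=U$. The same off-diagonal swap-and-negate rule, combined with the reality of $B$ and the fact that conjugating $1/k^{*}$ returns $1/k$, yields $\sigma_0\,\overline{T_1(x,t,k^{*})}\,\sigma_0^{-1}=T_1(x,t,k)$.

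Next I would set $\widetilde\Psi(x,t,k):=\sigma_0\,\Psi^{*}(x,t,k^{*})\,\sigma_0^{-1}$. Writing the modified Lax pair (\ref{ut}) for $\Psi(x,t,k^{*})$, taking entry-wise complex conjugate (so that $-ik^{*}\mapsto ik$), and then conjugating by $\sigma_0$, the three identities above convert the sign flip of $\sigma_3$ into the correct one for $\widetilde\Psi$: both equations for $\widetilde\Psi$ reduce back to (\ref{ut}). Since $\sigma_0 I\sigma_0^{-1}=I$, the normalization $\widetilde\Psi_{\pm}\to I$ as $x\to\pm\infty$ is preserved, so $\widetilde\Psi_{\pm}$ solves the same Volterra equation (\ref{jost}) as $\Psi_{\pm}$. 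Uniqueness of that solution (part of Proposition~2.1) then forces $\Psi_{\pm}(x,t,k)=\sigma_0\,\Psi_{\pm}^{*}(x,t,k^{*})\,\sigma_0^{-1}$.

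For the second identity I would substitute the just-proved symmetry for $\Psi_{\pm}$ into (\ref{ss}). Rewriting (\ref{ss}) at $k^{*}\in\mathbb R$, taking the complex conjugate entry-wise and conjugating by $\sigma_0$, the scalar factors $e^{\pm i(kx-\alpha t/(4k))\sigma_3}$ each flip the sign of their exponent under $\sigma_0\cdot\sigma_0^{-1}$ (once more by $\sigma_0\sigma_3\sigma_0^{-1}=-\sigma_3$), and these two sign flips cancel those coming from the conjugation $k\mapsto k^{*}$. The result is the scattering relation (\ref{ss}) again but with $S(t,k)$ replaced by $\sigma_0 S^{*}(t,k^{*})\sigma_0^{-1}$. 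Since $\Psi_{\pm}$ have full rank, the scattering matrix is uniquely determined by (\ref{ss}), and therefore $S(t,k)=\sigma_0 S^{*}(t,k^{*})\sigma_0^{-1}$ on $\mathbb R$, extending wherever both sides are defined.

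I do not anticipate a genuine obstacle here: the proof is essentially three short algebraic identities together with one uniqueness appeal. The one thing that needs care is bookkeeping — the substitution $k\leftrightarrow k^{*}$ must be paired consistently with entry-wise conjugation, and the sign flip $\sigma_0\sigma_3\sigma_0^{-1}=-\sigma_3$ must be tracked through both the differential part of the Lax pair and the exponential conjugation factors in (\ref{ss}).
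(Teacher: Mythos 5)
Your proof is correct and follows essentially the same route as the paper: verify that conjugation by $\sigma_{0}$ together with entrywise conjugation and $k\mapsto k^{*}$ preserves the Lax pair, then transfer the symmetry to $S$ through the scattering relation (\ref{ss}). The only (harmless) difference is that you work directly with the modified Lax pair (\ref{ut}) and pin down the solution via the Volterra/Jost normalization, whereas the paper first asserts the symmetry for $\Phi$ and then passes through the gauge transformation (\ref{pp}); your version is if anything the more careful one, since it makes explicit the uniqueness appeal that the paper leaves implicit.
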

\begin{proof}
The symmetry of $\Phi$ can be easily obtained from Lax equation
$$
\Phi(x,t,k)=\sigma_{0} \Phi^{*}\left(x,t,k^{*}\right)\sigma_{0}^{-1},
$$
then contact transformation (\ref{pp}), which is easy to verify
$$
\Psi(x,t,k)=\sigma_{0} \Psi^{*}\left(x,t,k^{*}\right)\sigma_{0}^{-1}.
$$
Then according to the relationship (\ref{ss}), there are
$$
S(t,k)=\sigma_{0} S^{*}\left(t,k^{*}\right)\sigma_{0}^{-1}.
$$
\end{proof}
Based on the symmetry condition of $S(k)$, we can get
$$
s_{11}(k)=s_{22}^{*}(k^{*}), s_{12}(k)=-s_{21}^{*}(k^{*}).
$$
$S(t,k)$ is called scattering matrix and $s_{i,j}(i,j=1,2)$ are usually called scattering data. Moreover, Eq.(\ref{ss}) leads to
\begin{equation}\label{s112}
s_{11}(k)=\operatorname{det}\left(\left[\Psi_{-}\right]_{1},\left[\Psi_{+}\right]_{2}\right), ~s_{12}(k)=e^{2ikx-\frac{i\alpha}{2k}t} \operatorname{det}\left(\left[\Psi_{-}\right]_{2},\left[\Psi_{+}\right]_{2}\right).
\end{equation}
On the basis of  the properties of $\Psi$ in Proposition 1, we can get
\begin{proposition}
The scattering matrix $S(t,k)$ and scattering data $s_{11}(k),s_{12}(k)$ satisfies:

$\bullet$  $\text{det} S(k)=1, \quad x, t \in \mathbb{R}$;

$\bullet$ $|s_{11}(k)|^{2}+|s_{12}(k)|^{2}=1$ for $k \in \mathbb{R}$;

$\bullet$  $s_{11}(k)$ is analytic in $\mathbb{C}^{+}$ and continuous in $\mathbb{C}^{+}\cup \mathbb{R} ;$

 $\bullet$ $s_{12}(k)$ is continuous on the real $k$;

$\bullet$  $s_{11}(k) =1+\mathcal{O}(k^{-1})$ as $k \rightarrow \infty; ~~s_{12}(k) = \mathcal{O}(k^{-1})$ as $k \rightarrow \infty;$

$\bullet$  $s_{11}(k) = \varphi_0+\mathcal{O}(k^2)$ as $k \rightarrow 0; ~~s_{12}(k) =\mathcal{ O}(k^2)$ as $k \rightarrow 0, $

 where $\varphi_0=e^{\frac{1}{2}\int_{-\infty}^{\infty}\frac{\beta A^{*}B}{A^{*}_t}dx}-e^{\frac{1}{2}\int_{-\infty}^{\infty}\frac{A^{*} A_t}{\beta B}dx}.$

\end{proposition}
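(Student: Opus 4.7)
The plan is to verify the six properties by combining the tools already in hand: the unimodularity and half-plane analyticity of the Jost columns $\Psi_\pm$ from Proposition 1, the scattering relation (\ref{ss}), the symmetry from Proposition 2, the Wronskian representations (\ref{s112}), and the expansions of $\Psi_\pm$ at $k = \infty$ and $k = 0$ stated just before (\ref{po}). A preliminary observation that I will use repeatedly is that $s_{11}(k)$ and $s_{12}(k)$ are $x$-independent: the Wronskian of any two columns of the original Jost matrices $\Phi_\pm$ is $x$-constant because the coefficient matrix $X = -ik\sigma_3 + U$ is traceless, and the gauge transformation (\ref{pp}) contributes cancelling phase factors.

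For the first two bullets I take determinants of both sides of (\ref{ss}). The diagonal exponential factor has unit determinant since $\sigma_3$ is traceless, and $\det \Psi_\pm = 1$ by Proposition 1, so $\det S(k) = 1$. Expanding $\det S = s_{11}s_{22} - s_{12}s_{21}$ and inserting the symmetries $s_{11} = s_{22}^{\ast}$, $s_{12} = -s_{21}^{\ast}$ obtained from Proposition 2 immediately gives $|s_{11}|^2 + |s_{12}|^2 = 1$ on $\mathbb{R}$.

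The analyticity bullets follow from (\ref{s112}): since $[\Psi_-]_1$ and $[\Psi_+]_2$ are analytic in $\mathbb{C}^+$ and continuous up to $\mathbb{R}$ (Proposition 1), so is their Wronskian $s_{11}$. The representation of $s_{12}$ pairs $[\Psi_-]_2$ (analytic only in $\mathbb{C}^-$) with $[\Psi_+]_2$ (analytic only in $\mathbb{C}^+$), so the natural domain is the common boundary $\mathbb{R}$, where continuity is inherited from the boundary traces of both columns. For the large-$k$ asymptotics I substitute $\Psi_\pm = I + \Psi^1_\pm / k + \mathcal{O}(k^{-2})$ into the two determinants in (\ref{s112}): the diagonal entries of both relevant columns start at $1$, producing $s_{11}(k) = 1 + \mathcal{O}(k^{-1})$, while the off-diagonal entries start at $\mathcal{O}(k^{-1})$; since the phase prefactor $e^{2ikx - i\alpha t/(2k)}$ in $s_{12}$ has unit modulus on $\mathbb{R}$, this yields $s_{12}(k) = \mathcal{O}(k^{-1})$.

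The delicate bullet is the small-$k$ asymptotic. The plan is to insert the expansion $\Psi_\pm = \Psi_\pm^0 + \Psi_\pm^{(1)} k + \mathcal{O}(k^2)$ together with the explicit form (\ref{po}) into (\ref{s112}), evaluate the Wronskians at a convenient value of $x$ using the $x$-independence above, and identify the leading term of $s_{11}$ as the asserted difference of exponentials in $\varphi_0$. For $s_{12}(k) = \mathcal{O}(k^2)$ one must further verify that the $\mathcal{O}(1)$ and $\mathcal{O}(k)$ terms of $\det([\Psi_-]_2,[\Psi_+]_2)$ both vanish: the constant term should cancel because $[\Psi_-^0]_2$ and $[\Psi_+^0]_2$ are linearly dependent (both lie in the kernel of the same limiting operator at $k = 0$), and the linear term should be killed by the same symmetry structure that produces the explicit form of $\varphi_0$. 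The main obstacle will be rigor at $k = 0$: since this is an irregular singular point of the $t$-part of the Lax pair, justifying the expansion (\ref{po}) in the $H^{1,1}$ setting and correctly identifying which column of the composite matrix $\Psi^0$ corresponds to $[\Psi_-^0]_1$ versus $[\Psi_+^0]_2$, so as to match the prescribed form of $\varphi_0$, is the bookkeeping most likely to cause trouble.
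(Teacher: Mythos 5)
Your approach is the standard one and coincides with what the paper (implicitly) intends: the paper states this proposition without any proof, merely asserting it follows from Proposition 1, so your write-up actually supplies more detail than the source. The first five bullets are handled correctly and completely: $\det S=1$ from taking determinants in (\ref{ss}) with $\det\Psi_{\pm}=1$; the relation $|s_{11}|^2+|s_{12}|^2=1$ from inserting $s_{22}=s_{11}^{*}$, $s_{21}=-s_{12}^{*}$ (valid for real $k$) into $\det S=1$; analyticity/continuity from the Wronskian formulas (\ref{s112}) and the half-plane analyticity of the Jost columns; and the large-$k$ asymptotics from the expansion $\Psi=I+\Psi^{1}/k+\mathcal{O}(k^{-2})$, noting that $\det(e_2+\mathcal{O}(k^{-1}),e_2+\mathcal{O}(k^{-1}))=\mathcal{O}(k^{-1})$ and that the phase prefactor is unimodular on $\mathbb{R}$. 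Your preliminary remark on $x$-independence of the Wronskians is also correct and is needed to make sense of evaluating them at $x\to\pm\infty$.

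The one genuine gap is the final bullet, which you present as a plan rather than a proof, and the obstacle you anticipate is real. Substituting the matrix $\Psi^{0}$ of (\ref{po}) into $s_{11}=\det([\Psi_-]_1,[\Psi_+]_2)$ and letting $x\to\pm\infty$ produces a difference of \emph{products} of exponentials (or a term equal to $1$), not the difference of two single exponentials asserted in $\varphi_0=e^{\frac{1}{2}\int_{-\infty}^{\infty}\frac{\beta A^{*}B}{A^{*}_t}dx}-e^{\frac{1}{2}\int_{-\infty}^{\infty}\frac{A^{*}A_t}{\beta B}dx}$; likewise the cancellation of the $\mathcal{O}(1)$ and $\mathcal{O}(k)$ terms in $\det([\Psi_-]_2,[\Psi_+]_2)$ needed for $s_{12}=\mathcal{O}(k^2)$ is not forced by linear dependence alone (note also that the prefactor $e^{2ikx-\frac{i\alpha}{2k}t}$ is merely unimodular and does not contribute extra powers of $k$). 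Completing this step requires a careful identification of which entries of $\Psi^{0}$ belong to $\Psi^{0}_-$ versus $\Psi^{0}_+$ and an additional algebraic relation among them; since the paper gives no derivation and its formula (\ref{po}) does not visibly produce the stated $\varphi_0$, this bullet remains unproven in your proposal exactly where the paper itself is silent.
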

The reflection coefficient is described below, which is usually defined as
\begin{equation}\label{rk}
r(k)=\frac{s_{12}(k)}{s_{11}(k)},
\end{equation}
using the first two properties in proposition 3, we have
$$
1+|r(k)|^{2}=\frac{1}{|s_{11}(k)|^{2}}.
$$
In order to deal with our future work, we assume that the initial data meets the following assumptions.
\begin{assume}
The initial data $A,B\in H^{1,1}(\mathbb{R})$ and it generates generic scattering data which satisfy that:

 $\bullet$ $s_{11}(k)$ has no zeros on $\mathbb{R}$;

$\bullet$ $s_{11}(k)$ only has finite number of simple zeros.
\end{assume}

Based on the above analysis, we can get the following theorem about $r(k)$.
\begin{theorem}
For any given initial value $A,B\in H^{1,1}(\mathbb{R})$, we have $r(k) \in H^{1,1}(\mathbb{R})$.
\end{theorem}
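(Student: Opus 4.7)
The plan is to reduce the statement to $H^{1,1}$ bounds on $s_{12}$ and on $1/s_{11}$, and then to derive these bounds from integral representations via the Volterra equation (\ref{jost}). Since by Assumption 1 $s_{11}$ has no zeros on $\mathbb{R}$, and by Proposition 3 $s_{11}(k) \to 1$ as $|k| \to \infty$, the function $1/s_{11}$ is bounded on $\mathbb{R}$. If in addition one shows that $s_{11}-1$ lies in a sufficiently regular algebra so that $1/s_{11}$ acts as a multiplier on $H^{1,1}(\mathbb{R})$, the claim $r \in H^{1,1}(\mathbb{R})$ reduces to $s_{12} \in H^{1,1}(\mathbb{R})$.

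The first key step is to obtain an integral representation for $s_{12}(k)$ suitable for Fourier analysis. Evaluating (\ref{s112}) at $t=0$ and letting $x \to +\infty$ in conjunction with the Volterra identity (\ref{jost}) for the column $[\Psi_-]_2$, we arrive at
\begin{equation*}
s_{12}(k) \;=\; \tfrac{1}{2}\int_{\mathbb{R}} e^{2iky}\, A_0(y)\, [\Psi_-]_2^{(2)}(y,k)\, dy,
\end{equation*}
and analogously $s_{11}(k)-1$ is expressed as a non-oscillatory integral of $A_0^\ast$ against $[\Psi_-]_1^{(1)}$. Splitting $[\Psi_-]_2^{(2)}(y,k) = 1 + \eta(y,k)$, the free term contributes $\tfrac{1}{2}\widehat{A_0}(-2k)$, where $\widehat{\cdot}$ denotes the Fourier transform. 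By Plancherel and the standard characterization of $H^{1,1}$, $A_0 \in H^{1,1}(\mathbb{R})$ yields $\widehat{A_0}(-2\cdot) \in H^{1,1}(\mathbb{R})$; hence the main term is acceptable and the burden falls on the remainder $R(k) = \tfrac{1}{2}\int e^{2iky}A_0(y)\eta(y,k)\,dy$.

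The remainder is handled by iterating the Volterra equation for $[\Psi_-]_2$ to generate a Neumann series $\eta = \sum_{n\ge 1}\eta_n$, each $\eta_n$ an $n$-fold iterated oscillatory integral in $A_0$. The $L^2$ bound on $R$ follows from Plancherel applied termwise; the bound $kR(k) \in L^2$ comes from integration by parts in $y$, exploiting $A_0' \in L^2$; and the bound $R'(k) \in L^2$ comes from differentiating in $k$ under the integral, which produces a factor of $y$ inside the integrand and demands $yA_0(y) \in L^2$. The iterates satisfy a geometric estimate, so the series converges in $H^{1,1}(\mathbb{R})$ and $R$ inherits this regularity. An identical argument applied to $[\Psi_-]_1^{(1)}$ shows $s_{11}-1 \in H^{1,1}(\mathbb{R})$.

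The main obstacle lies in obtaining uniform-in-$k$ estimates for $\partial_k[\Psi_-]_2^{(2)}(y,k)$: differentiating each Neumann iterate in $k$ introduces factors $y_1,\dots,y_n$ inside the iterated integral, and these are exactly what the weight $yA_0(y)\in L^2$ from $H^{1,1}(\mathbb{R})$ is designed to absorb. A secondary subtlety is the behavior near $k=0$, where Proposition 3 gives $s_{11}(k) = \varphi_0 + \mathcal{O}(k^2)$ and $s_{12}(k) = \mathcal{O}(k^2)$; genericity (as part of Assumption 1) must be read so that $\varphi_0 \neq 0$, ensuring $r(k) = \mathcal{O}(k^2)$ is regular at the origin. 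Once these points are addressed, $r = s_{12}/s_{11} \in H^{1,1}(\mathbb{R})$ follows by combining the multiplicative stability of $H^{1,1}$ under bounded, non-vanishing factors with the estimates above.
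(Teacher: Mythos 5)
Your proposal follows essentially the same route as the paper: the paper's Appendix A likewise iterates the Volterra equation (\ref{jost}) into a Neumann series, bounds each iterate by $\|A\|_{L^1}$ and its $k$-derivative by the weighted norm $\|A\|_{L^{1,1}}$, transfers these estimates to $s_{11}-1$, $s_{12}$ and their $k$-derivatives via the determinant formulas (\ref{s112}), and then divides by the non-vanishing $s_{11}$. One point in your favor: your integration-by-parts argument for $k\,r(k)\in L^{2}$ (using $A_0'\in L^{2}$) addresses the weight condition in the definition of $H^{1,1}$, a step the paper's own proof passes over silently, as does your explicit remark that genericity must force $\varphi_0\neq 0$ so that $r$ is regular at the singular point $k=0$.
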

\begin{proof}
According to the definition of $r(k)$ we have and the asymptotic behavior of $s_{11}$ and $s_{21}$ at singular points in Proposition 3, we can obtain
$$
r(k)=\mathcal{O}(k^{-1}),k\rightarrow \infty;~~~~~r(k)=\mathcal{O}(k^{2}),k\rightarrow 0.
$$
From the proposition estimation in Appendix A that
$$
s_{11}(k), s_{12}(k) \in L^{2}(\mathbb{R}),
$$
which lead to
$$
r(k) \in L^{2}(\mathbb{R}).
$$
In addition, it can also be obtained $s_{11}^{\prime}(k), s_{12}^{\prime}(k) \in L^{2}(\mathbb{R})$. Therefore
$$
r^{\prime}(k)=\frac{s_{12}^{\prime}(k) s_{11}(k)-s_{11}^{\prime}(k) s_{12}(k)}{s_{11}^{2}(k)} \in L^{2}(\mathbb{R}).
$$
Finally come to the map $A_0,B_0\rightarrow r(k)$ is Lipschitz continuous from $H^{1,1}(\mathbb{R})$ into $H^{1,1}$.
\end{proof}

\section{The construction of a RHP}
Suppose $s_{11}(k)$ has N simple zeros $\mathcal{K}=\left\{k_{j}\right\}_{j=1}^ {N} \subset \mathbb{C}^{+}$, reviewing the symmetry of $S$, we can see that there are also $N$ simple zeros $\mathcal{K}^{*}=\{k_{j}^{*}\}_{j=1}^ {N} \subset \mathbb{C}^{-}$. Then define a meromorphic function $M(x,t,k)$ as

\begin{equation}
M(x, t, k)=\left\{\begin{array}{ll}
\left(\frac{[\Psi_{-}]_{1}}{s_{11}(k)}, [\Psi_{+}]_{2}\right), & k \in \mathbb{C}^{+}, \\
\left([\Psi_{+}]_{1}, \frac{[\Psi_{-}]_{2}}{s_{22}(k)}\right), & k \in \mathbb{C}^{-},
\end{array}\right.
\end{equation}
which solves the following RHP.

\begin{rhp}\label{r1}
Find a matrix-valued function $M(x,t,k)$ which satisfies:

$\bullet$  Analyticity: $M(x,t,k)$ is meromorphic in $\mathbb{C}\backslash\mathbb{R}$ and has single poles;

$\bullet$   Symmetry: $M(x,t,k)=\sigma_{0} M^{*}\left(x,t,k^{*}\right)\sigma_{0}^{-1};$

$\bullet$ Jump condition: $M(x,t,k)$ has continuous boundary values $M_{\pm}(x,t,k)$ on $\mathbb{R}$ and
\begin{equation}
M_{+}(x, t, k)=M_{-}(x, t, k) V(k), \quad k \in \mathbb{R},
\end{equation}
where
\begin{equation}
V(k)=\left(\begin{array}{cc}
1+|r(k)|^2 & r^{*}(k^{*}) e^{-2 i t \theta(k)} \\
r(k) e^{2 i t \theta(k)} & 1
\end{array}\right);\label{vv}
\end{equation}

$\bullet$   Asymptotic behaviors:
$$
M(x, t, k)=I+\mathcal{O}\left(k^{-1}\right), \quad k \rightarrow \infty;
$$

$\bullet$  Residue conditions: $M$ has simple poles at each point in $k_j\in \mathbb{C}^{+}$ and $k_j^{*}\in \mathbb{C}^{-}$ with:
\begin{equation}
\begin{array}{l}
\operatorname{Res}_{k=k_{j}} M(z)=\lim _{k \rightarrow k_{j}} M(k)\left(\begin{array}{cc}
0 & 0 \\
c_{j} e^{2 i t \theta\left(k_{j}\right)} & 0
\end{array}\right), \\
\operatorname{Res}_{k={k}_{j}^{*}} M(k)=\lim _{k \rightarrow {k}_{j}^{*}} M(k)\left(\begin{array}{cc}
0 & -{c}_{j}^{*} e^{-2 i t \theta\left(k_j^{*}\right)} \\
0 & 0
\end{array}\right),
\end{array}
\end{equation}
where $\theta=k\frac{x}{t}-\frac{\alpha}{4k},c_{j}=\frac{s_{12}(k_j)}{s_{11}'(k_j)}$.
\end{rhp}
\underline{}The solution of the coupled dispersion AB system (\ref{AB}) can be expressed by
\begin{equation}
A= 4 i\lim _{k \rightarrow \infty} (k M)_{12}(x, t, k),~~B=-\frac{4 i}{\beta} \lim _{k \rightarrow \infty} \frac{d}{d t}(k M)_{11}.
\end{equation}

\section{Existence and uniqueness of solution of RH problem}

In this chapter, we will mainly focus on the existence and uniqueness of the solution of the RH problem constructed above. Our idea here is to use the vanishing lemma to show that the integral equation has only zero solution to the homogeneous equation, and then show that the equation has a unique solution.

In order to facilitate the application of the subsequent lemma, we focus all the residue conditions in RHP \ref{r1} on the circle $\{ \kappa_j,\kappa^{*}_j\}$  centered on the pole $\{k_j\in \mathbb{C}^{+},k^{*}_j\in\mathbb{C}^{-}\}$. As long as these circles are small enough, they will not intersect. See Fig. \ref{2t} for details.

\begin{figure}[htpb]
{\includegraphics[width=6.9cm,height=5.5cm]{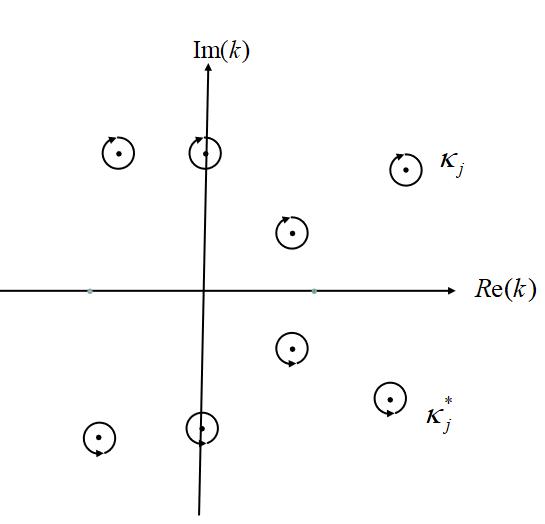}}
\centering
\caption{\small  Distribution of  $\Sigma^{(1)}$.}\label{2t}
\end{figure}
Let
$$
\Sigma^{(1)}=\mathbb{R}\cup \left\{\kappa_{j}\right\}_{j=1}^{ N} \cup\left\{\kappa^{*}_{j}\right\}_{j=1}^{ N},
$$
and rewrite RHP \ref{r1} as follows:
\begin{rhp}\label{r111}
Find a matrix-valued function $M(x,t,k)$ which satisfies:

$\bullet$  Analyticity: $M(x,t,k)$ is analytic in $\mathbb{C}\backslash\Sigma^{(1)}$;

$\bullet$   Symmetry: $M(x,t,k)=\sigma_{0} M^{*}\left(x,t,k^{*}\right)\sigma_{0}^{-1};$

$\bullet$ Jump condition: $M(x,t,k)$ has continuous boundary values $M_{\pm}(x,t,k)$ on $\Sigma^{(1)}$ and
\begin{equation}\label{v1}
M_{+}(x, t, k)=M_{-}(x, t, k) V^{\prime}(k), \quad k \in \mathbb{R},
\end{equation}
where
\begin{equation}
V^{\prime}(k)= \begin{cases}\left(\begin{array}{ll}
1+|r(k)|^{2} & r^{*}(k^{*}) e^{-2 i k x} \\
r(k) e^{2 i k x} & 1
\end{array}\right), & k \in \mathbb{R}, \\
\left(\begin{array}{cc}
1 & 0 \\
\frac{c_{j} e^{2 i k x}}{k-k_{j}} & 1
\end{array}\right), & k \in \kappa_{j}, \\
\left(\begin{array}{cc}
1 & \frac{{c}^{*}_{j} e^{-2 i k x}}{k-{k}^{*}_{j}} \\
0 & 1
\end{array}\right), & k \in \kappa^{*}_{j};\end{cases}
\end{equation}

$\bullet$  Normalization:

$$
\begin{aligned}
&(a)~~M(k,x)=I+\mathcal{O}\left(k^{-1}\right),\quad  k \rightarrow \infty,\\
&(b)~~ M(k,x)=\mathcal{O}\left(k^{-1}\right), \quad  k \rightarrow \infty.\\
\end{aligned}
$$
\end{rhp}
Here, we deliberately turn $t$ into a gray variable, mainly to better show the importance of $x$ in the proof process. In normalization (a) is mainly to reconstruct the potential $A,B$. The role of (b) is to illustrate the existence and uniqueness of solutions to the RHP of type (a).

As we all know, $V^{\prime}(k)$ has the following decomposition
$$
V^{\prime}(k)=\left(I-\Lambda^{-}\right)^{-1}\left(I+\Lambda^{+}\right).
$$
Bring it into (\ref{v1}) and define it as $\omega$
\begin{equation}\label{w1}
\omega=M_{+}\left(I+\Lambda^{+}\right)^{-1}=M_{-}\left(I-\Lambda^{-}\right)^{-1},
\end{equation}
the above formula can be rewritten as
$$
M_{+}-M_{-}=\omega(\Lambda^{+}+\Lambda^{-}).
$$
Using Plemelj formula, it is easy to give the solution of solvable RHP(\ref{r111}) equation with regularized boundary (a) in the form of
\begin{equation}
M(x, k)=I+\frac{1}{2\pi i} \int_{\Sigma^{(1)}} \frac{\omega(\Lambda^{+}+\Lambda^{-})}{s-k} d s,
\end{equation}
the solution of the equation with boundary condition (b),
\begin{equation}\label{mb}
M(x, k)=\frac{1}{2\pi i} \int_{\Sigma^{(1)}} \frac{\omega(\Lambda^{+}+\Lambda^{-})}{s-k} d s.
\end{equation}
The following main task is to prove that the solution with boundary condition (b) tends to 0, that is, Eq.(\ref{mb}) tends to 0, which is also obvious.
Write $M(x,k)$ expansion as
\begin{equation}\label{mr}
M(x, k)=\frac{1}{2\pi i} \left(\int_{\mathbb{R}} \frac{\omega(\Lambda^{+}+\Lambda^{-})}{s-k} d s+\int_{\Sigma^{(1)}\backslash \mathbb{R}} \frac{\omega(\Lambda^{+}+\Lambda^{-})}{s-k}d s\right).
\end{equation}

Here we define
$$
\mathcal{N}(k)=M(k) M({k}^{*})^{H},
$$
where $'H'$ represents conjugate transpose of the matrix, and we only need to prove
\begin{equation}\label{n1}
\int_{\mathbb{R}} \mathcal{N}_{\pm}(k) d k=0
\end{equation}
below. Obviously, according to Schwartz's reflection principle, $\mathcal{N}$ is analytical in $\mathbb{C}\backslash \Sigma^{(1)}$.

For $k\in \mathbb{R}$, we have
$$
\mathcal{N}_{+}(k)=M_{+}(k)M_{-}(k)^{H}=M_{-}(k) V^{\prime}(k) M_{-}(k)^{H},
$$
$$
\mathcal{N}_{-}(k)=M_{-}(k) M_{+}(k)^{H}=M_{-}(k) V^{\prime}(k)^{H} M_{-}(k)^{H}.
$$
 Therefore, if Eq.(\ref{n1}) is satisfied, there is
$$
\int_{\mathbb{R}} M_{-}(k)\left(V^{\prime}(k)+V^{\prime}(k)^{H}\right) M_{-}(k)^{H}=0,
$$
since $k\in \mathbb{R}$, so $V^{\prime}(k)=V^{\prime}(k)^{H}$. Then we know that $V^{\prime}(k)+V^{\prime}(k)^{H}$ is also an Hermite and is positive definite. Therefore, if the above formula holds, there must be $M_{-}(k)=0$ on $\mathbb{R}$. So
$$
M_{+}(k)=M_{-}(k) V^{\prime}(k)=0, ~~~~~k \in \mathbb{R}.
$$
According to Morera's theorem, we know that $M(k)$ is analytic in the neighborhood of each point on $\mathbb{R}$. And $M(k)$ tends to disappear.

For $k\in \Sigma^{(1)}\backslash \mathbb{R}$. then we have
$$
\begin{aligned}
\mathcal{N}_{+}(k)&=M_{+}(k) M_{-}(k^{*})^{H}=M_{-}(k) V^{\prime}(k)\left(\mathcal{N}_{-}(k^{*})^{-1}\right)^{H} M_{+}(k^{*})^{H} \\
&=M_{-}(k) M_{+}(k^{*})^{H}=\mathcal{N}_{-}(k).
\end{aligned}
$$
Similarly, it can be seen from the Morera's theorem that $\mathcal{N}(k)$ is analytic in $\Sigma^{(1)}\backslash \mathbb{R}$. And $M_{\pm}(x, \cdot) \in L^{2}(\mathbb{R})$, we know that $\mathcal{N}(k)$ is integrable. According to the integral form of Eq.(\ref{mr}),
$$
\begin{aligned}
M(x, k)&=\frac{1}{2\pi i} \int_{\Sigma^{(1)}\backslash \mathbb{R}} \frac{\omega(\Lambda^{+}+\Lambda^{-})}{s-k}d s\\
&=\frac{1}{2 k \pi i} \int_{\Sigma^{(1)}\backslash \mathbb{R}} \frac{s}{s-k} \omega(x, s)\left(\Lambda^{+}+\Lambda^{-}\right) d s-\frac{1}{2 k \pi i} \int_{\Sigma^{(1)}\backslash \mathbb{R}}\omega(x, s)\left(\Lambda^{+}+\Lambda^{-}\right) d s,
\end{aligned}
$$
the Taylor expansion of $k$ shows that $M(x, k) \sim \mathcal{O}\left(k^{-1}\right), k \in \mathbb{C} \backslash \mathbb{R}$. And then we know that $\mathcal{N}(k) \sim \mathcal{O}\left(k^{-2}\right), k \in \mathbb{C} \backslash \mathbb{R}$. Then the Eq.(\ref{n1}) can be obtained by Cauchy's integral theorem and Jordon's theorem. We can apply the same argument $k\in \mathbb{R}$ to $k\in \Sigma^{(1)}\backslash \mathbb{R}$, and we can also get $M(x, k)=0$. Therefore, it is concluded that $M(x, k)=0$ is on the whole complex plane.

So using the vanishing lemma and Fredholm alternative theorem can get the following proposition.
\begin{proposition}
Assuming that the initial date $A,B \in H^{1,1}(\mathbb{R})$, then the RHP(\ref{r111}) with normalization boundary condition (a) has a unique solution $M(x, k)$.
\end{proposition}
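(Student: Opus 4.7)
The plan is to derive the proposition as a direct consequence of the vanishing argument carried out above combined with Fredholm theory for the singular integral equation associated with Riemann-Hilbert Problem \ref{r111}. Using the factorization $V' = (I-\Lambda^-)^{-1}(I+\Lambda^+)$ and the definition $\omega = M_+(I+\Lambda^+)^{-1} = M_-(I-\Lambda^-)^{-1}$, the RHP with normalization (a) is equivalent to the singular integral equation
$$
\omega(x,k) - \mathcal{C}_\omega \omega(x,k) = I, \qquad k \in \Sigma^{(1)},
$$
where $\mathcal{C}_\omega f := \mathcal{C}_+(f\Lambda^-) + \mathcal{C}_-(f\Lambda^+)$ and $\mathcal{C}_\pm$ are the Cauchy projectors on $\Sigma^{(1)}$. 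Analogously, normalization (b) corresponds to the homogeneous equation $(I-\mathcal{C}_\omega)\omega = 0$.

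First I would verify that $\mathcal{C}_\omega$ defines a bounded operator on $L^2(\Sigma^{(1)})$. On the real line this is a standard consequence of the $L^2$-boundedness of the Hilbert transform together with the bound $\|r\|_{L^\infty} < 1$ (which follows from the identity $1+|r|^2 = 1/|s_{11}|^2$ and $s_{11}(k)\ne 0$ on $\mathbb{R}$ by Assumption 1) plus $r\in H^{1,1}(\mathbb{R})\subset L^\infty(\mathbb{R})$ from Theorem 1. On the compact circles $\kappa_j,\kappa_j^*$ the factors $\Lambda^\pm$ are smooth, so the corresponding piece of $\mathcal{C}_\omega$ is a compact (in fact smoothing) perturbation. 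Consequently $I-\mathcal{C}_\omega$ is a Fredholm operator of index zero, and the Fredholm alternative reduces existence and uniqueness of the solution to normalization (a) to triviality of the kernel of the homogeneous problem, which is exactly the problem with normalization (b).

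Next I would invoke the vanishing argument already presented: for any solution $M$ of the homogeneous RHP (normalization (b)), the function $\mathcal{N}(k) := M(k) M(k^*)^H$ decays like $\mathcal{O}(k^{-2})$, is analytic off $\mathbb{R}$ (the jumps across the circles $\kappa_j,\kappa_j^*$ cancel by symmetry, as shown in the excerpt), and therefore $\int_{\mathbb{R}} \mathcal{N}_\pm(k)\,dk = 0$ by Cauchy's and Jordan's theorems. Since $k \in \mathbb{R}$ makes $V'(k)$ Hermitian and positive definite (its eigenvalues are the positive roots of $\lambda^2 - (2+|r|^2)\lambda + 1 = 0$ since $\det V' = 1$ and $\operatorname{tr} V' = 2+|r|^2 > 0$), the identity $\int_{\mathbb{R}} M_-(V'+V'^H) M_-^H\,dk = 0$ forces $M_- \equiv 0$ on $\mathbb{R}$, hence $M_+\equiv 0$, and Morera's theorem then propagates $M\equiv 0$ across $\mathbb{R}$. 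The circles $\kappa_j,\kappa_j^*$ are handled by the same Morera argument applied to $\mathcal{N}$, giving $M\equiv 0$ on the whole of $\mathbb{C}$.

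The main obstacle I anticipate is twofold. The first and more substantive point is the positive definiteness of $V' + V'^H$ on $\mathbb{R}$; this is where the specific structure of the scattering data (in particular $1+|r|^2>0$ and the absence of zeros of $s_{11}$ on $\mathbb{R}$ from Assumption 1) is essential, and without it the vanishing lemma fails. The second is verifying that the symmetry reduction of $\mathcal{N}$ across the residue circles actually removes the jump, which requires careful use of the relation $V'(k) V'(k^*)^H = I$ there — a computation that comes down to the explicit triangular form of $V'$ on $\kappa_j \cup \kappa_j^*$ and the symmetry $c_j^* = -c_j^*$ dictated by the RHP data. Once these two points are in place, combining the vanishing lemma with the Fredholm alternative yields unique solvability of RHP \ref{r111} with normalization (a), proving the proposition.
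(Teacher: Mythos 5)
Your proposal follows essentially the same route as the paper: the factorization $V'=(I-\Lambda^{-})^{-1}(I+\Lambda^{+})$, the Fredholm alternative for the associated singular integral equation in $\omega$, and the vanishing lemma built on $\mathcal{N}(k)=M(k)M(k^{*})^{H}$ with the positive definiteness of $V'+V'^{H}$ on $\mathbb{R}$ and the Morera/Jordan arguments are exactly the ingredients of Section~4 of the paper. (One small slip: the identity $1+|r|^{2}=1/|s_{11}|^{2}\ge 1$ together with Assumption~1 gives only $r\in L^{\infty}(\mathbb{R})$, not $\|r\|_{L^{\infty}}<1$, but boundedness is all that the $L^{2}$ estimate for the Cauchy operator requires, so the argument is unaffected.)
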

\section{Conjugation}
Through the form of the jump matrix and the residual condition, it can be found that the long-term asymptotic of RHP1 is affected by the growth and attenuation of the exponential function $e^{\pm2 i t \theta(k)}$. Therefore, it is necessary to deal with the jump matrix in Eq. (\ref{vv}) and the oscillation term in the residual condition, and decompose the jump matrix according to the sign change graph of $Re(i\theta)$ to ensure that any jump matrix is bounded in a given area.
We further rewrite $\theta$ in the form
$$
\theta=-\frac{\alpha k}{4k_0^2}-\frac{\alpha}{4k}=-\frac{\alpha}{4}(\frac{k}{k_0^2}+\frac{1}{k}),
$$
which leads to
\begin{equation}\label{itheta}
Re(i\theta)=\frac{Im(k)\alpha}{4}(\frac{1}{k_0^2}-\frac{1}{|k|^2}).
\end{equation}
For convenience, we mainly consider $\alpha x<0,t>0$,and can be treated similarly for $t<0$. Without losing generality, it can be assumed that $\alpha<0$ and $x>0$. Based on this, we find two phase points of function $\theta(k)$ are $\pm k_0$, where $k_0=\sqrt{-\frac{\alpha t}{4 x}}$.

According to the sign change of $Re(i\theta)$, the attenuation region of oscillation term $e^{2\pm it\theta}$ can be obtained when $t\rightarrow\infty$. which is shown in Fig. \ref{3t}.
\begin{figure}[htpb]
\centering
{\includegraphics[height=0.4\textwidth]{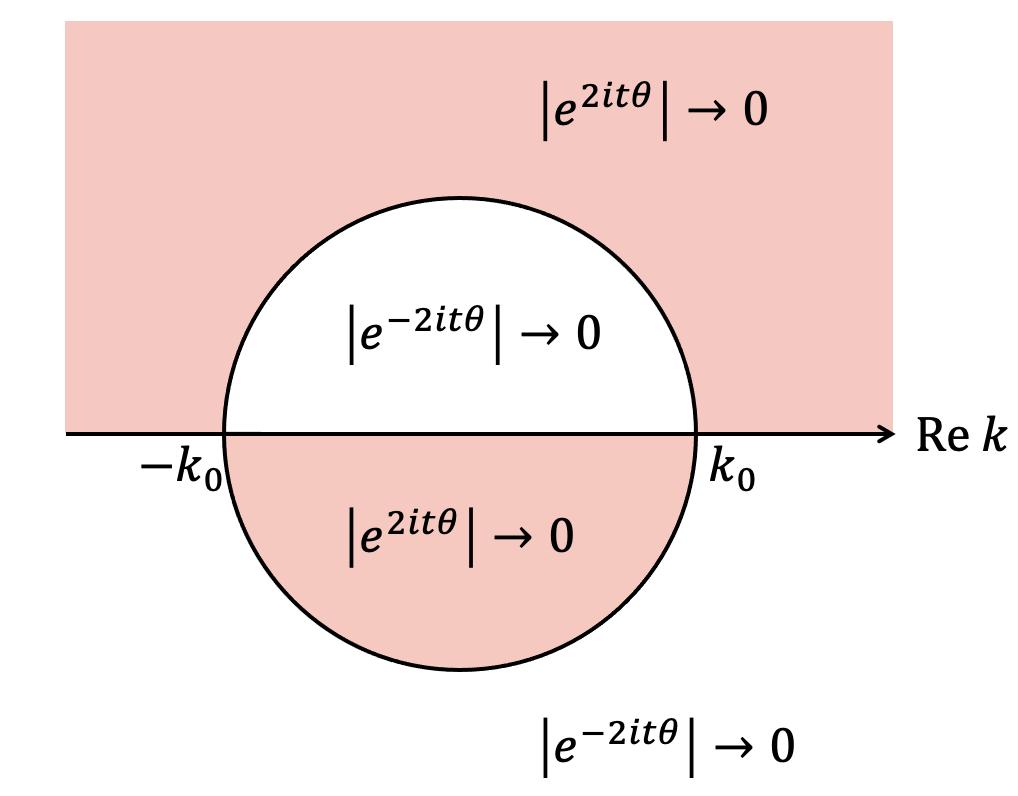}}

~~~~~~~~~~~~\caption{\small  Decay domains of exponential oscillatory terms.}\label{3t}
\end{figure}

The jump matrix $V(k)$ in RHP1 has the following decomposition
$$
V(k)=\left(\begin{array}{cc}
1 & {r}^{*}(k^{*}) e^{-2 i t \theta} \\
0 & 1
\end{array}\right)\left(\begin{array}{cc}
1 & 0 \\
r(k) e^{2 i t \theta} & 1
\end{array}\right),~~~ |k|>k_0,
$$
and
$$
V(k)=\left(\begin{array}{cc}
1 & 0 \\
\frac{r(k)}{1+|r(k)|^{2}} e^{2 i t \theta} & 1
\end{array}\right)\left(\begin{array}{cc}
1+|r(k)|^{2} & 0 \\
0 & \frac{1}{1+|r(k)|^{2}}
\end{array}\right)\left(\begin{array}{cc}
1 & \frac{{r}^{*}(k^{*})}{1+|r(k)|^{2}} e^{-2 i t \theta} \\
0 & 1
\end{array}\right), |k|<k_0.
$$
In order to remove the diagonal matrix in the second decomposition, we introduce a scalar RHP about $\delta(k)$ by using the method in Ref. \cite{CF-2022-JDE}.
\begin{rhp}\label{brh}
Find a scalar function $\delta(k)$ which satisfies:

$\bullet$  $\delta(k)$ is analytic in  $\mathbb{C}\backslash\mathbb{R}$;

$\bullet$ $
\delta_{+}(k)=\delta_{-}(k)\left(1+|r(k)|^{2}\right), \quad  |k|<k_0;$

~~$\delta_{+}(k)=\delta_{-}(k), \quad  |k|>k_0 $;

$\bullet$  $\delta(k) \rightarrow 1, \quad \text { as } k \rightarrow \infty.$
\end{rhp}
Using the Plemelij formula, it is easy to write the unique solution of the above RHP \ref{brh} as
\begin{equation}
\delta(k)=\exp \left[i \int_{-k_0}^{k_0} \frac{\nu(s)}{s-k} d s\right],
\end{equation}
where $\nu(s)=-\frac{1}{2 \pi} \log \left(1+|r(s)|^{2}\right)$.
The trace formula can be given directly according to the calculation
\begin{proposition}
\begin{equation}
s_{11}(k)=\prod_{j=1}^{N} \frac{k-k_{j}}{k-{k}^{*}_{j}} \exp \left[-i \int_{-\infty}^{+\infty} \frac{\nu(z)}{z-k} d z\right],
\end{equation}
\begin{equation}
s_{22}(k)=\prod_{j=1}^{N} \frac{k-{k}^{*}_{j}}{k-k_{j}} \exp \left[i \int_{-\infty}^{+\infty} \frac{\nu(z)}{z-k} d z\right].
\end{equation}
\end{proposition}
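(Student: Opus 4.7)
My plan is to treat the two formulas as a single scalar additive Riemann--Hilbert problem for an appropriate branch of $\log s_{11}$ (respectively $\log s_{22}$), solve it explicitly by the Plemelj formula, and then exponentiate.

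First I would introduce the Blaschke-type product $p(k)=\prod_{j=1}^{N}\frac{k-k_{j}}{k-k_{j}^{*}}$. It is meromorphic on $\mathbb{C}$, satisfies $|p(k)|=1$ for $k\in\mathbb{R}$ and $p(k)\to 1$ as $k\to\infty$, and its zeros and poles are exactly those of $s_{11}$ in $\mathbb{C}^{+}$ and of $s_{22}$ in $\mathbb{C}^{-}$. Under the generic assumption on the scattering data (no real zeros and finitely many simple interior zeros of $s_{11}$), the ratio $\phi(k):=s_{11}(k)/p(k)$ is analytic, nonvanishing, continuous up to $\mathbb{R}$ and tends to $1$ at infinity in $\overline{\mathbb{C}^{+}}$. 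By the symmetry $s_{22}(k)=s_{11}^{*}(k^{*})$, the companion $\psi(k):=s_{22}(k)\,p(k)$ has the same three properties in $\overline{\mathbb{C}^{-}}$.

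Next I would assemble the piecewise function
$$
H(k)=\begin{cases}\log\phi(k), & k\in\mathbb{C}^{+},\\ -\log\psi(k), & k\in\mathbb{C}^{-},\end{cases}
$$
with the principal branch of the logarithm, which is single-valued since $\phi$ and $\psi$ are nonvanishing and approach $1$ at infinity. Then $H$ is analytic in $\mathbb{C}\setminus\mathbb{R}$, vanishes at infinity, and across the real line satisfies
$$
H_{+}(k)-H_{-}(k)=\log\phi(k)+\log\psi(k)=\log\bigl(s_{11}(k)s_{22}(k)\bigr).
$$
Combining the symmetry $s_{22}(k)=\overline{s_{11}(k)}$ on $\mathbb{R}$ with $|s_{11}|^{2}+|s_{12}|^{2}=1$ and $r=s_{12}/s_{11}$ yields $s_{11}(k)s_{22}(k)=1/(1+|r(k)|^{2})$, so the jump reduces to $H_{+}-H_{-}=-\log(1+|r|^{2})=2\pi\nu(k)$.

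This is a standard scalar additive RHP whose unique decaying solution, by the Plemelj formula, is
$$
H(k)=\frac{1}{2\pi i}\int_{-\infty}^{+\infty}\frac{2\pi\nu(s)}{s-k}\,ds=-i\int_{-\infty}^{+\infty}\frac{\nu(s)}{s-k}\,ds,\qquad k\in\mathbb{C}\setminus\mathbb{R}.
$$
Reading this off in $\mathbb{C}^{+}$ and exponentiating $\log\phi$ produces the stated formula for $s_{11}$, while reading it off in $\mathbb{C}^{-}$ and exponentiating $-\log\psi$ produces the one for $s_{22}$, using $1/p(k)=\prod_{j}(k-k_{j}^{*})/(k-k_{j})$. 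The only genuine obstacle is justifying a single-valued principal logarithm of $\phi$ and $\psi$ on the closed half-planes; this reduces to checking that $\phi,\psi$ have no zeros there, which is precisely why the assumption rules out real zeros of $s_{11}$ and why we absorbed the complex zeros into $p$. Once that is done, the integrability of $\nu$ and the decay of $H$ at infinity (both consequences of $r\in H^{1,1}(\mathbb{R})$) make the application of Plemelj routine.
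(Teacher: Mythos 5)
Your proof is correct and follows essentially the same route as the paper: your $\phi$ and $\psi$ are exactly the paper's $\beta^{+}$ and $\beta^{-}$ (the zero-stripped ratios built from the Blaschke product), the jump $\log\bigl(s_{11}s_{22}\bigr)=-\log\bigl(1+|r|^{2}\bigr)=2\pi\nu$ is obtained the same way, and the Plemelj formula closes the argument identically. If anything your sign bookkeeping is the more careful one: the paper's intermediate display $\beta^{\pm}(k)=\exp\bigl(\pm i\int\nu(z)/(z-k)\,dz\bigr)$ carries a sign typo, whereas your $H(k)=-i\int\nu(s)/(s-k)\,ds$ is the version consistent with the stated trace formulas.
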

\begin{proof}
From the above, we know that $s_{11}(k)$ and $s_{22}(k)$ are analytic on $\mathbb{C}^{+}$ and $\mathbb{C}^{-}$  respectively, and according to the assumption, the discrete zeros of $s_{11}(k)$ and $s_{12}(k)$ are $k_j$ and $k^{*}_j$ respectively, so let
\begin{equation}
\beta^{+}(k)=s_{11}(k) \prod_{j=1}^{N} \frac{k-{k}^{*}_{j}}{k-k_{j}}, \quad \beta^{-}(k)=s_{22}(k) \prod_{j=1}^{N} \frac{k-k_{j}}{k-{k}^{*}_{j}},\label{s22}
\end{equation}
they are still analytic on $\mathbb{C}^{+}$, $\mathbb{C}^{-}$, but they no longer have zeros. And have $\beta^{+}(k) \beta^{-}(k)=s_{11}(k) s_{22}(k), k \in \mathbb{R}.$ It can be obtained that
$$
\beta^{+}(k) \beta^{-}(k)=s_{11} s_{22}=\frac{1}{1+|r(k)|^{2}}, \quad k \in \mathbb{R}
$$
through
$$
\operatorname{det} S(k)=s_{11}(k) s_{22}(k)-s_{21}(k) s_{12}(k)=1,
$$
which can leads to that
$$
\log \beta^{+}(k)-\left(-\log \beta^{-}(k)\right)=-\log \left(1+|r(k)|^{2}\right), \quad k \in \mathbb{R}
$$
using Plemelj formula, we can get
\begin{equation}
\beta^{\pm}(k)=\exp \left(\pm i \int_{-\infty}^{\infty} \frac{\nu(z)}{z-k} d z\right), \quad k \in \mathbb{C}^{\pm}.\label{s11}
\end{equation}
Therefore, the trace formula can be obtained by bringing Eq.(\ref{s11}) into (\ref{s22}).
\end{proof}

Next, for the convenience of follow-up research, we first introduce some notations
\begin{equation}
\begin{array}{l}
\Delta_{k_{0}}^{+}=\left\{j \in\{1, \ldots, N\} \mid |k_j|>k_0 \right\}, \\
\Delta_{k_{0}}^{-}=\left\{j \in\{1, \ldots, N\} \mid |k_j|<k_0\right\}.\\
\end{array}
\end{equation}
In addition, the function is introduced
\begin{equation}
T(k)=\prod_{j \in \Delta_{k_0^{-}}} \frac{k-{k}^{*}_{j}}{k-k_{j}} \delta(k).\label{tk}
\end{equation}

\begin{proposition}
The function $T(k)$ defined by Eq.(\ref{tk}) has the following properties:

$\bullet$ $T(k)$ is meromorphic function in $\mathbb{C}\backslash(-k_0,k_0)$. For $j\in\Delta_{k_{0}}^{-}$, it is a simple pole at $k_j$ and a simple zero at $k_j^{*}$. Other places are non-zero and analytic;

$\bullet$ $T^{*}(k^{*})T(k)=1$, for $\mathbb{C}\backslash(-k_0,k_0)$;

$\bullet$ For $k\in(-k_0,k_0)$, the boundary values $T_{\pm}(k)$ satisfy
$$
T_{+}(k)=T_{-}(k)\left(1+|r(k)|^{2}\right), k \in\left(-k_0,k_0\right);
$$

$\bullet$ When $|k|\rightarrow\infty$, $|\arg (k)| \leq c<\pi$
$$
T(k)=1+\frac{i}{k}\left[2 \sum_{j \in \Delta_{k_0}^{-}} \operatorname{Im} k_{j}-\int_{-k_0}^{k_0} \nu(s) d s\right]+O\left(k^{-2}\right);
$$

 $\bullet$ $k \rightarrow\pm k_0,$ along any ray $\pm k_0+e^{i\phi}\mathbb{R}_{+}$ with $|\phi| \leq c<\pi$
 $$
\left|T(k)-T_{0}\left(\pm k_0\right)\left(k\mp k_{0}\right)^{i \nu\left(\pm k_0\right)}\right| \leq c\left|k\mp k_{0}\right|^{\frac{1}{2} }
$$
where
$$
\begin{aligned}
&T_{0}\left(\pm k_0\right)=\prod_{j \in \Delta_{k_{0}}^{-}}\left(\frac{\pm k_0-{k}^{*}_{j}}{\pm k_0-k_{j}}\right) e^{i \beta\left(k, \pm k_0\right)},\\
&\beta(k,-k_0)=\left(i \int_{-k_0}^{-k_0+1} \frac{\nu\left(-k_0\right)}{z-k} d z+i \int_{-k_0}^{k_0} \frac{\nu(z)-\chi_1(z) \nu\left(-k_0\right)}{z-k} d z\right),\\
&\beta(k,k_0)=\left(i \int_{k_0-1}^{k_0} \frac{\nu\left(k_0\right)}{z-k} d z+i \int_{-k_0}^{k_0} \frac{\nu(z)-\chi_2(z) \nu\left(k_0\right)}{z-k} d z\right),\\
&\chi_{1}(k)=\left\{\begin{array}{lc}
1, & -k_0<k<-k_0+1, \\
0, & \text { elsewhere },
\end{array} \quad \chi_{2}(k)=\left\{\begin{array}{lc}
1, & k_0-1<k<k_0 ,\\
0, & \text { elsewhere }.
\end{array}\right.\right.
\end{aligned}
$$
\end{proposition}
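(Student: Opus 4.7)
The plan is to verify the five properties in order, exploiting the fact that $T(k)$ factors as a Blaschke-type product $B(k):=\prod_{j\in\Delta_{k_0}^-}\frac{k-k_j^{*}}{k-k_j}$ times $\delta(k)$, where $\delta$ is given by the explicit Cauchy integral from RHP \ref{brh}. Properties (1)--(3) are essentially immediate: $\delta$ is analytic on $\mathbb{C}\setminus[-k_0,k_0]$ by construction, while $B(k)$ is rational with simple poles at $\{k_j\}_{j\in\Delta_{k_0}^-}$ and simple zeros at $\{k_j^{*}\}_{j\in\Delta_{k_0}^-}$ (all off the real axis because $k_j\in\mathbb{C}^{+}$), hence analytic across $(-k_0,k_0)$. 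This gives the meromorphic structure in (1) and shows that $T$ inherits the jump of $\delta$ verbatim, giving (3). For the symmetry (2), I would use that $\nu$ is real-valued, so $\overline{\delta(\bar k)}=\exp\bigl[-i\int_{-k_0}^{k_0}\nu(s)/(s-k)\,ds\bigr]=\delta(k)^{-1}$, and analogously $\overline{B(\bar k)}=B(k)^{-1}$; multiplying produces $T^{*}(k^{*})T(k)=1$.

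For the expansion (4) at infinity I would Taylor expand each factor in $k^{-1}$. Substituting $(s-k)^{-1}=-k^{-1}+O(k^{-2})$ into the integral defining $\delta$ yields
\[
\delta(k)=1-\frac{i}{k}\int_{-k_0}^{k_0}\nu(s)\,ds+O(k^{-2}),
\]
uniformly in any sector $|\arg k|\le c<\pi$. Similarly, $\frac{k-k_j^{*}}{k-k_j}=1+\frac{2i\operatorname{Im}k_j}{k}+O(k^{-2})$, whence $B(k)=1+\frac{2i}{k}\sum_{j\in\Delta_{k_0}^-}\operatorname{Im}k_j+O(k^{-2})$. Multiplying the two expansions gives the coefficient $\frac{i}{k}\bigl[2\sum\operatorname{Im}k_j-\int_{-k_0}^{k_0}\nu(s)\,ds\bigr]$, as claimed.

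The principal obstacle is (5), the local description at the phase points; I will treat $+k_0$ (the case $-k_0$ is identical with $\chi_1$ in place of $\chi_2$). The strategy is to isolate the logarithmic singularity of $\delta$ by adding and subtracting $\chi_2(s)\nu(k_0)$ in the numerator of the Cauchy kernel,
\[
i\int_{-k_0}^{k_0}\frac{\nu(s)}{s-k}\,ds=i\int_{-k_0}^{k_0}\frac{\nu(s)-\chi_2(s)\nu(k_0)}{s-k}\,ds+i\nu(k_0)\int_{k_0-1}^{k_0}\frac{ds}{s-k}.
\]
The second integral evaluates to $i\nu(k_0)[\log(k_0-k)-\log(k_0-1-k)]$, which extracts a factor $(k-k_0)^{i\nu(k_0)}$ multiplying a term analytic at $k_0$. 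The first integrand is only weakly singular: from Theorem 1, $r\in H^{1,1}(\mathbb{R})$, and Sobolev embedding gives $H^{1}(\mathbb{R})\hookrightarrow C^{1/2}(\mathbb{R})$, so $\nu(s)=-\frac{1}{2\pi}\log(1+|r(s)|^{2})$ is H\"older-$\tfrac12$.

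Consequently, a standard estimate for Cauchy integrals with H\"older data shows that the first exponent is H\"older-$\tfrac12$ in $k$ uniformly along any non-tangential ray $k_0+e^{i\phi}\mathbb{R}_{+}$, $|\phi|\le c<\pi$. The Blaschke prefactor $B(k)$, being analytic at $k_0$, contributes only a Lipschitz $O(|k-k_0|)$ correction to $B(k_0)=\prod(\tfrac{k_0-k_j^{*}}{k_0-k_j})$. Combining these via $|e^{z}-e^{w}|\le e^{\max(\operatorname{Re}z,\operatorname{Re}w)}|z-w|$ yields the stated bound $c|k-k_0|^{1/2}$, with leading constant $T_0(k_0)$ identified as $B(k_0)\,e^{i\beta(k,k_0)}$. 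The extension from non-tangential rays to general $\phi$ with $|\phi|\le c<\pi$ is uniform because the implicit constants in the H\"older estimate depend only on $c$.
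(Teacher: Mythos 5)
The paper states this proposition without giving any proof (it is quoted as a standard fact from the $\bar{\partial}$ steepest-descent literature, cf.\ the $T$-function constructions in Borghese--Jenkins--McLaughlin and Cheng--Fan), so there is no in-paper argument to compare against; your proposal supplies exactly the standard argument and is essentially correct. Properties (1)--(4) are handled as one would expect: the factorization $T=B\cdot\delta$ with $B$ the Blaschke-type product, the Plemelj jump $\delta_+/\delta_-=e^{-2\pi\nu}=1+|r|^2$, the conjugation symmetry from $\nu$ being real, and the $k^{-1}$ expansions of each factor all check out. For (5), your decomposition of the Cauchy exponent via $\chi_2(s)\nu(k_0)$ is the right device, and the H\"older-$\tfrac12$ regularity of $\nu$ obtained from $r\in H^{1,1}\subset H^1\hookrightarrow C^{0,1/2}$ together with the Plemelj--Privalov-type estimate for Cauchy integrals of H\"older data is the correct way to get the $|k\mp k_0|^{1/2}$ rate. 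One small bookkeeping point you should make explicit: $\exp\bigl[i\nu(k_0)\log(k_0-k)\bigr]=(k_0-k)^{i\nu(k_0)}$ differs from the stated factor $(k-k_0)^{i\nu(k_0)}$ by a nonzero bounded constant $e^{\mp\pi\nu(k_0)}$ depending on the branch, which must be absorbed into $T_0(k_0)$; likewise the factor $(k_0-1-k)^{-i\nu(k_0)}$ is analytic and nonvanishing near $k_0$ and belongs in the $e^{i\beta}$ prefactor. (The paper's displayed formulas for $T_0$ and $\beta$ appear to carry typos -- an extra factor of $i$ and an apparent double-counting of the logarithmic term -- so your identification of the leading constant is as faithful to the intended statement as one can reasonably be.)
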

Through the defined $T(k)$, the following transformation can be done to eliminate the diagonal matrix in the decomposition of the jump matrix in the interval $[-k_0,k_0]$.
\begin{equation}
M^{(1)}(k)=M(k) T(k)^{-\sigma_{3}},
\end{equation}
which satisfies the following RHP:
\begin{rhp}\label{4r}
Find a matrix $M^{(1)}(k)$ that satisfies:

$\bullet$ $M^{(1)}$ is analytic within $\mathbb{C}\backslash \mathbb{R}\cup\mathcal{K}\cup\mathcal{K}^{*};$

$\bullet$  $M^{(1)}(k)=\sigma_{0} (M^{(1)})^{*}\left(k^{*}\right)\sigma_{0}^{-1};$

$\bullet$$M^{(1)}(k)=I+\mathcal{O}\left(k^{-1}\right), \quad k \rightarrow \infty;$

$\bullet$ $M^{(1)}_{+}( k)=M^{(1)}_{-}( k) V^{(1)}(k), \quad k \in \mathbb{R},$

where
$$
V^{(1)}(k)=\left\{\begin{array}{c}
\left(\begin{array}{cc}
1 & 0 \\
\frac{r}{1+|r|^{2}} T_{-}^{-2} e^{2 i t \theta(k)} & 1
\end{array}\right)\left(\begin{array}{cc}
1 & \frac{{r}^{*}}{1+|r|^{2}} T_{+}^{2} e^{-2 i t \theta(k)} \\
0 & 1
\end{array}\right), \quad |k|<k_0, \\
\left(\begin{array}{cc}
1 & r^{*} T^{2} e^{-2 i t \theta(k)} \\
0 & 1
\end{array}\right)\left(\begin{array}{cc}
1 & 0 \\
r T^{-2} e^{2 i t \theta(k)} & 1
\end{array}\right), \quad |k|>k_0;
\end{array}\right.
$$

$\bullet$  $M^{(1)}(k)$ has simple poles at each point in $k_j\in \mathbb{C}^{+}$ and $k_j^{*}\in \mathbb{C}^{-}$ with:
\begin{equation}
\begin{array}{l}
\underset{k={k}_{j}}{\operatorname{Res}} M^{(1)}(k)=\left\{\begin{array}{ll}
\underset{k \rightarrow k_{j}}\lim  M^{(1)}(k)\left(\begin{array}{cc}
0 & c_{j}^{-1}\left(\frac{1}{T}\right)^{\prime}\left(k_{j}\right)^{-2} e^{-2 i t \theta\left(k_{j}\right)} \\
0 & 0
\end{array}\right), \quad j \in \Delta_{k_{0}}^{-}, \\
\underset{k \rightarrow k_{j}}\lim  M^{(1)}(k)\left(\begin{array}{cc}
0 & 0 \\
c_{j} T\left(k_{j}\right)^{-2} e^{2 i t \theta\left(k_{j}\right)} & 0
\end{array}\right), \quad j \in \Delta_{k_{0}}^{+}.
\end{array}\right.\\
\underset{k={k}^{*}_{j}}{\operatorname{Res}} M^{(1)}(k)=\left\{\begin{array}{ll}
\underset{k \rightarrow {k}^{*}_{j}}\lim M^{(1)}(k)\left(\begin{array}{cc}
0 & 0 \\
-({c}^{*}_{j})^{-1} {T^{\prime}}^{-2}({k}^{*}_{j}) e^{2 i t \theta({k}^{*}_{j})} & 0
\end{array}\right), \quad j \in \Delta_{k_{0}}^{-}, \\
\underset{k \rightarrow {k}^{*}_{j}}\lim  M^{(1)}(k)\left(\begin{array}{cc}
0 & -{c}^{*}_{j} T^{2}({k}^{*}_{j}) e^{-2 i t \theta({k}^{*}_{j})} \\
0 & 0
\end{array}\right), \quad j \in \Delta_{k_{0}}^{+} .
\end{array}\right.
\end{array}
\end{equation}
\end{rhp}
In addition, due to $T(k)^{-\sigma3}\rightarrow I$, as $k\rightarrow\infty$, so the solution of the coupled dispersion AB system can be expressed as
\begin{equation}
A= 4 i\lim _{k \rightarrow \infty} (k M^{(1)}(k))_{12},~~B=-\frac{4 i}{\beta} \lim _{k \rightarrow \infty} \frac{d}{d t}(k M^{(1)}(k))_{11}.
\end{equation}

\section{Contour deformation}
The next idea is to eliminate the jump on the real axis and open it at a small angle at the steady-state phase point to deform the contour of RHP \ref{4r}. According to the number of steady-state phase points and spectral singularity points, the following contours can be considered
\begin{equation}
\Sigma^{(2)}=\Sigma_{1} \cup \Sigma_{2}^{\pm} \cup \Sigma_{3}^{\pm} \cup \Sigma_{4} \cup \Sigma_{5} \cup \Sigma_{6}^{\pm} \cup \Sigma_{7}^{\pm} \cup \Sigma_{8},
\end{equation}
shown in Fig. \ref{4t}.
\begin{figure}[htpb]
\centering
\includegraphics[ width=8.6cm,height=3.3cm]{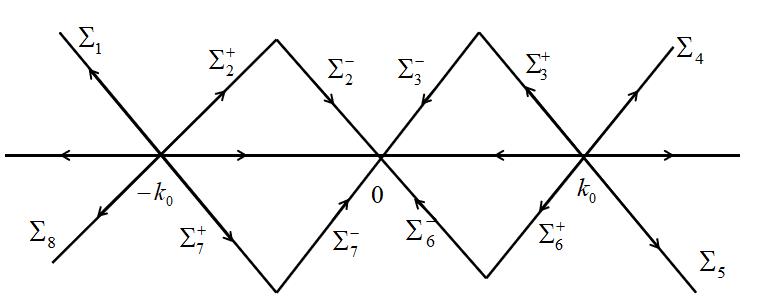}
\caption{\small Deformation from $\mathbb{R}$ to $\Sigma^{(2)}$.}\label{4t}
\end{figure}
In order to facilitate later applications, new tokens are introduced
$$
\varrho=\frac{1}{2} \min _{k, \mu \in \mathcal{K} \cup \mathcal{K}^{*}}|k-\mu|.
$$
Because of the symmetry, and the pole is not on the real axis, then for any $k_j=u_j+iv_j\in \mathcal{K}$, obviously there is $k^{*}_j=u_j-iv_j\in \mathcal{K}^{*}$. according to the above representation, there must be $|k_j-k^{*}_j|=2|v_j|>\varrho$. So there must be $\operatorname{dis}\left(k, \mathbb{R}\right)=\left|v\right| \geq \varrho>0$, here is due to the arbitrariness of $j$.

In order to keep the residual condition unchanged during contour deformation, an eigenfunction is defined near the discrete spectrum
\begin{equation}
\Upsilon_{\mathcal{K}}(k)= \begin{cases}1 , \operatorname{dist}\left(k, \mathcal{K} \cup \mathcal{K}^{*}\right)<\varrho / 3, \\ 0 , \operatorname{dist}\left(k, \mathcal{K} \cup \mathcal{K}^{*}\right)>2 \varrho / 3.\end{cases}
\end{equation}
Now we find a matrix $R_j \rightarrow \mathbb{C}$ with the following boundary conditions,
\begin{equation}\label{R1}
R_{1}(k)= \begin{cases}r(k) T^{-2}(k), & k \in\left(-\infty,-k_0 \right), \\ r\left(-k_0\right) T_{0}^{-2}\left(-k_0\right)\left(k+k_0\right)^{-2 i v\left(-k_0\right)}\left(1-\Upsilon_{\mathcal{K}}(k)\right), & k \in \Sigma_{1},\end{cases}
\end{equation}
\begin{equation}\label{R2}
R_{3}(k)= \begin{cases}\frac{r^{*}(k^{*}) T_{+}^{2}(k)}{1+|r(k)|^{2}}, & k \in\left(-k_0, k_0\right), \\ \frac{r^{*}\left(-k_0\right) T_{0}^{2}\left(-k_0\right)}{1+\left|r\left(-k_0\right)\right|^{2}}\left(k+k_0\right)^{2 i v\left(-k_0\right)}\left(1-\Upsilon_{\mathcal{K}}(k)\right), & k \in \Sigma^{+}_{2},\\
0,& k \in \Sigma^{-}_{2},\end{cases}
\end{equation}
\begin{equation}\label{R3}
R_{4}(k)= \begin{cases}\frac{r^{*}(k^{*}) T_{+}^{2}(k)}{1+|r(k)|^{2}}, & k \in\left(-k_0, k_0\right), \\ \frac{r^{*}\left(k_0\right) T_{0}^{2}\left(k_0\right)}{1+\left|r\left(k_0\right)\right|^{2}}\left(k-k_0\right)^{2 i v\left(k_0\right)}\left(1-\Upsilon_{\mathcal{K}}(k)\right), & k \in \Sigma_{3}^{+},\\
0,& k \in \Sigma^{-}_{3},
\end{cases}
\end{equation}
\begin{equation}\label{R4}
R_{5}(k)= \begin{cases}r(k) T^{-2}(k), & k \in\left(k_0,\infty \right), \\ r\left(k_0\right) T_{0}^{-2}\left(k_0\right)\left(k-k_0\right)^{-2 i v\left(k_0\right)}\left(1-\Upsilon_{\mathcal{K}}(k)\right), & k \in \Sigma_{4},\end{cases}
\end{equation}
\begin{equation}\label{R5}
R_{6}(k)= \begin{cases}r^{*}(k^{*}) T^{2}(k), & k \in\left(k_0,\infty \right),\\ r^{*}\left(k_0\right) T_{0}^{2}\left(k_0\right)\left(k-k_0\right)^{2 i v\left(k_0\right)}\left(1-\Upsilon_{\mathcal{K}}(k)\right), & k \in \Sigma_{5},\end{cases}
\end{equation}
\begin{equation}\label{R6}
R_{7}(k)= \begin{cases}\frac{r(k) T_{-}^{-2}(k)}{1+|r(k)|^{2}}, & k \in\left(-k_0, k_0\right), \\ \frac{r\left(k_0\right) T_{0}^{-2}\left(k_0\right)}{1+\left|r\left(k_0\right)\right|^{2}}\left(k-k_0\right)^{-2 i v\left(k_0\right)}\left(1-\Upsilon_{\mathcal{K}}(k)\right), & k \in \Sigma_{6}^{+},\\
0,& k \in \Sigma^{-}_{6},\end{cases}
\end{equation}
\begin{equation}\label{R7}
R_{8}(k)= \begin{cases}\frac{r(k) T_{-}^{-2}(k)}{1+|r(k)|^{2}}, & k \in\left(-k_0, k_0\right), \\ \frac{r\left(-k_0\right) T_{0}^{-2}\left(-k_0\right)}{1+\left|r\left(-k_0\right)\right|^{2}}\left(k+k_0\right)^{-2 i v\left(-k_0\right)}\left(1-\Upsilon_{\mathcal{K}}(k)\right), & k \in \Sigma_{7}^{+},\\
0,& k \in \Sigma^{-}_{7},\end{cases}
\end{equation}
\begin{equation}\label{R8}
R_{9}(k)= \begin{cases}r^{*}(k) T^{2}(k), & k \in\left(-\infty,\xi_{1}\right), \\ r^{*}\left(-k_0\right) T_{0}^{2}\left(-k_0\right)\left(k+k_0\right)^{2 i v\left(-k_0\right)}\left(1-\Upsilon_{\mathcal{K}}(k)\right), & k \in \Sigma_{8}.\end{cases}
\end{equation}

And meet the following estimates
\begin{equation}
\left|R_{j}\right| \lesssim \sin ^{2}\left(\arg \left(k\pm k_0\right)\right)+\langle\operatorname{Re}(k)\rangle^{-\frac{1}{2}}.~~j=1,3,...,9
\end{equation}

In this way, the original jump in $\pm k_0$ will become as shown in Fig. \ref{5t} and Fig. \ref{6t}.
\begin{figure}[h]
{\includegraphics[width=10cm,height=5.5cm]{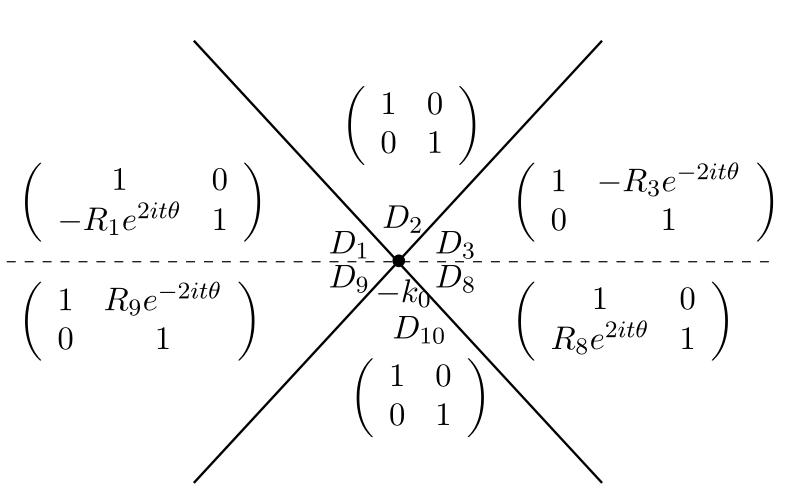}}
\centering
\caption{\small Matrix $\mathcal{R}^{(2)}$ around stationary phase point $-k_0$.}\label{5t}
\end{figure}

\begin{figure}[h]
{\includegraphics[width=10cm,height=5.5cm]{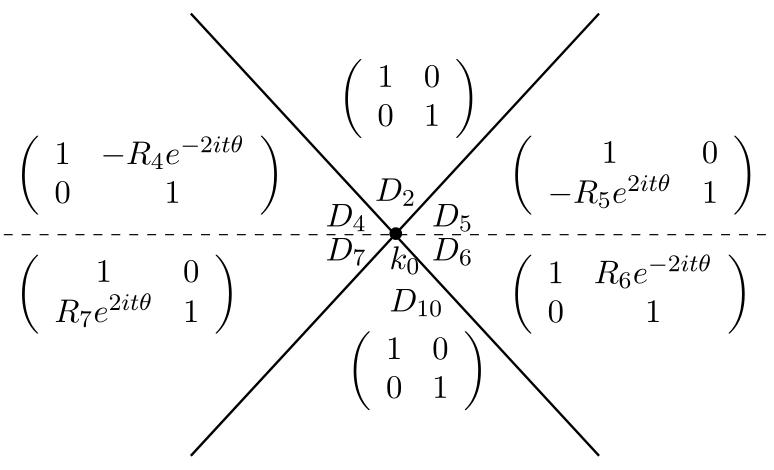}}
\centering
\caption{\small Matrix $\mathcal{R}^{(2)}$ around stationary phase point $k_0$.}\label{6t}
\end{figure}

In order to facilitate future calculation and estimation, the region needs to be divided appropriately, as shown in the Fig.\ref{7t} below.

\begin{figure}[h]
{\includegraphics[width=10cm,height=4.5cm]{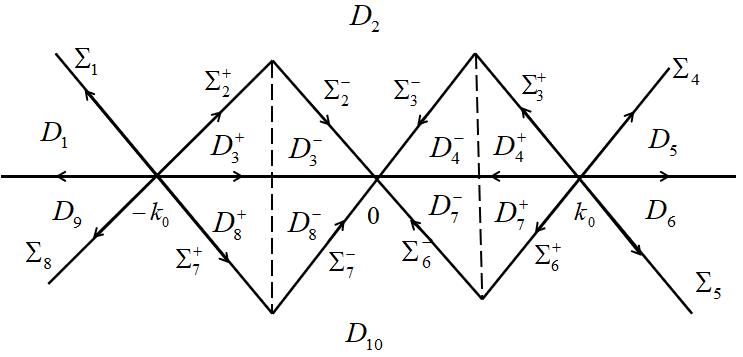}}
\centering
\caption{\small Region segmentation.}\label{7t}
\end{figure}
And making some marks
$$
\begin{array}{ll}
p_{1}(k)=p_{5}(k)=r(k), & p_{6}(k)=p_{9}(k)=r^{*}(k) \\
p_{3}(k)=p_{4}(k)=\frac{r^{*}(k)}{1+|r(k)|^{2}}, & p_{7}(k)=p_{8}(k)=\frac{r(k)}{1+|r(k)|^{2}},
\end{array}
$$
further estimates of $\bar{\partial}R_{j}$ in different regions can be obtained
\begin{lemma}
Suppose $r\in H^{1,1}(\mathbb{R})$, $\bar{\partial}R_{j}$ defined by (\ref{R1})-(\ref{R8}) satisfies
\begin{equation}
\left|\bar{\partial} R_{j}(k)\right| \lesssim\left|\bar{\partial} \Upsilon_{\mathcal{K}}(k)\right|+\left|p_j^{\prime}(\operatorname{Re}(k))\right|+\left|k\pm k_0\right|^{-\frac{1}{2}},
\end{equation}
 on $D_1,D_5,D_6,D_9$ and $D_3^{+},D_4^{+},D_7^{+},D_8^{+}$. $\bar{\partial}R_{j}$ satisfies
 \begin{equation}
\left|\bar{\partial} R_{j}(k)\right| \lesssim\left|\bar{\partial} \Upsilon_{\mathcal{K}}(k)\right|+\left|p_j^{\prime}(\operatorname{Re}(k))\right|+\left|k\right|^{-\frac{1}{2}}
\end{equation}
on $D_3^{-},D_4^{-},D_7^{-},D_8^{-}$.
\end{lemma}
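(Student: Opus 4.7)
The plan is to exhibit $R_j$ explicitly as a smooth interpolation of the prescribed boundary data, then compute $\bar\partial R_j$ in polar coordinates centered at the relevant base point, and estimate the contributions that appear. Near the phase point $k_\star\in\{-k_0,k_0\}$ (or, in the interior sectors discussed below, near $k_\star=0$), write $k=k_\star+\rho e^{i\phi}$, so that
\[
\bar\partial=\frac{e^{i\phi}}{2}\Bigl(\partial_\rho+\frac{i}{\rho}\partial_\phi\Bigr).
\]
Choose a smooth angular cutoff $\chi(\phi)$ equal to $1$ on the real-axis portion of $\partial D_\ell$ and to $0$ on the opening ray of $\Sigma^{(2)}$ bounding $D_\ell$, and extend $R_j$ into $D_\ell$ by
\[
R_j(k)=\bigl[p_j(\operatorname{Re}k)\,g(k)-p_j(k_\star)\,g_\star(k)\bigr]\chi(\phi)+p_j(k_\star)\,g_\star(k)\bigl(1-\Upsilon_{\mathcal{K}}(k)\bigr),
\]
where $g(k)$ stands for the appropriate power of $T(k)$ (divided by $1+|r|^2$ where needed) and $g_\star(k)$ for its stationary-phase surrogate $T_0^{\pm 2}(k_\star)(k-k_\star)^{\mp 2i\nu(k_\star)}$ provided by the last bullet of the proposition on $T$. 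By construction $R_j$ reproduces the prescribed values in \eqref{R1}--\eqref{R8} on the two boundary pieces of $D_\ell$.

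Computing $\bar\partial R_j$ then produces three contributions. The radial derivative $\partial_\rho$ acts only on $p_j(\operatorname{Re}k)$ and yields $p_j'(\operatorname{Re}k)\cos\phi$, which contributes the $|p_j'(\operatorname{Re}k)|$ term. The $\bar\partial$ of $(1-\Upsilon_{\mathcal{K}})$ contributes the $|\bar\partial\Upsilon_{\mathcal{K}}|$ term. The angular derivative $\rho^{-1}\partial_\phi$ hits $\chi(\phi)$ and produces
\[
\rho^{-1}\chi'(\phi)\bigl[p_j(\operatorname{Re}k)g(k)-p_j(k_\star)g_\star(k)\bigr].
\]
Since $r\in H^{1,1}(\mathbb{R})$, Cauchy--Schwarz gives $|p_j(\operatorname{Re}k)-p_j(k_\star)|\le\|p_j'\|_{L^2}|\operatorname{Re}k-k_\star|^{1/2}\lesssim\rho^{1/2}$, while the proposition on $T$ yields $|g(k)-g_\star(k)|\lesssim\rho^{1/2}$. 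Combined with the $\rho^{-1}$ prefactor this produces the $|k\mp k_0|^{-1/2}$ bound in the outer sectors $D_1,D_3^+,D_4^+,D_5,D_6,D_7^+,D_8^+,D_9$.

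In the interior sectors $D_3^-,D_4^-,D_7^-,D_8^-$ the prescribed outer boundary value is identically zero, so the analogous interpolation is taken around $k_\star=0$, at which $p_j$ vanishes since $r(k)=\mathcal{O}(k^2)$ as $k\to 0$ by Proposition 3. The same Sobolev-type estimate centered at the origin then produces $|k|^{-1/2}$. The principal technical difficulty is the non-analyticity of $T$ at $\pm k_0$ caused by the oscillatory factor $(k\mp k_0)^{\pm i\nu(\pm k_0)}$; this is exactly what the pointwise approximation $|T(k)-T_0(\pm k_0)(k\mp k_0)^{\pm i\nu(\pm k_0)}|\lesssim|k\mp k_0|^{1/2}$ is designed to absorb, so that the surrogate $g_\star$ cancels the singular part and the bracket in the angular-derivative term retains size $\rho^{1/2}$.
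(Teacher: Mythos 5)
Your proof is correct and is exactly the standard $\bar\partial$-extension argument (explicit angular interpolation, product rule splitting the $\bar\partial$-derivative into the $\bar\partial\Upsilon_{\mathcal K}$, $p_j'(\operatorname{Re}k)$ and $\rho^{-1}\chi'(\phi)$ pieces, with the Cauchy--Schwarz bound $|p_j(\operatorname{Re}k)-p_j(k_\star)|\lesssim\|p_j'\|_{L^2}\rho^{1/2}$ and the H\"older estimate on $T$ absorbing the $\rho^{-1}$ factor) that the paper relies on; the paper itself states the lemma without proof, deferring to the cited references, and your argument reproduces that intended proof, including the correct use of $r(k)=\mathcal O(k^2)$ at the origin to get the $|k|^{-1/2}$ bound on the interior sectors.
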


Correspondingly, the jump line is transformed into $\Sigma^{(3)}$. Here, two jump lines are added, which are in the following form
\begin{equation}
\tilde{v}=\left(\begin{array}{ll}
I, & k \in\left(\frac{-ik_0}{2}\tan\frac{\pi}{24}, \frac{ik_0}{2}\tan\frac{\pi}{24}\right), \\
\left(\begin{array}{cc}
1 & \left(R_{2}^{+}-R_{2}^{-}\right) e^{-2 i \theta} \\
0 & 1
\end{array}\right), & k \in\left(\frac{ik_0}{2}\tan\frac{\pi}{24}-\frac{k_0}{2}, \frac{i k_0}{2\sqrt{3}} -\frac{k_0}{2}\right), \\
\left(\begin{array}{cc}
1 & 0 \\
\left(R_{7}^{+}-R_{7}^{-}\right) e^{2 i \theta} & 1
\end{array}\right) & k \in\left(-\frac{ik_0}{2}\tan\frac{\pi}{24}-\frac{k_0}{2}, -\frac{i k_0}{2\sqrt{3}} -\frac{k_0}{2}\right), \\
\left(\begin{array}{ccc}
1 & \left(R_{3}^{-}-R_{3}^{+}\right) e^{-2 i \theta} \\
0 & 1 &
\end{array}\right), & k \in\left(\frac{ik_0}{2}\tan\frac{\pi}{24}+\frac{k_0}{2}, \frac{i k_0}{2\sqrt{3}} +\frac{k_0}{2}\right), \\
\left(\begin{array}{ccc}
\left(R_{6}^{-}-R_{6}^{+}\right) e^{2 i \theta} & 1
\end{array}\right), & k \in\left(-\frac{ik_0}{2}\tan\frac{\pi}{24}+\frac{k_0}{2}, -\frac{i k_0}{2\sqrt{3}} +\frac{k_0}{2}\right).
\end{array}\right.
\end{equation}
The jump line $\Sigma^{(3)}$ is shown in Fig. \ref{8t}.

\begin{figure}[h]
{\includegraphics[width=9.6cm,height=3.5cm]{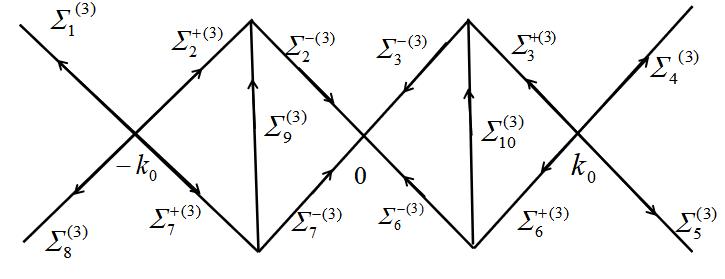}}
\centering
\caption{\small The contour of $\Sigma^{(3)}$.}\label{8t}
\end{figure}
At the same time, similar to Ref.\cite{CLL-2009-ar}, we can get $\tilde{v}$ subjects to the following estimate
$$
\left|\tilde{v}-I\right| \lesssim e^{-t}.
$$
Therefore, the following changes can be made
\begin{equation}
M^{(2)}(k)=M^{(1)}(k) \mathcal{R}^{(2)},
\end{equation}
where
$$
\mathcal{R}^{(2)}(k)= \begin{cases}\left(\begin{array}{cc}
1 & 0 \\
(-1)^{m_j}R_{j}(k) e^{2 i t \theta} & 1
\end{array}\right), & k \in D_{ j},\quad j=1,5,7,8 \\
\left(\begin{array}{cc}
1 & (-1)^{m_j}R_{j}(k) e^{-2 i t \theta} \\
0 & 1
\end{array}\right), & k \in D_{j},\quad j=3,4,6,9 \\
\left(\begin{array}{ll}
1 & 0 \\
0 & 1
\end{array}\right), & k \in D_{2} \cup D_{10},\end{cases}
$$
where $ m_1=m_3=m_4=m_5=1, m_6=m_7=m_8=m_9=0$. Matrix $M^{(2)}(k)$ satisfies the following RHP:
\begin{rhp}\label{5r}
The $M^{(2)}(k)$  obtained from $M^{(1)}(k)$  and $\mathcal{R}^{(2)}(k)$  above is expected to satisfy the following properties:

$\bullet$ $M^{(2)}$ is analytic within $\mathbb{C}\backslash \Sigma^{(2)}\cup\mathcal{K}\cup\mathcal{K}^{*};$

$\bullet$  $M^{(2)}(k)=\sigma_{0} (M^{(2)}(k^{*}))^{*}\sigma_{0}^{-1};$

$\bullet$$M^{(2)}(k)=I+\mathcal{O}\left(k^{-1}\right), \quad k \rightarrow \infty;$

$\bullet$ $M^{(2)}_{+}(k)=M^{(2)}_{-}(k) V^{(2)}(k), \quad k \in \Sigma^{(2)},$
where
$$
V^{(2)}(k)= \begin{cases}\left(\begin{array}{cc}
1 & 0 \\
R_{1}(k) e^{2 i t \theta} & 1
\end{array}\right), & k \in \Sigma_{1}, \\
\left(\begin{array}{cc}
1 & R_{3}(k) e^{-2 i t \theta} \\
0 & 1
\end{array}\right), & k \in \Sigma_{2}, \\
\left(\begin{array}{cc}
1 & R_{4}(k) e^{-2 i t \theta} \\
0 & 1
\end{array}\right), & k \in \Sigma_{3} ,\\
\left(\begin{array}{cc}
1 &  \\
R_{5}(k) e^{2 i t \theta} & 1
\end{array}\right), & k \in \Sigma_{4}, \\
\left(\begin{array}{cc}
1 & R_{6}(k) e^{-2 i t \theta}\\
0  & 1
\end{array}\right), & k \in \Sigma_{5},\\
\left(\begin{array}{cc}
1 & 0\\
R_{7}(k) e^{2 i t \theta}  & 1
\end{array}\right), & k \in \Sigma_{6},\\
\left(\begin{array}{cc}
1 & 0\\
R_{8}(k) e^{2 i t \theta}  & 1
\end{array}\right), & k \in \Sigma_{7},\\
\left(\begin{array}{cc}
1 & R_{9}(k) e^{-2 i t \theta} \\
0 & 1
\end{array}\right), & k \in \Sigma_{8}.
 \end{cases}
$$

$\bullet$  $M^{(2)}(k)$ has simple poles at each point in $\mathcal{K}\cup\mathcal{K}^{*}$ with:
\begin{equation}
\begin{array}{l}
\underset{k={k}_{j}}{\operatorname{Res}} M^{(2)}(k)=\left\{\begin{array}{ll}
\underset{k \rightarrow k_{j}}\lim M^{(2)}(k)\left(\begin{array}{cc}
0 & c_{j}^{-1}\left(\frac{1}{T}\right)^{\prime}\left(k_{j}\right)^{-2} e^{-2 i t \theta\left(k_{j}\right)} \\
0 & 0
\end{array}\right), \quad j \in \Delta_{k_{0}}^{-}, \\
\underset{k \rightarrow k_{j}}\lim  M^{(2)}(k)\left(\begin{array}{cc}
0 & 0 \\
c_{j} T\left(k_{j}\right)^{-2} e^{2 i t \theta\left(k_{j}\right)} & 0
\end{array}\right), \quad j \in \Delta_{k_{0}}^{+}.
\end{array}\right.\\
\underset{k={k}^{*}_{j}}{\operatorname{Res}} M^{(2)}(k)=\left\{\begin{array}{ll}
\underset{k \rightarrow {k}^{*}_{j}}\lim M^{(2)}(k)\left(\begin{array}{cc}
0 & 0 \\
-({c}^{*}_{j})^{-1} T^{\prime}({k}^{*}_{j})^{-2} e^{2 i t \theta({k}^{*}_{j})} & 0
\end{array}\right), \quad j \in \Delta_{k_{0}}^{-},\\
\underset{k \rightarrow {k}^{*}_{j}}\lim M^{(2)}(k)\left(\begin{array}{cc}
0 & -{c}^{*}_{j} T({k}^{*}_{j})^{2} e^{-2 i t \theta({k}^{*}_{j})} \\
0 & 0
\end{array}\right), \quad j \in \Delta_{k_{0}}^{+}.
\end{array}\right.
\end{array}
\end{equation}

$\bullet$  For $\mathbb{C}\backslash \Sigma^{(2)}\cup\mathcal{K}\cup\mathcal{K}^{*}$, we have the $\bar{\partial}$ derivative
\begin{equation}
\bar{\partial} M^{(2)}(k)=M^{(2)}(k) \bar{\partial} \mathcal{R}^{(2)}(k),\label{4.1}
\end{equation}
where
$$
\mathcal{R}^{(2)}(k)= \begin{cases}\left(\begin{array}{cc}
1 & 0 \\
(-1)^{m_j}\bar{\partial}R_{j}(k) e^{2 i t \theta} & 1
\end{array}\right), & k \in D_{ j},\quad j=1,5,7,8 \\
\left(\begin{array}{cc}
1 & (-1)^{m_j}\bar{\partial}R_{j}(k) e^{-2 i t \theta} \\
0 & 1
\end{array}\right), & k \in D_{j},\quad j=3,4,6,9 \\
\left(\begin{array}{ll}
0 & 0 \\
0 & 0
\end{array}\right), & k \in D_{2} \cup D_{10},\end{cases}
$$
where $m_1=m_3=m_4=m_5=1, m_6=m_7=m_8=m_9=0$.
\end{rhp}
The relationship between the solution of the coupled dispersion AB system and $M^{(2)}(k)$ is
\begin{equation}
A(x,t)= 4 i\lim _{k \rightarrow \infty} (k M^{(2)}(k))_{12},~~B(x,t)=-\frac{4 i}{\beta} \lim _{k \rightarrow \infty} \frac{d}{d t}(k M^{(2)}(k))_{11}.
\end{equation}
The $\bar{\partial}$ derivative appears in the above RHP, so it is also called mixed $\bar{\partial}$-$RHP$.

\section{Decomposition of the RHP \ref{5r}}
The following is mainly about the classification of RHP \ref{5r}. For the case of $\bar{\partial} \mathcal{R}^{(2)}(k)=0$, it is called a pure RH problem, and for the case of $\bar{\partial} \mathcal{R}^{(2)}(k)\neq0$, it is called a pure $\bar{\partial}$ problem. In the process of classification, consider the transformation $M^{(2)}(k)=M^{(3)}(k)M^{(2)}_{rhp}(k)$, if $\bar{\partial} \mathcal{R}^{(2)}(k)=0$, it corresponds to $M^{(2)}_{rhp}(k)$, if $\bar{\partial} \mathcal{R}^{(2)}(k)\neq0$, it corresponds to $M^{(3)}(k)=M^{(2)}(k)(M^{(2)}_{rhp}(k))^{-1}$. For $M^{(2)}_{rhp}(k)$, its jump is the same as $M^{(2)}(k)$, and its more properties are
\begin{rhp}\label{r6}
Find a matrix-valued function $M^{(2)}_{rhp}(k)$ with following properties:

$\bullet$ $M^{(2)}_{rhp}(k)$ is analytic within $\mathbb{C}\backslash \Sigma^{(2)}\cup\mathcal{K}\cup\mathcal{K}^{*};$

$\bullet$   Symmetry: $M^{(2)}_{rhp}(k)=\sigma_{0} (M^{(2)}_{rhp})^{*}(k^{*})\sigma_{0}^{-1};$

$\bullet$  Analytic behavior:
\begin{equation}M^{(2)}_{rhp}(k)=I+\mathcal{O}\left(k^{-1}\right), \quad k \rightarrow \infty,\end{equation}

$\bullet$  Jump condition: \begin{equation}
M^{(2)}_{rhp+}(k)=M^{(2)}_{rhp-}( k) V^{(2)}(k), \quad k \in \Sigma^{(2)};
\end{equation}
$\bullet$  Residue conditions: $M^{(2)}_{rhp}(k)$ has simple poles at each point in $\mathcal{K}\cup\mathcal{K}^{*}$ with
\begin{equation}
\begin{array}{l}
\underset{k=k_{j}}{\operatorname{Res}}M^{(2)}_{rhp}(k)=\left\{\begin{array}{ll}
\underset{k \rightarrow k_{j}}\lim M^{(2)}_{rhp}(k)\left(\begin{array}{cc}
0 & c_{j}^{-1}\left(\frac{1}{T}\right)^{\prime}\left(k_{j}\right)^{-2} e^{-2 i t \theta\left(k_{j}\right)} \\
0 & 0
\end{array}\right), \quad j \in \Delta_{k_{0}}^{-}, \\
\underset{k \rightarrow k_{j}}\lim M^{(2)}_{rhp}(k)\left(\begin{array}{cc}
0 & 0 \\
c_{j} T\left(k_{j}\right)^{-2} e^{2 i t \theta\left(k_{j}\right)} & 0
\end{array}\right), \quad j \in \Delta_{k_{0}}^{+}.
\end{array}\right.\\
\underset{k={k}^{*}_{j}}{\operatorname{Res}} M^{(2)}_{rhp}(k)=\left\{\begin{array}{ll}
\underset{k \rightarrow k^{*}_{j}}\lim M^{(2)}_{rhp}(k)\left(\begin{array}{cc}
0 & 0 \\
-\left({c}^{*}_{j}\right)^{-1} T^{\prime}\left({k}^{*}_{j}\right)^{-2} e^{2 i t \theta\left({k}^{*}_{j}\right)} & 0
\end{array}\right), \quad j \in \Delta_{k_{0}}^{-}, \\
\underset{k \rightarrow k^{*}_{j}}\lim  M^{(2)}_{rhp}(k)\left(\begin{array}{cc}
0 & -{c}^{*}_{j} T\left({k}^{*}_{j}\right)^{2} e^{-2 i t \theta\left({k}^{*}_{j}\right)} \\
0 & 0
\end{array}\right), \quad j \in \Delta_{k_{0}}^{+} .
\end{array}\right.\label{5.3}
\end{array}
\end{equation}

$\bullet$  $\bar{\partial}$-Derivative: $\bar{\partial} R^{(2)}(k)=0$ for $k \in \mathbb{C}$.

\end{rhp}
When $\bar{\partial} \mathcal{R}^{(2)}(k)\neq0$, we use the above $M^{(2)}_{rhp}(k)$ to construct a transformation: $M^{(3)}(k)=M^{(2)}(k)(M^{(2)}_{rhp}(k))^{-1}$, which is a  pure $\bar{\partial}$ problem. For $M^{(3)}(k)$, we have the following properties:
\begin{rhp}\label{7r}
Find a matrix-valued function $M^{(3)}(k)$ with following properties:

 $\bullet$  $M^{(3)}(k)$ is continuous in $\mathbb{C}$, continuous first partial derivatives in $\mathbb{C}\backslash \Sigma^{(2)}\cup\mathcal{K}\cup\mathcal{K}^{*};$

 $\bullet$   Symmetry: $M^{(3)}(k)=\sigma_{0} (M^{(3)}(k^{*}))^{*}\sigma_{0}^{-1};$

  $\bullet$   Jump condition: $M^{(3)}_{+}(k)=M^{(3)}_{-}(k) , \quad k \in \Sigma^{(2)}$;

$\bullet$  Analytic behavior: $M^{(3)}(k)=I+\mathcal{O}\left(k^{-1}\right), \quad k \rightarrow \infty$;

$\bullet$  $\bar{\partial}$-Derivative: $\bar{\partial} M^{(3)}(k)=M^{(3)}(k)  M^{(2)}_{rhp}(k) \bar{\partial} R^{(2)}(k) M^{(2)}_{rhp}(k)^{-1}.$

\end{rhp}
For the above proof, we can refer to Ref.\cite{YT-2021-ar}. Here we avoid repetition and do not present it again.

The next focus is to find $M^{(2)}_{rhp}(k)$. First, two open intervals are defined
\begin{equation}
\begin{aligned}
&\mathcal{A}_{1}=\left\{k:\left|k+k_0\right| \leq \min \left\{\frac{k_0}{2}, \rho / 3\right\} \triangleq \varepsilon\right\}, \\
&\mathcal{A}_{2}=\left\{k:\left|k-k_0\right| \leq \min \left\{\frac{k_0}{2}, \rho / 3\right\} \triangleq \varepsilon\right\}.
\end{aligned}
\end{equation}
Then $M^{(2)}_{rhp}(k)$ is divided into three parts
\begin{equation}
M^{(2)}_{rhp}(k)= \begin{cases}M^{err}(k) M^{o u t}(k), & k \notin\left\{\mathcal{A}_{1} \cup \mathcal{A}_{2}\right\}, \\ M^{err}(k) M^{\left(-k_0\right)}(k), & k \in \mathcal{A}_{1}, \\ M^{err}(k) M^{\left(k_0\right)}(k), & k \in \mathcal{A}_{2}.\end{cases}\label{mout}
\end{equation}
Due to $\operatorname{dist}(\mathcal{K} \cup \mathcal{K}^{*}, \mathbb{R})>\varrho$, $M^{(2)}_{rhp}(k)$ , $M^{\left(-k_0\right)}(k)$ and $M^{\left(k_0\right)}(k)$ have no poles in $\mathcal{A}_{1}$ and $\mathcal{A}_{2}$. The matrix $M^{(2)}_{rhp}(k)$  is divided into three parts by decomposition: one part can be called the external model RH problem, represented by $M^{o u t}(k)$, which can be solved directly by considering the standard RH problem without reflection potential. The other two parts are near the phase points $M^{\left(-k_0\right)}(k)$ and $M^{\left(k_0\right)}(k)$ , which can be matched to the known PC model, namely the parabolic cylinder model in $\mathcal{A}_{1}$ and $\mathcal{A}_{2}$, to be solved in Section 8.4. In addition, matrix $M^{err}(k)$ is an error function, which can be solved by the small norm RH problem in Section 8.5.

Let's define  $L_{\epsilon}$ for a fixed $\epsilon$
$$
\begin{gathered}
L_{\epsilon}=\left\{k: k=k_0+A k_0 e^{\frac{3 i \pi}{4} }, \epsilon \leq A \leq \frac{1}{\sqrt{2}}\right\} \\
\cup\left\{k: k=k_0+A k_0 e^{\frac{i \pi}{4}}, \epsilon \leq A \leq \infty\right\} \\
\cup\left\{k: k=-k_0+A k_0 e^{\frac{i \pi}{4} }, \epsilon \leq A \leq \frac{1}{\sqrt{2}}\right\} \\
\cup\left\{k: k=-k_0+A k_0  e^{\frac{3 i \pi}{4} }, \epsilon \leq A \leq \infty\right\}.
\end{gathered}
$$
Through the above definition, we can write the estimation of $V^{(2)}(k)$ as follows
\begin{equation}
\begin{aligned}\label{VI}
&\left\|V^{(2)}(k)-I\right\|_{L^{\infty}\left(\Sigma_{+}^{(2)} \cap \mathcal{A}_{2}\right)}=\mathcal{O}\left(e^{\frac{\sqrt{2}t}{4}\alpha\left|k -k_0\right|}\left(k_{0}^{-2}-|k|^{-2}\right)\right), \\
&\left\|V^{(2)}(k)-I\right\|_{L^{\infty}\left(\Sigma_{-}^{(2)} \cap \mathcal{A}_{1}\right)}=\mathcal{O}\left(e^{\frac{\sqrt{2}t}{4}\alpha\left|k +k_0\right|}\left(k_{0}^{-2}-|k|^{-2}\right)\right), \\
&\left\|V^{(2)}(k)-I\right\|_{L^{\infty}\left(\Sigma^{(2)} \backslash\left(\mathcal{A}_{1} \cup \mathcal{A}_{2}\right)\right)}=O\left(e^{-2 t \epsilon}\right),
\end{aligned}
\end{equation}
where the contours are defined by
$$
\Sigma_{+}^{(2)}=\Sigma_{3} \cup  \Sigma_{4} \cup \Sigma_{5} \cup \Sigma_{6}, \quad \Sigma_{-}^{(2)}=\Sigma_{1} \cup \Sigma_{2} \cup \Sigma_{7} \cup \Sigma_{8}.
$$
This  implies that the jump matrix $V^{(2)}(k)$ goes to $I$ on both $\Sigma^{(2)} \backslash\left(\mathcal{A}_{1} \cup \mathcal{A}_{2}\right)$.

$M^{o u t}(k)$ is the solution on the soliton region of $M^{(2)}(k)$, which is defined as no jump on $\mathbb{C}$, only discrete spectral points, and analytical in $\mathcal{A}_{1}\cup\mathcal{A}_{2}$ and outside discrete spectral points, that is, the following RHP
\begin{rhp}\label{8r}
Find a matrix-valued function $M^{out}(k)$ with following properties:

 $\bullet$  Analyticity:  $M^{out}(k)$ is  is analytical in $\mathbb{C}\backslash \Sigma^{(2)}\cup\mathcal{K}\cup\mathcal{K}^{*};$

 $\bullet$   Symmetry: $M^{out}(k)=\sigma_{0} (M^{out}(k^{*}))^{*}\sigma_{0}^{-1};$

$\bullet$  Analytic behavior: $M^{out}(k)=I+\mathcal{O}\left(k^{-1}\right), \quad k \rightarrow \infty$,

$\bullet$  Residue conditions:~$M^{out}(k)$ has simple poles at each point in $\mathcal{K}\cup\mathcal{K}^{*}$ satisfying the
same residue relations with (\ref{5.3}) of $M^{(2)}_{rhp}{(k)}$.
\end{rhp}
\section{Pure RHP and its asymptotic behavior}
In this segment, we will probe into the asymptotic behavior of the external soliton region and the internal non soliton region.
\subsection{External soliton solution region}
As we all know, solitons are generated when the reflection data is equal to 0, that is, $r(k) = 0$. At this time, the trace function is simplified to
\begin{equation}
s_{11}(k)=\prod_{j=1}^{N} \frac{k-k_{j}}{k-{k}^{*}_{j}},~~~s_{22}(k)=\prod_{j=1}^{N} \frac{k-{k}^{*}_{j}}{k-k_{j}}.
\end{equation}
And the jump matrix $V (k)=I$, then RHP \ref{r1} can be simplified to
\begin{rhp}\label{9r}
For a given scattering data $\mathcal{P}=\left\{\left(k_{j}, c_{j}\right)\right\}_{j=1}^{N}$, a matrix $M(k|\mathcal{P})$ can be found to satisfy:

$\bullet$  Analyticity: $M(k|\mathcal{P})$ is analytical in $\mathbb{C} \backslash\left(\Sigma^{(2)} \cup \mathcal{K} \cup \mathcal{K}^{*}\right)$;

$\bullet$   Symmetry: $M(k|\mathcal{P})=\sigma_{0} M^{*}(k^{*}|\mathcal{P})\sigma_{0}^{-1};$

$\bullet$   Asymptotic behaviors:
$M(k|\mathcal{P})=I+\mathcal{O}\left(k^{-1}\right), \quad k \rightarrow \infty,$

$\bullet$  Residue conditions: $M(k|\mathcal{P})$ has simple poles at each point in $k_j\in \mathbb{C}^{+}$ and $k_j^{*}\in \mathbb{C}^{-}$ with:
\begin{equation}
\begin{array}{l}
\underset{k=k_{j}}{\operatorname{Res}}M(k|\mathcal{P})=\underset{k \rightarrow k_{j}}\lim M(k|\mathcal{P})\left(\begin{array}{cc}
0 & 0 \\
c_{j} e^{2 i t \theta\left(k_{j}\right)} & 0
\end{array}\right), \\
\underset{k={k}_{j}^{*}}{\operatorname{Res}}M(k|\mathcal{P})=\underset{k \rightarrow k^{*}_{j}}\lim M(k|\mathcal{P})\left(\begin{array}{cc}
0 & -{c}_{j}^{*} e^{-2 i t \theta\left(k_j^{*}\right)} \\
0 & 0
\end{array}\right).
\end{array}
\end{equation}
\end{rhp}
The uniqueness of RHP \ref{9r} solution can be easily proved by using Liouville theorem.

In order to facilitate future research, we divide the scattering data into two parts. Note $\nabla\subseteq\{1,...,N\}$, and define
$$
{s_{11}}_{\nabla}(k)=\prod_{j\in\nabla} \frac{k-k_{j}}{k-{k}^{*}_{j}}.
$$
Next, make a modified transformation of $M(k|\mathcal{P})$ defined above as follows
\begin{equation}
M_{\nabla}(k \mid \mathcal{D})=M(k \mid \mathcal{P}) s_{11,\nabla}(k)^{\sigma_{3}},\label{ms}
\end{equation}
where the scattering data are given by
$$
\mathcal{D}=\left\{\left(k_{j}, c_{j}^{\prime}\right)\right\}_{j=1}^{ N}, \quad c_{j}^{\prime}=c_{j} s_{11,\nabla}(k)^{2}.
$$
Therefore, $M_{\nabla}(k \mid \mathcal{D})$ satisfies the following modified discrete RHP:
\begin{rhp}\label{10r}
For a given scattering data $\mathcal{D}=\{(k_{j}, c_{j}^{\prime})\}_{j=1}^{N}$, a matrix $M_{\nabla}(k \mid \mathcal{D})$ can be found to satisfy:

$\bullet$  Analyticity: $M^{\nabla}(k \mid \mathcal{D})$ is analytical in $\mathbb{C} \backslash\left(\Sigma^{(2)} \cup \mathcal{K} \cup \mathcal{K}^{*}\right)$;

$\bullet$   Symmetry: $M^{\nabla}(k \mid \mathcal{D})=\sigma_{0} M^{*}_{\nabla}(k^{*} \mid \mathcal{D})\sigma_{0}^{-1};$

$\bullet$   Asymptotic behaviors:
$
M_{\nabla}(k \mid \mathcal{D})=I+\mathcal{O}\left(k^{-1}\right), \quad k \rightarrow \infty;
$

$\bullet$  Residue conditions: $M_{\nabla}(k \mid \mathcal{D})$ has simple poles at each point in $k_j\in \mathbb{C}^{+}$ and $k_j^{*}\in \mathbb{C}^{-}$ with:
\begin{equation}\label{91}
\begin{array}{l}
\underset{k=k_{j}}{\operatorname{Res}} M_{\nabla}(k \mid \mathcal{D})=\left\{\begin{array}{ll}
\underset{k \rightarrow k_{j}}\lim M_{\nabla}(k \mid \mathcal{D})\left(\begin{array}{cc}
0 & \varpi_j^{-1}(s_{11,\nabla}^{\prime})^{-2}(k_{j})\\
0 & 0
\end{array}\right), j \in \nabla ,\\
\underset{k \rightarrow k_{j}}\lim M_{\nabla}(k \mid \mathcal{D})\left(\begin{array}{cc}
0 & 0 \\
\varpi_js_{11,\nabla}^2(k_j) & 0
\end{array}\right), \quad j \notin \nabla.
\end{array}\right.\\
\underset{k={k}^{*}_{j}}{\operatorname{Res}} M_{\nabla}(k \mid \mathcal{D})=\left\{\begin{array}{ll}
\underset{k \rightarrow {k}^{*}_{j}} \lim M_{\nabla}(k \mid \mathcal{D})\left(\begin{array}{cc}
0 & 0 \\
-\varpi_j^{*-1}(s^{*,\prime}_{11,\nabla})^{-2}(k^{*}_j) & 0
\end{array}\right), j \in \nabla, \\
\underset{k \rightarrow {k}^{*}_{j}}\lim M_{\nabla}(k \mid \mathcal{D})\left(\begin{array}{cc}
0 &-\varpi_j^{*} (s_{11,\nabla}^{*})^{2}(k^{*}_{j}) \\
0 & 0
\end{array}\right), j \notin \nabla .
\end{array}\right.
\end{array}
\end{equation}
where $\varpi_j=c_{j} e^{2 i t \theta\left(k_{j}\right)}$.
\end{rhp}

\begin{proposition}
If $ A_{s o l}(x,t)=A_{s o l}\left(x, t |\mathcal{D}\right), B_{s o l}(x,t)=B_{s o l}\left(x, t |\mathcal{D}\right) $ denote the N-soliton solution of the system (\ref{AB}), for the scattering data without reflection $\mathcal{D}=\{(k_{j}, c_{j}^{\prime})\}_{j=1}^{N}$, RHP \ref{10r} has a unique solution and
$$
\begin{aligned}
A_{s o l}\left(x, t |\mathcal{D}\right)&=4 i \lim _{k \rightarrow \infty}\left[k M_{\nabla}(k \mid \mathcal{D})\right]_{12}\\
&=4 i \lim _{k \rightarrow \infty}\left[k M(k \mid \mathcal{P})\right]_{12}=A_{s o l}\left(x,t|\mathcal{P}\right),
\end{aligned}
$$
$$
\begin{aligned}
B_{s o l}(x, t|\mathcal{D})&=-\frac{4 i}{\beta} \lim _{k \rightarrow \infty} \frac{d}{d t}[k M_{\nabla}(k \mid \mathcal{D})]_{11}
\\
&=-\frac{4 i}{\beta} \lim _{k \rightarrow \infty} \frac{d}{d t}[k M(k \mid \mathcal{P})]_{11}=B_{s o l}(x, t|\mathcal{P}).
\end{aligned}
$$
\end{proposition}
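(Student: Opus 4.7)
The strategy is to verify that the diagonal gauge $M_\nabla(k\mid\mathcal{D}) = M(k\mid\mathcal{P})\, s_{11,\nabla}(k)^{\sigma_3}$ carries RHP \ref{9r} into RHP \ref{10r}, and then to read off the potentials from the $k\to\infty$ expansion, using the key fact that $s_{11,\nabla}(k)=1+c_0/k+O(k^{-2})$ with $c_0$ independent of $t$. Uniqueness for both Riemann--Hilbert problems follows from a standard Liouville argument: the ratio of two candidate solutions is entire (jumps, residues, and symmetry all match), bounded, and tends to $I$ at $\infty$, hence equals $I$.

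\emph{Step 1: the factorization solves RHP \ref{10r}.} Because $s_{11,\nabla}^{\sigma_3}$ is diagonal and meromorphic, analyticity away from $\mathcal{K}\cup\mathcal{K}^*$, the $\sigma_0$-symmetry, and the normalization $M_\nabla=I+O(k^{-1})$ transfer to $M_\nabla$ immediately. The delicate point is the residue structure at $k_j$ for $j\in\nabla$: there $s_{11,\nabla}$ has a simple zero, so right-multiplication by $s_{11,\nabla}^{\sigma_3}$ kills the pole of the first column of $M$ and simultaneously produces a new pole in the second column of $M_\nabla$ through the factor $s_{11,\nabla}^{-1}$. A local expansion $s_{11,\nabla}(k) = s_{11,\nabla}'(k_j)(k-k_j)+O((k-k_j)^2)$, combined with the old residue condition $[M]_{\cdot 1}(k) \sim \varpi_j\,[M]_{\cdot 2}(k_j)/(k-k_j)$ with $\varpi_j=c_j e^{2it\theta(k_j)}$, then produces precisely the nilpotent residue matrix with prefactor $\varpi_j^{-1}(s_{11,\nabla}'(k_j))^{-2}$ in the upper-right entry, matching the first line of (\ref{91}). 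For $j\notin\nabla$, $s_{11,\nabla}$ is analytic and nonzero at $k_j$, so the pole is preserved and $c_j$ is merely renormalized to $c_j s_{11,\nabla}(k_j)^2$. The computations at $k_j^*$ mirror these via the symmetry.

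\emph{Step 2: matching the reconstruction formulas.} A direct expansion gives
\[
s_{11,\nabla}(k) = \prod_{j\in\nabla}\frac{k-k_j}{k-k_j^*} = 1 - \frac{2i}{k}\sum_{j\in\nabla}\mathrm{Im}\,k_j + O(k^{-2}),
\]
so both $s_{11,\nabla}$ and $s_{11,\nabla}^{-1}$ tend to $1$ at $\infty$ with a $t$-independent subleading coefficient. For the $(1,2)$-entry,
\[
\lim_{k\to\infty} k\,[M_\nabla(k\mid\mathcal{D})]_{12} = \Bigl(\lim_{k\to\infty} k\,[M(k\mid\mathcal{P})]_{12}\Bigr)\cdot\lim_{k\to\infty} s_{11,\nabla}^{-1}(k) = \lim_{k\to\infty} k\,[M(k\mid\mathcal{P})]_{12},
\]
which yields the identity for $A_{sol}$. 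Writing $[M]_{11}=1+m_{11}^{1}/k+O(k^{-2})$ and multiplying by $s_{11,\nabla}$ gives $[M_\nabla]_{11}=1+(m_{11}^{1}+c_0)/k+O(k^{-2})$ with $c_0=-2i\sum_{j\in\nabla}\mathrm{Im}\,k_j$ independent of $t$; hence $\tfrac{d}{dt}(kM_\nabla)_{11}\to \dot m_{11}^{1} = \lim_{k\to\infty}\tfrac{d}{dt}(kM)_{11}$, delivering the identity for $B_{sol}$.

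\emph{Main obstacle.} The one place that requires honest verification is the bookkeeping in Step~1 for $j\in\nabla$: one must see that the simple pole of $[M]_{\cdot 1}$ at $k_j$ is exactly cancelled by the zero of $s_{11,\nabla}$, while the pole simultaneously created in $[M_\nabla]_{\cdot 2}$ by the factor $s_{11,\nabla}^{-1}$ carries precisely the nilpotent residue written in (\ref{91}). The cancellation hinges on the algebraic identity $\varpi_j\cdot s_{11,\nabla}'(k_j)\cdot \varpi_j^{-1}(s_{11,\nabla}'(k_j))^{-2} = (s_{11,\nabla}'(k_j))^{-1}$ matching the direct computation $\mathrm{Res}_{k=k_j}[M_\nabla]_{\cdot 2}=[M]_{\cdot 2}(k_j)/s_{11,\nabla}'(k_j)$; once this is in place, everything else is a transcription and the limits in Step~2 are immediate.
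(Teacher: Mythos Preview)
Your argument is correct and follows exactly the approach implicit in the paper, which states the proposition without an explicit proof and relies on the transformation (\ref{ms}) together with the Liouville uniqueness argument already invoked for RHP~\ref{9r}. Your Step~2 observation that the subleading coefficient $c_0=-2i\sum_{j\in\nabla}\mathrm{Im}\,k_j$ is $t$-independent is precisely what makes the $B_{sol}$ identity go through, and the residue bookkeeping in Step~1 correctly fills in the details the paper leaves to the reader.
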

\subsection{Existence and uniqueness of solutions for external RH problems}

We see that $M^{out}(k)$ is a reflection soliton solution, and the reflection mainly comes from $T(k)$. In order to connect $M^{out}(x,t,k)$ with the case of non reflection scattering $\mathcal{D}=\{(k_{j}, c_{j}^{\prime})\}_{j=1}^{N}$, an idea is to take $\nabla=\triangle_{k_0}^{-}$ in Eq.(\ref{ms}).  So there are
$$
\begin{aligned}
&T(k)=\prod_{j \in \Delta^{-}_{k_{0}}} \frac{k-k^{*}_{j}}{k-k_{j}} \exp \left(i \int_{-k_0}^{k_0} \frac{\nu(s)}{s-k} d k\right)=s_{11,\Delta^{-}_{k_{0}}} \delta(k),\\
&T\left(k_{j}\right)^{-2}=s_{11,\Delta^{-}_{k_{0}}}(k_{j})^{2} \delta(k_{j})^{-2}, \quad(\frac{1}{T}  )^{\prime}\left(k_{j}\right)^{-2}=s_{11,\Delta^{-}_{k_{0}}}^{\prime}\left(k_{j}\right)^{-2} \delta\left(k_{j}\right)^{2}.
\end{aligned}
$$
The scattering data can be written as
\begin{equation}\label{db}
\widetilde{\mathcal{D}}=\left\{\left(k_{j}, \widetilde{c}_{j}\right)\right\}_{j=1}^{N}, \quad \widetilde{c}_{j}= \begin{cases}c_{j}^{-1} s_{11,\Delta_{k^{-}_{0}}}^{\prime}\left(k_{j}\right)^{-2} \delta\left(k_{j}\right)^{2}, & j \in \Delta_{k_{0}}^{-} \\ c_{j} s_{11,\Delta_{k^{-}_{0}}}\left(k_{j}\right)^{2} \delta\left(k_{j}\right)^{-2}, & j \notin \Delta_{k_{0}}^{-}\end{cases}.
\end{equation}
Therefore, RHP \ref{10r} is rewritten as
\begin{rhp}\label{11r}
For a given scattering data $\mathcal{\tilde{D}}=\left\{\left(k_{j}, \tilde{c}_{j}\right)\right\}_{j=1}^{N}$, a matrix $M_{\Delta_{k_{0}}^{-}}(k \mid \mathcal{\tilde{D}})$ can be found to satisfy:

$\bullet$  $M_{\Delta_{k_{0}}^{-}}(k \mid \mathcal{\tilde{D}})$  is analytical in $\mathbb{C} \backslash\left( \mathcal{K} \cup \mathcal{K}^{*}\right)$;

$\bullet$  $M_{\Delta_{k_{0}}^{-}}(k \mid \mathcal{\tilde{D}})=\sigma_{0} M^{*}_{\Delta_{k_{0}}^{-}}(k^{*} \mid \mathcal{\tilde{D}})\sigma_{0}^{-1};$

$\bullet$  $M_{\Delta_{k_{0}}^{-}}(k \mid \mathcal{\tilde{D}})=I+\mathcal{O}\left(k^{-1}\right), \quad k \rightarrow \infty$;

$\bullet$  $M_{\Delta_{k_{0}}^{-}}(k \mid \mathcal{\tilde{D}})$ has simple poles at each point in $k_j\in \mathbb{C}^{+}$ and $k_j^{*}\in \mathbb{C}^{-}$ with:

\begin{equation}\label{r101}
\begin{array}{l}
\underset{k=k_{j}}{\operatorname{Res}} M_{\Delta_{k_{0}}^{-}}(k \mid \mathcal{\tilde{D}})=\left\{\begin{array}{ll}
\underset{k \rightarrow k_{j}}\lim M_{\Delta_{k_{0}}^{-}}(k \mid \mathcal{\tilde{D}})\left(\begin{array}{cc}
0 & \Lambda^{-1}_j s_{11,\Delta_{k^{-}_{0}}}^{\prime-2}(k_j) \\
0 & 0
\end{array}\right) , j \in \Delta_{k_{0}}^{-},  \\
\underset{k \rightarrow k_{j}}\lim M_{\Delta_{k_{0}}^{-}}(k \mid \mathcal{\tilde{D}})\left(\begin{array}{cc}
0 & 0 \\
\Lambda_j s_{11,\Delta_{k^{-}_{0}}}^{2}(k_j)& 0
\end{array}\right),  j \notin \Delta_{k_{0}}^{-},
\end{array}\right.\\
\underset{k={k}^{*}_{j}}{\operatorname{Res}} M_{\Delta_{k_{0}}^{-}}(k \mid \mathcal{\tilde{D}})=\left\{\begin{array}{ll}
\underset{k \rightarrow {k}^{*}_{j}}\lim M_{\Delta_{k_{0}}^{-}}(k \mid \mathcal{\tilde{D}})\left(\begin{array}{cc}
0 & 0 \\
-\Lambda^{*-1}_j s^{*\prime-2}_{11,\Delta_{k_{0}}^{-}}(k^{*}_j) & 0
\end{array}\right),  j \in \Delta_{k_{0}}^{-},  \\
\underset{k \rightarrow {k}^{*}_{j}}\lim  M_{\Delta_{k_{0}}^{-}}(k \mid \mathcal{\tilde{D}})\left(\begin{array}{cc}
0 & -\Lambda_j s_{11,\Delta_{k_{0}}^{-}}^{*^-2}(k^{*}_{j})\\
0 & 0
\end{array}\right), j \notin \Delta_{k_{0}}^{-}.
\end{array}\right.
\end{array}
\end{equation}
where $\Lambda_j=c_{j}\delta^{-2} e^{2 i t \theta\left(k_{j}\right)}$.
\end{rhp}
So we have the following corollary
\begin{corollary}
There exists and unique solution for the RHP \ref{8r}, moreover
$$
M^{out}(x,t,k)=M_{\Delta_{k_{0}}^{-}}(x,t,k \mid \tilde{\mathcal{D}}),
$$
where scattering data $\tilde{\mathcal{D}}$ is given by (\ref{db}) and
$$
\begin{aligned}
A_{sol}\left(x, t|\mathcal{D}\right)&=4 i \lim _{k \rightarrow \infty}\left[k M^{out}(x,t,k \mid \mathcal{D})\right]_{12}\\
&=4 i \lim _{k \rightarrow \infty}\left[k M_{\Delta_{k_{0}}^{-}}(x,t,k \mid \tilde{\mathcal{D}})\right]_{12}=A_{s o l}\left(x, t|\tilde{\mathcal{D}}\right),\\
B_{sol}\left(x, t|\mathcal{D}\right)&=-\frac{4 i}{\beta} \lim _{k \rightarrow \infty} \frac{d}{d t}[k M^{out}(x,t,k \mid \mathcal{D})]_{11}\\
&=-\frac{4 i}{\beta} \lim _{k \rightarrow \infty} \frac{d}{d t}[k M_{\Delta_{k_{0}}^{-}}(x,t,k \mid \tilde{\mathcal{D}})]_{11}=B_{s o l}\left(x, t|\tilde{\mathcal{D}}\right).
\end{aligned}
$$
\end{corollary}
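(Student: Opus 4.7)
The plan is to deduce the corollary by exhibiting $M_{\Delta_{k_0}^{-}}(x,t,k\mid\tilde{\mathcal{D}})$, with the data $\tilde{\mathcal{D}}$ given by (\ref{db}), as a solution of RHP \ref{8r} and then invoking a Liouville-type uniqueness for this no-jump discrete problem. First I would verify that the candidate satisfies the non-residue clauses of RHP \ref{8r}: analyticity on $\mathbb{C}\setminus(\mathcal{K}\cup\mathcal{K}^{*})$, the Schwartz symmetry $M=\sigma_0 M^{*}(k^{*})\sigma_0^{-1}$, and the normalization $I+\mathcal{O}(k^{-1})$ at infinity are all inherited directly from the corresponding clauses of RHP \ref{11r}. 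Hence the real content of the argument lies in matching the residue conditions (\ref{5.3}) of RHP \ref{8r} (imported from RHP \ref{r6}) with the residue formulas (\ref{r101}) of RHP \ref{11r} after the latter is specialised to $\tilde{\mathcal{D}}$.

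The residue match rests on the factorisation $T(k)=s_{11,\Delta_{k_0}^{-}}(k)\,\delta(k)$ from (\ref{tk}), combined with the fact that $\delta$ is analytic and non-vanishing on $\mathbb{C}\setminus[-k_0,k_0]$, so $\delta(k_j)$ and $\delta(k_j^{*})$ are well defined and non-zero for every $j$. For $j\notin\Delta_{k_0}^{-}$, the rational factor $s_{11,\Delta_{k_0}^{-}}$ is holomorphic and non-zero at $k_j$, yielding $T(k_j)^{-2}=s_{11,\Delta_{k_0}^{-}}(k_j)^{2}\,\delta(k_j)^{-2}$; for $j\in\Delta_{k_0}^{-}$, the factor $s_{11,\Delta_{k_0}^{-}}$ has a simple zero at $k_j$, so $(1/T)'(k_j)^{-2}=s'_{11,\Delta_{k_0}^{-}}(k_j)^{-2}\,\delta(k_j)^{2}$. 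Substituting these identities into (\ref{5.3}) and replacing $c_j$ by $\tilde{c}_j$ as prescribed in (\ref{db}), each branch of the residue formulas of RHP \ref{8r} collapses precisely onto the corresponding case of (\ref{r101}). The conjugate residues at $k_j^{*}$ match by the same calculation combined with $\overline{\delta(k^{*})}=\delta(k)^{-1}$ and the Schwartz reflection symmetry already used throughout Section 2.

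Uniqueness then follows from a Liouville argument applied to the quotient $M_1 M_2^{-1}$ of any two solutions of RHP \ref{8r}: the nilpotent residue structure cancels at every pole, so the quotient is entire, uniformly bounded, and equal to $I$ at infinity, hence $M_1=M_2$. The reconstruction identities for $A_{sol}$ and $B_{sol}$ then drop out immediately: since $M^{out}=M_{\Delta_{k_0}^{-}}(\cdot\mid\tilde{\mathcal{D}})$, their $k^{-1}$ coefficients at infinity coincide, and the soliton reconstruction formulas already recorded in the proposition following RHP \ref{10r} give $A_{sol}(x,t\mid\mathcal{D})=A_{sol}(x,t\mid\tilde{\mathcal{D}})$, together with the analogous statement for $B_{sol}$ obtained from the $(1,1)$ entry. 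The main obstacle I expect is purely bookkeeping: the cases $j\in\Delta_{k_0}^{-}$ and $j\notin\Delta_{k_0}^{-}$ swap the roles of the upper-right and lower-left entries of the nilpotent residue matrix, and one must be careful that the $\delta(k_j)^{\pm 2}$ factors are absorbed into $\tilde{c}_j$ with the correct signs in the oscillatory exponentials.
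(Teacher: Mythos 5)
Your proposal matches the paper's (implicit) argument exactly: the paper derives this corollary from the same factorization $T=s_{11,\Delta_{k_0}^{-}}\delta$, the same residue identities $T(k_j)^{-2}=s_{11,\Delta_{k_0}^{-}}(k_j)^{2}\delta(k_j)^{-2}$ and $(1/T)'(k_j)^{-2}=s'_{11,\Delta_{k_0}^{-}}(k_j)^{-2}\delta(k_j)^{2}$, and the same Liouville-type uniqueness already invoked for the reflectionless problem. The only blemish, which you inherit verbatim from the paper, is that with the stated definition $s_{11,\nabla}(k)=\prod_{j\in\nabla}(k-k_j)/(k-k_j^{*})$ the product appearing in (\ref{tk}) is actually $s_{11,\Delta_{k_0}^{-}}(k)^{-1}$, so the literal equation $T=s_{11,\Delta_{k_0}^{-}}\delta$ is off by an inverse even though the residue identities you then use (and which the paper states) are the correct ones.
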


 \subsection{The long-time behavior of soliton solution}
This section mainly considers the asymptotic behavior of soliton solutions. On the basis of  the residue condition (\ref{r101}), let $N=1,k=\zeta+i\eta$, under the condition of no scattering, the soliton solution of the coupled
dispersion AB system is
\begin{equation}
\begin{aligned}
&A(x,t)=4\eta sech(2\eta(x+\frac{\alpha}{4|k|^2}t))e^{i\zeta(-2x+\frac{\alpha}{2|k|^2}t)},\\
&B(x,t)=-\frac{2\alpha\eta^2}{\beta|k|^2}sech(\eta(2x+\frac{\alpha}{2|k|^2}t))^2.
\end{aligned}
\end{equation}
It can be seen that the velocity of soliton solution is $v=-\frac{\alpha}{4|k|^2}.$
Suppose $x_1 < x_2$ with $x_1,x_2\in \mathbb{R}$ and $v_1 < v_2$ with $v_1,v_2\in \mathbb{R}^{-}$, a conical region can be defined as
$$
\mathcal{C}\left(x_{1}, x_{2}, v_{1}, v_{2}\right)=\left\{(x, t) \in \mathbb{R}^{2} \mid x=x_{0}+v t, x_{0} \in\left[x_{1}, x_{2}\right], v \in\left[v_{1}, v_{2}\right]\right\}.
$$

\begin{figure}
{\includegraphics[width=9.2cm,height=5cm]{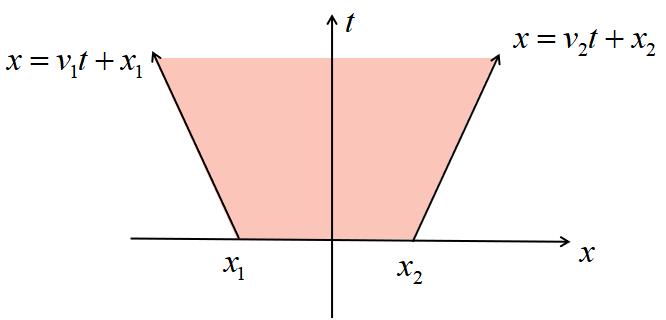}}
\centering
\caption{\small The cone $\mathcal{C}(x_1, x_2, v_1, v_2).$}\label{9t}
\end{figure}

Then make the following marks
\begin{equation}
\begin{aligned}
&\mathcal{I}=\left\{k: f\left(v_{2}\right)<|k|<f\left(v_{1}\right)\right\}, \quad f(v) \doteq\left(-\frac{\alpha}{4v}\right)^{\frac{1}{2}}, \\
&\mathcal{K}(\mathcal{I})=\left\{k_{j} \in \mathcal{K}: k_{j} \in \mathcal{I}\right\}, \quad N(\mathcal{I})=|\mathcal{K}(\mathcal{I})|, \\
&\mathcal{K}^{+}(\mathcal{I})=\left\{k_{j} \in \mathcal{K}:\left|k_{j}\right|>f\left(v_{1}\right)\right\}, \\
&\mathcal{K}^{-}(\mathcal{I})=\left\{k_{j} \in \mathcal{K}:\left|k_{j}\right|<f\left(v_{2}\right)\right\}, \\
&c_{j}^{\pm}(\mathcal{I})=c_{j} \prod_{\operatorname{Re} k_{n} \in I_{\pm} \backslash \mathcal{I}}\left(\frac{k_{j}-k_{n}}{k_{j}-k^{*}_{n}}\right)^{2} \exp \left[\pm \frac{1}{\pi i} \int_{I_{\pm}} \frac{\log [1+r(\varsigma) {r^{*}({\varsigma}^{*})]}}{\varsigma-k_{j}} d \varsigma\right].
\end{aligned}
\end{equation}
\begin{figure}[htpb]
\centering
{\includegraphics[width=7.5cm,height=5cm]{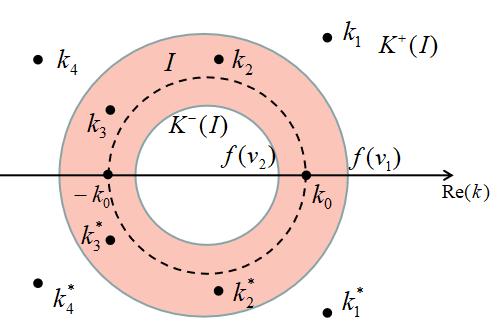}}

~~~~~~~~~~~~\caption{\small  Here, the original data has eight pairs zero points of discrete
spectrum, but insider the cone $\mathcal{C}$ only two pairs points with $\mathcal{K}(\mathcal{I})=\left\{k_{j} \in \mathcal{K}: k_{j} \in \mathcal{I}\right\},j=2,3$.}\label{t10}
\end{figure}

\begin{proposition}
Let $ \nabla=\Delta_{k_{0}}^{\mp}$ in $\mathcal{C}\left(x_{1}, x_{2}, v_{1}, v_{2}\right)$ on RHP \ref{10r}, and ensure the following estimation when $t\rightarrow\infty$:
\begin{equation}
\left\|\Theta_{j}^{\Delta^\mp_{k_{0}}}\right\|=\left\{\begin{array}{ll}
\mathcal{O}(1), & k_{j} \in \mathcal{K}(\mathcal{I}), \\
\mathcal{O}\left(e^{-2 \mu t}\right), & k_{j} \in \mathcal{K} \backslash \mathcal{K}(\mathcal{I}),
\end{array} \quad t \rightarrow \infty\right.
\end{equation}

where $$\mu=\min _{k_{j} \in \mathcal{K} \backslash \mathcal{K}(\mathcal{I})}\left\{\operatorname{Im} k_{j}\operatorname{dist}\left(v_{k_{j}}-v\right)\right\},~~
v_j=-\frac{\alpha}{4|k_j|^2},
$$
and in the formula, $v_j-v$ represents the difference between the speed of a soliton and the speed corresponding to $k$.
\end{proposition}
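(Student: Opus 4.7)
The plan is to read off $\Theta_j^{\Delta^\mp_{k_0}}$ directly from the residue conditions in RHP \ref{10r} and reduce its norm to a sharp estimate of the exponential $|e^{\pm 2 i t \theta(k_j)}|$. The algebraic prefactors $s_{11,\nabla}^{\pm 2}(k_j)$ and $(s_{11,\nabla}')^{\mp 2}(k_j)$ are finite Blaschke-type products evaluated at a fixed finite set of points lying off $\mathbb{R}$, so they contribute only a uniform $\mathcal{O}(1)$ constant on the cone and may be absorbed into the implied constant.

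First I compute the real part of the phase. Writing $k_j = \xi_j + i\eta_j$ with $\eta_j > 0$ and using $\theta(k) = kx/t - \alpha/(4k)$, a direct calculation gives
\begin{equation*}
\operatorname{Re}\bigl(2 i t \theta(k_j)\bigr) = -2\eta_j\bigl(x - v_j t\bigr), \qquad v_j = -\frac{\alpha}{4|k_j|^2}.
\end{equation*}
Parametrizing the cone by $x = x_0 + v t$ with $x_0 \in [x_1,x_2]$ and $v \in [v_1,v_2]$ rewrites this as $-2\eta_j\bigl(x_0 + (v - v_j)t\bigr)$, which exposes the full dependence on the observation direction.

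The dichotomy is then immediate. If $k_j \in \mathcal{K}(\mathcal{I})$ then $v_j \in (v_1, v_2)$, so $v - v_j$ can vanish; however, the sign $\mp$ in $\Delta^\mp_{k_0}$ is chosen precisely so that the exponential appearing in the residue is the one whose exponent reduces, after renormalization, to $-2\eta_j x_0 = \mathcal{O}(1)$ on the cone. If instead $k_j \in \mathcal{K} \setminus \mathcal{K}(\mathcal{I})$ then the distance $d_j := \operatorname{dist}(v_j, [v_1, v_2])$ is strictly positive, and with the matching sign choice (namely $\nabla = \Delta^-_{k_0}$ when $v_j < v_1$ and $\nabla = \Delta^+_{k_0}$ when $v_j > v_2$) the exponent is bounded above by $-2\eta_j d_j t$. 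Taking the minimum over the finitely many such $k_j$ delivers the $\mathcal{O}(e^{-2\mu t})$ rate with $\mu$ as stated.

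The main obstacle I anticipate is the bookkeeping: for $(x,t) \in \mathcal{C}$ with $t$ large, the partition $\Delta^\mp_{k_0}$ is governed by $|k_j| \lessgtr k_0 = \sqrt{-\alpha t/(4x)}$, which approaches $f(v) = \sqrt{-\alpha/(4v)}$ as $t \to \infty$, and one must check that this is uniformly equivalent to $v_j \gtrless v$ on the cone so that the renormalization $\nabla = \Delta^\mp_{k_0}$ in RHP \ref{10r} always replaces a would-be growing exponential by its inverse. Once this equivalence is verified, the remaining step is the elementary exponential estimate above.
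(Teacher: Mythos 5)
Your proposal is correct and follows essentially the same route as the paper: read $\Theta_j$ off the residue conditions, absorb the Blaschke-type prefactors into a constant, and reduce to $\operatorname{Re}(\pm 2it\theta(k_j)) = \mp 2\operatorname{Im}k_j\,x_0 \pm 2\operatorname{Im}k_j\,t(v_j - v)$ on the cone, which is exactly the computation the paper carries out (for the single representative case $\nabla=\Delta_{k_0}^-$, $k_j\in\mathcal{K}^-(\mathcal{I})$). Your treatment is in fact slightly more complete, since you make the sign-matching dichotomy and the $|k_j|\lessgtr k_0 \Leftrightarrow v_j\gtrless v$ consistency check explicit, whereas the paper leaves both implicit.
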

\begin{proof}
Here we take $\nabla=\triangle_{k_0}^{-}$ as an example to prove the above estimation. When $k=k_j\in\mathcal{K}^{-}(\mathcal{I})$ and $(x,t)\in \mathcal{C}(x_1,x_2,t_1,t_2)$, then the remainder condition for equation (\ref{91})
$$
|\varpi_j(x_0+vt,k_j)|\leq c|e^{-2i\theta(k_j)t}|.
$$
Next, focus on the index part
$$
-2i\theta(k_j)t=-2i(k_j\frac{x}{t}-\frac{\alpha}{4k_j})t=-2i(k_j\frac{x_0+vt}{t}-\frac{\alpha}{4k_j})t\\
=-2ik_jx_0-2i(k_jv-\frac{\alpha}{4k_j})t.
$$
Its real part is
$$
Re(-2i\theta(k_j)t)=2Imk_jx_0-2Imk_jt(v_j-v),
$$
so there
$$
|e^{-2i\theta(k_j)t}|=e^{2Imk_jx_0}e^{-2Imk_jt(v_j-v)}\leq ce^{-2\mu t}.
$$
\end{proof}
For the discrete spectrum $k_{j} \in \mathcal{K} \backslash \mathcal{K}(\mathcal{I})$, make a disk $D_{k}$ with a sufficiently small radius at each point and do not intersect each other, and define the function
\begin{equation}
\Gamma(k)= \begin{cases}I-\frac{1}{k-k_{j}} \Theta^{j}_{\Delta_{\mathcal{K}}^{\pm}}, & k \in D_{k}, \\ I-\frac{1}{k-{k^{*}_{j}}} \sigma_{2} \Theta^{*j}_{\Delta_{\mathcal{K}}^{\pm}} \sigma_{2}, & k \in \bar{D_{k}}, \\ I, & elsewhere \end{cases}
\end{equation}
where
$$
 \Theta^{j}_{\Delta_{\mathcal{K}}^{\pm}}=\begin{cases}\left(\begin{array}{cc}
0 & \varpi_j^{-1}(s_{11,\nabla}^{\prime})^{-2}(k_{j})\\
0 & 0
\end{array}\right), j \in \nabla ,\\
\left(\begin{array}{cc}
0 & 0 \\
\varpi_js_{11,\nabla}^2(k_j) & 0
\end{array}\right), \quad j \notin \nabla.
\end{cases}
$$
Make transformation
$$
\widetilde{M}_{\Delta_{k_{0}}^{\pm}}\left(k \mid \widetilde{D}_{\Delta_{k_{0}}^{\pm}}\right)=M_{\Delta_{k_{0}}^{\pm}}\left(k \mid \widetilde{D}_{\Delta_{k_{0}}^{\pm}}\right) \Gamma(k),
$$
%with the help of  proposition 8,
and $\widetilde{M}_{\Delta_{k_{0}}^{\pm}}\left(k \mid \widetilde{D}_{\Delta_{k_{0}}^{\pm}}\right)$ satisfies the following relationship
\begin{equation}\label{M}
\widetilde{M}_{\Delta_{k_{0}}^{\pm}}\left(k \mid \widetilde{D}_{\Delta_{k_{0}}^{\pm}}\right)=M_{\Delta_{k_{0}}^{\pm}}\left(k \mid \widetilde{D}_{\Delta_{k_{0}}^{\pm}}\right) \widetilde{V}{(k)},k \in \widetilde{\Sigma}=\underset{k_{j} \in \mathcal{K} \backslash \mathcal{K}(\mathcal{I})}\bigcup\partial D_{k} \cup \partial \bar{D}_{k}.
\end{equation}
The jump matrix $\widetilde{V}(k)=\Gamma(k) $ is known based on the above proposition 8
\begin{equation}\label{vk}
\|\widetilde{V}(k)-I\|_{L^{\infty}(\widetilde{\Sigma})}=\mathcal{O}\left(e^{-2 \mu t}\right).
\end{equation}
In addition, let
\begin{equation}\label{m0}
M_0(k)=\widetilde{M}_{\Delta_{k_{0}}^{-}}\left(k \mid \widetilde{\mathcal{D}}_{\Delta_{k_{0}}^{-}}\right)\left[M_{\Delta^{-}_{k_0}}\left(k \mid \widetilde{\mathcal{D}}(\mathcal{I})\right)\right]^{-1},
\end{equation}
where $\widetilde{\mathcal{D}}(\mathcal{I})=\{k_j,c_j(\mathcal{I})\},~c_{j}(\mathcal{I})=c_{j} \prod_{\operatorname{Re} k_{n} \in I_{\pm} \backslash \mathcal{I}}\left(\frac{k_{j}-k_{n}}{k_{j}-{k}^{*}_{n}}\right)^{2}$.
it is obvious that $\widetilde{M}_{\Delta^{-}_{k_0}}(k|\widetilde{\mathcal{D}})$ and $M_{\Delta^{-}_{k_0}}(k|\widetilde{\mathcal{D}}) $ have the same residue condition, so $M_0(k)$ has no poles, its jump is
\begin{equation}\label{m0v}
M_{0}^{+}(k)=M_{0}^{-}(k) V_{M_{0}}(k),~~~~k\in \widetilde{\Sigma}.
\end{equation}
Further, the form of jump matrix $V_{M_{0}}(k)$ is
\begin{equation}
V_{M_{0}}(k)=M_{\Delta_{k_{0}}^{-}}\left(k \mid \widetilde{\mathcal{D}}(\mathcal{I})\right) \widetilde{V}(k) M_{\Delta_{k_{0}}^{-}}\left(k \mid \widetilde{\mathcal{D}}(\mathcal{I})\right)^{-1},
\end{equation}
according to equation (\ref{vk})
\begin{equation}
\left\|V_{M_{0}}(k)-I\right\|_{L^{\infty}(\widetilde{\Sigma})}=\|\tilde{V}(k)-I\|_{L^{\infty}(\widetilde{\Sigma})}=\mathcal{O}\left(e^{-2 \mu t}\right), \quad t \rightarrow \infty.
\end{equation}
Due to the small norm RH property, it is known that $M_0(k)$ exists and
$$
M_0(k)=I+\mathcal{O}\left(e^{-2 \mu t}\right), \quad t \rightarrow \infty.
$$
So there are
\begin{equation}
M_{\Delta_{k_0}^{\pm}}\left(k \mid \widetilde{\mathcal{D}}\right)=\left(I+\mathcal{O}\left(e^{-2 \mu t}\right)\right) M_{\Delta_{k_0}^{\pm}}\left(k \mid \widetilde{\mathcal{D}}(\mathcal{I})\right).
\end{equation}
\begin{corollary}
$\widetilde{\mathcal{D}}$ represents the scattering data of the equation (\ref{AB}), corresponding to the N-soliton solution of the coupled dispersion AB system, and $\widetilde{\mathcal{D}}(\mathcal{I})$ represents the scattering data on $\mathcal{K}(\mathcal{I})$, when $x,t\in \mathcal{C}(x_1,x_2,v_1,v_2)$
$$
\begin{aligned}
A_{s o l}\left(x, t|\widetilde{\mathcal{D}}\right)&=4 i \lim _{k \rightarrow \infty}\left[k M_{\Delta_{k_{0}}^{\pm}}(x,t,k \mid \widetilde{\mathcal{D}})\right]_{12}\\
&=4 i \lim _{k \rightarrow \infty}\left[k M_{\Delta_{k_{0}}^{\pm}(\mathcal{I})}(x,t,k \mid \widetilde{\mathcal{D}}({\mathcal{I}})\right]_{12}+\mathcal{O}\left(e^{-2 \mu t}\right), \quad t \rightarrow \infty,
\end{aligned}
$$
$$
\begin{aligned}
B_{s o l}\left(x, t|\widetilde{\mathcal{D}}\right)&=-\frac{4 i}{\beta} \lim _{k \rightarrow \infty}\frac{d}{dt}\left[k M_{\Delta_{k_{0}}^{\pm}}(x,t,k \mid \widetilde{\mathcal{D}})\right]_{11}\\
&=-\frac{4 i}{\beta} \lim _{k \rightarrow \infty}\frac{d}{dt}\left[k M_{\Delta_{k_{0}}^{\pm}(\mathcal{I})}(x,t,k \mid \widetilde{\mathcal{D}}({\mathcal{I}})\right]_{11}+\mathcal{O}\left(e^{-2 \mu t}\right), \quad t \rightarrow \infty.
\end{aligned}
$$

\end{corollary}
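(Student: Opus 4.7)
The plan is to read the claimed asymptotics directly off the identity
\[
M_{\Delta_{k_0}^{\pm}}\!\bigl(k\mid\widetilde{\mathcal{D}}\bigr)
= \bigl(I+\mathcal{O}(e^{-2\mu t})\bigr)\,
  M_{\Delta_{k_0}^{\pm}}\!\bigl(k\mid\widetilde{\mathcal{D}}(\mathcal{I})\bigr),\qquad t\to\infty,
\]
established just above by the small-norm analysis of $M_{0}(k)$ defined in (\ref{m0}) with jump (\ref{m0v}). In each line of the statement the first equality is just the standard reconstruction of $A_{sol}$ and $B_{sol}$ from the soliton Riemann--Hilbert problem (the corollary in the previous subsection), so only the second equalities carry content.

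First I would expand both sides of the displayed identity as $k\to\infty$,
\[
M_{\Delta_{k_0}^{\pm}}\!\bigl(k\mid\mathcal{E}\bigr)
=I+\frac{m_{1}(x,t;\mathcal{E})}{k}+\mathcal{O}(k^{-2}),
\qquad \mathcal{E}\in\{\widetilde{\mathcal{D}},\widetilde{\mathcal{D}}(\mathcal{I})\}.
\]
Since the prefactor $I+\mathcal{O}(e^{-2\mu t})$ is $k$-independent, matching the $k^{-1}$ coefficients yields
\[
m_{1}(x,t;\widetilde{\mathcal{D}})
=m_{1}(x,t;\widetilde{\mathcal{D}}(\mathcal{I}))+\mathcal{O}(e^{-2\mu t}),
\qquad (x,t)\in\mathcal{C}(x_{1},x_{2},v_{1},v_{2}).
\]
Reading off the $(1,2)$ entry and multiplying by $4i$ gives the $A_{sol}$ line immediately, because the first equality there is the very definition of $A_{sol}(x,t\mid\widetilde{\mathcal{D}})$.

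The $B_{sol}$ line demands the same conclusion after applying $\partial_{t}$ to the $(1,1)$ entry of $km_{1}$. I would obtain this by revisiting the small-norm argument rather than quoting its output. Differentiating $V_{M_{0}}(k)$ in $t$ brings down a factor of $\theta(k_{j})$ (bounded on the cone) together with a factor of $t$ from $e^{\pm2it\theta(k_{j})}$, so $\partial_{t}V_{M_{0}}$ still obeys a bound of the form $\mathcal{O}(t\,e^{-2\mu t})=\mathcal{O}(e^{-2\mu' t})$ for any $\mu'<\mu$. Since the resolvent of the associated Cauchy singular integral operator differs from the identity by $\mathcal{O}(e^{-2\mu t})$ in operator norm, the same exponential rate is inherited by $\partial_{t}M_{0}(k)$, and hence by $\partial_{t}m_{1}(x,t;\widetilde{\mathcal{D}})$; plugging into $B_{sol}=-\tfrac{4i}{\beta}\partial_{t}(m_{1})_{11}$ closes the second line.

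The main obstacle is exactly this commutation of the asymptotics with $\partial_{t}$ needed for $B_{sol}$: the $A_{sol}$ statement is a purely algebraic consequence of matching $k^{-1}$ coefficients in an already-established identity, whereas for $B_{sol}$ one has to absorb the extra factor of $t$ coming from $\partial_{t}e^{\pm2it\theta(k_{j})}$ into a slightly smaller exponential rate. Once this is done (any $\mu'<\mu$ works), the corollary is immediate.
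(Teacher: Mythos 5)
Your proposal is correct and follows essentially the same route as the paper, which states the corollary as an immediate consequence of the identity $M_{\Delta_{k_0}^{\pm}}(k\mid\widetilde{\mathcal{D}})=(I+\mathcal{O}(e^{-2\mu t}))M_{\Delta_{k_0}^{\pm}}(k\mid\widetilde{\mathcal{D}}(\mathcal{I}))$ by matching $k^{-1}$ coefficients; you are in fact more careful than the paper in flagging that the $B_{sol}$ line requires the error bound to survive $\partial_t$. One small simplification: on the cone $t\theta(k_j)=k_j(x_0+vt)-\tfrac{\alpha t}{4k_j}$ is linear in $t$, so $\partial_t e^{\pm 2it\theta(k_j)}$ only produces the bounded constant $\pm 2i(k_jv-\tfrac{\alpha}{4k_j})$ rather than an extra factor of $t$, and no degradation of the rate to $\mu'<\mu$ is actually needed.
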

\subsection{Solvable RHP near phase point}
From Eq.(\ref{VI}), we can see that $||V(k)-I||$ has no consistent small jump in the neighborhood $\mathcal{A}_1$ and $\mathcal{A}_2$ when $t\rightarrow\infty$, so we need to establish a local model $M^{in}(k)$, which completely matches the jump of $M_{rhp}(k)$ on $(\Sigma^{(2)}\cap \mathcal{A}_1)\cup (\Sigma^{(2)}\cap \mathcal{A}_2)$ of the error function $M^{err}(k)$, so as to achieve a consistent estimation of the attenuation of the transition.
\begin{rhp}\label{12r}
For a matrix $M^{AB}(k)$, the following properties are satisfied:\\
~~~~~~~~$\bullet$  $M^{AB}(k)$ is analyticity in $\mathbb{C} \backslash \Sigma^{AB},$ $\Sigma^{AB}=\cup_{j}\Sigma^{j}, $ $j=1,2^{+},3^{+},4,5,6^{+}$,$7^{+},8;$

$\bullet$  $M^{AB}(k)=I+\mathcal{O}\left(k^{-1}\right), \quad k \rightarrow \infty$;

$\bullet$ $M^{AB}(k)$ has continuous boundary values $M^{AB}_{\pm}(k)$ on $\Sigma^{AB}$ and
$$
M_{+}^{AB}(k)=M_{-}^{AB}(k) V^{AB}|_{\mathcal{A}_1\cup \mathcal{A}_2}(k), \quad k \in \Sigma^{AB},
$$
where
$$
V^{AB}|_{\mathcal{A}_1\cup \mathcal{A}_2}(k)= \begin{cases}\left(\begin{array}{cc}
1 & 0 \\
r\left(-k_0\right) \delta^{-2}\left(-k_0\right)\left(k+k_0\right)^{-2 i v\left(-k_0\right)} e^{2 i t \theta} & 1
\end{array}\right), & k \in \Sigma_{1}, \\
\left(\begin{array}{cc}
1 & \frac{r^{*}\left(-k_0\right) \delta^{2}\left(-k_0\right)}{1+\left|r\left(-k_0\right)\right|^{2}}\left(k+k_0\right)^{2 i v\left(-k_0\right)}e^{-2 i t \theta} \\
0 & 1
\end{array}\right), & k \in \Sigma_{2}^{+}, \\
\left(\begin{array}{cc}
1 & \frac{r^{*}\left(k_0\right) \delta^{2}\left(k_0\right)}{1+\left|r\left(k_0\right)\right|^{2}}\left(k-k_0\right)^{2 i v\left(k_0\right)} e^{-2 i t \theta} \\
0 & 1
\end{array}\right), & k \in  \Sigma_{3}^{+}, \\
\left(\begin{array}{cc}
1 & 0  \\
r\left(k_0\right) \delta^{-2}\left(k_0\right)\left(k-k_0\right)^{-2 i v\left(k_0\right)} e^{2 i t \theta} & 1
\end{array}\right), & k \in \Sigma_{4}, \\
\left(\begin{array}{cc}
1 & r^{*}\left(-k_0\right) \delta^{2}\left(-k_0\right)\left(k+k_0\right)^{2 i v\left(-k_0\right)} e^{-2 i t \theta} \\
0 & 1
\end{array}\right), & k \in \Sigma_{8}, \\
\left(\begin{array}{cc}
1 & 0\\
\frac{r\left(-k_0\right) \delta^{-2}\left(-k_0\right)}{1+\left|r\left(-k_0\right)\right|^{2}}\left(k+k_0\right)^{-2 i v\left(-k_0\right)} e^{2 i t \theta}  & 1
\end{array}\right), & k \in \Sigma_{7}^{+},\\
\left(\begin{array}{cc}
1 & 0\\
\frac{r\left(k_0\right) \delta^{-2}\left(k_0\right)}{1+\left|r\left(k_0\right)\right|^{2}}\left(k-k_0\right)^{-2 i v\left(k_0\right)} e^{2 i t \theta}  & 1
\end{array}\right), & k \in \Sigma_{6}^{+},\\
\left(\begin{array}{cc}
1 & r^{*}\left(k_0\right)\delta^{2}\left(\xi_{2}\right)\left(k-k_0\right)^{2 i v\left(k_0\right)} e^{-2 i t \theta}\\
0  & 1
\end{array}\right), & k \in \Sigma_{5}. \end{cases}
$$
This is mainly because when there is no discrete spectrum, $T_0{(\pm k_0)}$ can be reduced to $\delta(\pm k_0)$ and $1-\Upsilon_{\mathcal{K}}(k)=1$.
\end{rhp}
Here, we need to consider the well-known PC model near the two phase points, as shown in Fig.\ref{10t}.
\begin{figure}[h]
{\includegraphics[width=9.5cm,height=2.8cm]{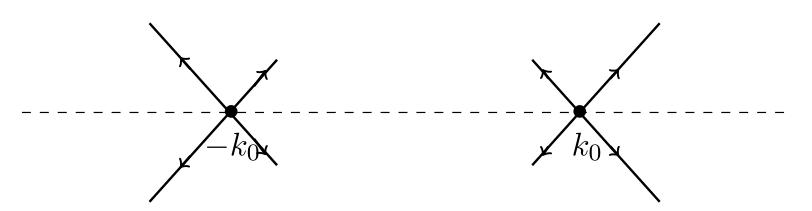}}
\centering
\caption{\small The jump contour Jump profile of local RHP near phase points $\pm k_0$}\label{10t}
\end{figure}

First, expand $\theta(k)$ at $k_0$ point
\begin{equation}
\theta(k)=-\frac{1}{2}\frac{\alpha}{k_0}-\frac{1}{4}\frac{\alpha}{k_0^3}(k-k_0)^2+\frac{1}{4}\frac{\alpha}{k_0^4}(k-k_0)^3,
\end{equation}

then define the following transformation
\begin{equation}\label{tf}
\mathbb{T}: f(k) \rightarrow(\mathbb{T }f)(k)=f\left(\sqrt{\frac{-k_0^3}{\alpha t}}\zeta+k_0\right),
\end{equation}
so there is on $\Sigma_4$
$$
\begin{aligned}
&r\left(k_0\right) \delta^{-2}\left(k_0\right)\left(k-k_0\right)^{-2 i v\left(k_0\right)} e^{2 i t \theta}\\
&=r\left(k_0\right) \delta^{-2}\left(k_0\right)(\sqrt{\frac{-k_0^3}{\alpha t}}\zeta_1)^{-2 i v\left(k_0\right)} e^{2 i t (-\frac{1}{2}\frac{\alpha}{k_0}-\frac{1}{4}\frac{\alpha}{k_0^3}(k-k_0)^2+\frac{1}{4}\frac{\alpha}{k_0^4}(k-k_0)^3)}\\
&=r\left(k_0\right) \delta^{-2}\left(k_0\right) e^{i \nu\left(k_0\right)\left(\ln (k_0^3)-\ln(-\alpha t)\right)}\zeta_1^{-2i\nu(k_0)}e^{\frac{i\zeta_1^2}{2}} e^{-i \alpha t  [\frac{1}{k_0}-\frac{1}{2}\frac{1}{k_0^4}(k-k_0)^3]},
\end{aligned}
$$
we set
\begin{equation}
r_{k_0}=r\left(k_0\right) \delta^{-2}\left(k_0\right) e^{i \nu\left(k_0\right)\left(\ln (k_0^3)-\ln(-\alpha t)\right)} e^{-i \alpha t  [\frac{1}{k_0}-\frac{1}{2}\frac{1}{k_0^4}(k-k_0)^3]},
\end{equation}
in the same way, we can calculate the values of $\Sigma_{2}^{+},\Sigma_{3}^{+},\Sigma_4,\Sigma_{6}^{+},\Sigma_{7}^{+}$ and $\Sigma_8$. Therefore, the jump matrix of $V^{AB}$ in $\mathcal{A}_1\cup \mathcal{A}_2$ can be rewritten as
$$
V^{AB}|_{\mathcal{A}_1\cup \mathcal{A}_2}(k)= \begin{cases}\left(\begin{array}{cc}
1 & 0 \\
r_{-k_0} \zeta^{-2 i \nu\left(-k_0\right)} e^{i \zeta^{2} / 2} & 1
\end{array}\right), & k \in \Sigma_{1}, \\
\left(\begin{array}{cc}
1 & \frac{{r}^{*}_{-k_0}}{1+\left|r_{-k_0}\right|^{2}} \zeta^{2 i \nu\left(-k_0\right)} e^{-i \zeta^{2} / 2} \\
0 & 1
\end{array}\right), & k \in \Sigma_{2}^{+}, \\
\left(\begin{array}{cc}
1 & \frac{{r}^{*}_{k_0}}{1+\left|r_{k_0}\right|^{2}} \zeta^{2 i \nu\left(k_0\right)} e^{-i \zeta^{2} / 2} \\
0 & 1
\end{array}\right), & k \in  \Sigma_{3}^{+}, \\
\left(\begin{array}{cc}
1 &  \\
r_{k_0} \zeta^{-2 i \nu\left(k_0\right)} e^{i \zeta^{2} / 2} & 1
\end{array}\right), & k \in \Sigma_{4}, \\
\left(\begin{array}{cc}
1 & {r}^{*}_{-k_0} \zeta^{2 i \nu\left(-k_0\right)} e^{-i \zeta^{2} / 2} \\
0 & 1
\end{array}\right), & k \in \Sigma_{8},\\
\left(\begin{array}{cc}
1 & 0\\
\frac{r_{-k_0}}{1+\left|r_{-k_0}\right|^{2}} \zeta^{-2 i \nu\left(-k_0\right)} e^{i \zeta^{2} / 2}  & 1
\end{array}\right), & k \in \Sigma_{7}^{+},\\
\left(\begin{array}{cc}
1 & 0\\
\frac{r_{k_0}}{1+\left|r_{k_0}\right|^{2}} \zeta^{-2 i \nu\left(k_0\right)} e^{i \zeta^{2} / 2}  & 1
\end{array}\right), & k \in \Sigma_{6}^{+},\\
\left(\begin{array}{cc}
1 & {r}^{*}_{k_0} \zeta^{2 i \nu\left(k_0\right)} e^{-i \zeta^{2} / 2}\\
0  & 1
\end{array}\right), & k \in \Sigma_{5}. \end{cases}
$$

If the PC model considered in $k_0$ is expanded in the following form
$$
M_{k_0}^{PC}(\zeta)=I+\frac{M_{1}^{p c}\left(k_0\right)}{i\zeta}+\mathcal{O}\left(\zeta^{-2}\right),
$$
where
$$
\begin{aligned}
&M_{1}^{PC}\left(k_0\right)=\left(\begin{array}{cc}
0 & \Xi_{12}\left(k_0\right) \\
-\Xi_{21}\left(k_0\right) & 0
\end{array}\right), \\
&\Xi_{12}\left(k_0\right)=\frac{\sqrt{2 \pi} e^{i \pi / 4} e^{-\pi \nu\left(k_0\right) / 2}}{r_1 \Gamma\left(-i \nu\left(k_0\right)\right)}, \\
&\Xi_{21}\left(k_0\right)=-\frac{\sqrt{2 \pi} e^{-i \pi / 4} e^{-\pi \nu\left(k_0\right) / 2}}{{r^{*}_1} \Gamma\left(i \nu\left(k_0\right)\right)}.
\end{aligned}
$$
The same is true for $-k_0$.
According to the transformation equation (\ref{tf}), the relationship between $M^{AB}$ and $M^{PC}$ is
$$
M^{AB}=I+\frac{1}{ i\zeta} ( M_{1}^{PC}\left(k_0\right)+M_{1}^{PC}\left(-k_0\right))+\mathcal{O}\left(\zeta^{-2}\right),
$$
further can be written as
\begin{equation}\label{mabi}
M^{AB}(x, t ,k)=I+\sqrt{\frac{-k_0^3}{\alpha t}} \frac{M_1^{P C}\left(k_0\right)}{k-k_0}+\sqrt{\frac{-k_0^3}{\alpha t}}\frac{M_1^{P C}\left(-k_0\right)}{k+k_0}+\mathcal{O}(\zeta^{-2}) .
\end{equation}
From this, we get a consistent estimate
\begin{equation}\label{mab}
\left|M^{AB}-I\right| \lesssim O\left(t^{-\frac{1}{2}}\right).
\end{equation}
Near the local circle of $\pm k_0$ have
$$
\left|\frac{1}{k\mp k_0}\right|<c,
$$
where $c$ is a constant.
Finally, we use $M^{AB}$ to define a local model in $\pm k_0$
$$
M^{(i n)}(k)=M^{o u t}(k) M^{AB}(k),
$$
it has the same jump matrix as $M_{rhp}$ and is a bounded function in ${\mathcal{A}_1}$ and ${\mathcal{A}_2}$.
\subsection{Small norm RH problem of $M^{err}(k)$}
In the disk ${\mathcal{A}_1}\cup{\mathcal{A}_2}$, $M^{PC}(k)$ is consistent with the jump matrix of $M^{(2)}_{rhp}(k)$, but $M^{out}(k)$ does not have a jump. Therefore, the matrix $M^{err}(k)$ defined by (\ref{mout}) erases the jump of $M^{(2)}_{rhp}(k)$ inside the disk ${\mathcal{A}_1}\cup {\mathcal{A}_2}$, and there is still a jump from $M^{(2)}_{rhp}(k)$ outside the disk $\mathbb{C}\setminus{\mathcal{A}_1\cup\mathcal{A}_2}$. Therefore, the jump path of $M^{err}(k)$ is
$$
\Sigma^{err}=\cup_{n=1}^{2} \partial \mathcal{A}_{n} \cup\left(\Sigma^{(2)} \backslash \cup_{n=1}^{2} \mathcal{A}_{n}\right),
$$
where $\partial \mathcal{A}_{n}$ is clockwise, see Fig.\ref{11t}. $M^{err}(k)$ can be directly verified to meet the following RHP:
\begin{figure}
{\includegraphics[width=9cm,height=3.23cm]{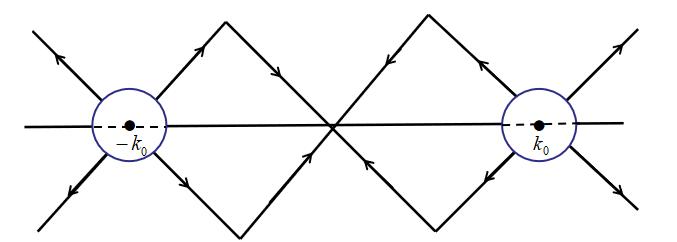}}
\centering
\caption{\small The jump contour $\Sigma^{err}$ for the $M^{err}(k)$.}\label{11t}
\end{figure}

\begin{rhp}\label{13r}
For a matrix $M^{err}(k)$, the following properties are satisfied:

$\bullet$  $M^{err}(k)$  is analytical in $\mathbb{C} \backslash \Sigma^{err}$;

$\bullet$  $M^{err}(k)=\sigma_{0} (M^{err}(k^{*}))^{*}\sigma_{0}^{-1};$

$\bullet$  $M^{err}(k)=I+\mathcal{O}\left(k^{-1}\right), \quad k \rightarrow \infty$;

$\bullet$ $M^{err}(k)$ has continuous boundary values $M_{\pm}^{err}(k)$ on $\Sigma^{err}$ and
$$
M^{err}_{+}(k)=M^{err}_{+}(k) V^{err}(k), \quad k \in \Sigma^{err},
$$
where
$$
V^{err}(k)= \begin{cases}M^{out}(k) V^{(2)}(k) M^{out}(k)^{-1}, & k \in \Sigma^{(2)} \backslash \cup_{n=1}^{2} \mathcal{A}_{n}, \\ M^ {out}(k) M^{AB}(k) M^{out}(k)^{-1}, & k \in \cup_{n=1}^{2} \partial \mathcal{A}_{n}.\end{cases}
$$
\end{rhp}
\begin{proof}
Here we mainly prove the form of $V^{err}(k)$. When $k \in \Sigma^{(2)} \backslash \cup_{n=1}^{2} \mathcal{A}_{n}$, $M^{err}(k)=M^{(2)}_{rhp}(k)(M^{out}(k))^{-1}$ and $M^{out}(k)$ does not jump in (\ref{mout}), then
$$
\begin{aligned}
M^{err}_{+}(k)&=M^{(2)}_{rhp+}(k)(M^{out}(k))^{-1}=M^{(2)}_{rhp-}(k)V^{(2)}(k)(M^{out}(k))^{-1}\\
&=M^{err}_{-}(k)V^{err}(k)=M^{2}_{rhp-}(k)M^{out}(k)^{-1}V^{err}(k) ,
\end{aligned}
$$
obviously,
$$
V^{err}=M^{out}(k)V^{(2)}(M^{out}(k))^{-1},~~~k \in \Sigma^{(2)} \backslash \cup_{n=1}^{2} \mathcal{A}_{n}.
$$
When $k \in\cup_{n=1}^{2} \mathcal{A}_{n}$, we have
$$M^{err}=M^{(2)}_{rhp}(k)(M^{(in)}(k))^{-1}=M^{(2)}_{rhp}(k)(M^{AB}(k))^{-1}(M^{out}(k))^{-1},$$ and $M^{(2)}_{rhp}(k)$ does not jump in $\cup_{n=1}^{2} \partial \mathcal{A}_{n}$, then
$$
M^{err}_{-}(k)=M^{(2)}_{rhp}(k) (M^{AB}(k))^{-1}(M^{out}(k))^{-1}.
$$
For $k \in \cup_{n=1}^{2} \partial \mathcal{A}_{n}$,
$$
M^{err}_{+}(k)=M^{(2)}_{rhp}(k) (M^{out}(k))^{-1},
$$
so from
$$M^{err}_{+}(k)=M^{err}_{-}(k)V^{err}(k),~~~~k \in \cup_{n=1}^{2} \partial \mathcal{A}_{n},$$
 we get
 $$
 V^{err}(k)=M^ {out}(k) M^{AB}(k) (M^{out}(k))^{-1},~~~k \in \cup_{n=1}^{2} \partial \mathcal{A}_{n}.
 $$
\end{proof}
Next, we will show that the small norm RH problem can be well solved by the error function $M^{err}(k)$ for large time.

It is known from equations $(\ref{itheta})$ and $(\ref{VI})$ that the jump matrix satisfies the following estimation
\begin{equation}\label{veii}
\left\|V^{err}-I\right\| \lesssim O\left(e^{-2 t \varepsilon}\right),~~~k \in \Sigma^{(2)} \backslash\left(\mathcal{A}_{1} \cup \mathcal{A}_{2}\right).
\end{equation}

In addition, it is known from equations (\ref{mab}) that when $k \in \cup_{n=1}^{2} \partial \mathcal{A}_{n} $, $V^{err}(k)$ satisfies the following estimation
\begin{equation}\label{vei}
\left\|V^{err}(k)-I\right\|=\left|M^{out }(k)^{-1}\left(M^{AB}(k)-I\right) M^{out}(k)\right|=\mathcal{O}\left(t^{-\frac{1}{2}}\right).
\end{equation}
\begin{proposition}
$RHP \ref{13r}$ has a unique solution in the form of
\begin{equation}
M^{err}(k)=I+\frac{1}{2 \pi i} \int_{\Sigma^{err}} \frac{\mu^{err}(s)\left(V^{err}(s)-I\right)}{s-k} d s,
\end{equation}
where $\mu^{err}\in L^{2}\left(\Sigma^{err}\right)$, meet $\left(1-C_{err}\right) \mu^{err}=I$, $C_{err}$ is Cauchy projection operator, defined as
$$
C_{err} f(z)=\lim _{k^{\prime} \rightarrow k \in \Sigma^{err}} \frac{1}{2 \pi i} \int_{\Sigma^{err}} \frac{f(s)}{s-k^{\prime}} d s.
$$
\end{proposition}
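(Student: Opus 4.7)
The plan is to recognize this as a standard small-norm Riemann--Hilbert argument of Beals--Coifman type, driven by the two uniform estimates (\ref{veii}) and (\ref{vei}). The key point is that the jump matrix $V^{err}$ differs from $I$ by something small in both $L^\infty$ and $L^2$, so the associated singular integral operator becomes a contraction for large $t$.

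First I would set up the integral equation. Writing
\begin{equation*}
M^{err}(k)=I+\frac{1}{2\pi i}\int_{\Sigma^{err}}\frac{\mu^{err}(s)(V^{err}(s)-I)}{s-k}\,ds,
\end{equation*}
and applying the Plemelj formulas to the jump condition $M^{err}_+=M^{err}_-V^{err}$, one obtains the singular integral equation $(1-C_{err})\mu^{err}=I$, where $C_{err}f=C_-\bigl(f(V^{err}-I)\bigr)$ and $C_-$ is the minus-boundary Cauchy projector on $\Sigma^{err}$. Existence and uniqueness of $\mu^{err}\in I+L^2(\Sigma^{err})$ is thus reduced to showing that $1-C_{err}$ is invertible on $L^2(\Sigma^{err})$.

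Next I would control the operator norm of $C_{err}$. Since $\Sigma^{err}$ is a finite union of smooth oriented arcs and circles, the Cauchy projector $C_-$ is bounded on $L^2(\Sigma^{err})$ with a constant depending only on the geometry. Therefore
\begin{equation*}
\|C_{err}\|_{L^2\to L^2}\;\lesssim\;\|V^{err}-I\|_{L^\infty(\Sigma^{err})}.
\end{equation*}
Splitting $\Sigma^{err}$ into the parts away from the disks and the boundary circles $\partial\mathcal{A}_1\cup\partial\mathcal{A}_2$ and using (\ref{veii}) and (\ref{vei}), we get $\|V^{err}-I\|_{L^\infty}=\mathcal{O}(t^{-1/2})$. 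Hence for $t$ large enough $\|C_{err}\|<1$, and the Neumann series
\begin{equation*}
(1-C_{err})^{-1}=\sum_{n\ge 0}C_{err}^{\,n}
\end{equation*}
converges in operator norm. This produces a unique $\mu^{err}\in I+L^2(\Sigma^{err})$, and the formula in the proposition then defines $M^{err}$; checking that this candidate actually solves RHP \ref{13r} (jump, normalization at $\infty$, symmetry inherited from the symmetry of $V^{err}$) is routine once $\mu^{err}$ exists. Uniqueness of $M^{err}$ itself follows from the standard argument: the ratio of two solutions is entire with limit $I$ at infinity, hence identically $I$ by Liouville.

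The only point needing care, and the one I would flag as the main obstacle, is the $L^2$ bound on $V^{err}-I$ on the unbounded components of $\Sigma^{err}$, since (\ref{veii}) is only stated in $L^\infty$. This is handled by combining the $L^\infty$ decay $e^{-2t\varepsilon}$ with the fact that along the rays of $\Sigma^{(2)}\setminus(\mathcal{A}_1\cup\mathcal{A}_2)$ the exponential factors $e^{\pm 2it\theta}$ produce additional Gaussian-type decay in $|k\pm k_0|$ (from (\ref{itheta})), together with $r\in H^{1,1}(\mathbb R)$ from Theorem 1, so that $V^{err}-I\in L^1\cap L^2\cap L^\infty$ with norms that are $\mathcal{O}(t^{-1/2})$. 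Once this is in hand, the contraction argument above closes the proof.
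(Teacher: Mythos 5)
Your proposal is correct and takes essentially the same route as the paper: the paper's own proof is a two-line appeal to the small-norm machinery (boundedness of $C_{err}$ from the estimate (\ref{vei}) and the resolvent bound $\|\mu\|_{L^{2}}\lesssim \|C_{err}\|/(1-\|C_{err}\|)$ cited from \cite{CF-2022-JDE}), which is exactly the Beals--Coifman contraction argument you spell out in full. Your write-up is in fact more careful than the paper's, notably in flagging that $V^{err}-I$ must also lie in $L^{2}(\Sigma^{err})$ on the unbounded arcs for $C_{err}I$ to make sense, a point the paper glosses over.
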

\begin{proof}
It can be seen from Eq.(\ref{vei}) that the operator is bounded. It can be seen from Ref.\cite{CF-2022-JDE}
\begin{equation}\label{ul}
\|\mu\|_{L^{2}\left(\Sigma^{err}\right)} \lesssim \frac{\left\|C_{err}\right\|}{1-\left\|C_{err}\right\|} \lesssim t^{-\frac{1}{2} },
\end{equation}
obviously, $1-C_{err}$ is reversible, so $M^{err}$ is unique.
\end{proof}
\begin{proposition}
Consider the asymptotic expansion of $M^{err}$ when $k\rightarrow\infty$
\begin{equation}
M^{err}(k)=I+\frac{M^{err}_{1}}{k}+\mathcal{O}\left(\frac{1}{k^{2}}\right),
\end{equation}
where
$$
\begin{aligned}
M^{err}_{1}&=\frac{1}{i}\sqrt{\frac{-k_0^3}{\alpha t}}M^{out}(k_0)M_1^{P C}\left(k_0\right)(M^{out})^{-1}(k_0)\\
&-\frac{1}{i}\sqrt{\frac{-k_0^3}{\alpha t}} M^{out}(-k_0)M_1^{P C}\left(-k_0\right)(M^{out})^{-1}(-k_0)+\mathcal{O}(t^{-1}).
\end{aligned}
$$
\end{proposition}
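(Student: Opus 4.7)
The plan is to read off $M^{err}_1$ as the coefficient of $1/k$ in the large-$k$ expansion of the integral representation from the preceding proposition. Substituting the geometric expansion $(s-k)^{-1}=-\sum_{j\ge 0}s^{j}/k^{j+1}$ into
$$
M^{err}(k)=I+\frac{1}{2\pi i}\int_{\Sigma^{err}}\frac{\mu^{err}(s)(V^{err}(s)-I)}{s-k}\,ds
$$
immediately gives
$$
M^{err}_1=-\frac{1}{2\pi i}\int_{\Sigma^{err}}\mu^{err}(s)\bigl(V^{err}(s)-I\bigr)\,ds.
$$

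I would split $\Sigma^{err}=\partial\mathcal{A}_{1}\cup\partial\mathcal{A}_{2}\cup\bigl(\Sigma^{(2)}\setminus(\mathcal{A}_{1}\cup\mathcal{A}_{2})\bigr)$. On the non-local part, combining (\ref{veii}) with the operator bound (\ref{ul}) shows that the contribution is $\mathcal{O}(e^{-2t\varepsilon})$, well inside the $\mathcal{O}(t^{-1})$ error. On each circle $\partial\mathcal{A}_{n}$ I would write $\mu^{err}=I+(\mu^{err}-I)$ and apply Cauchy--Schwarz together with (\ref{vei}) and (\ref{ul}) to bound the $(\mu^{err}-I)$-piece by $\|\mu^{err}-I\|_{L^{2}}\,\|V^{err}-I\|_{L^{2}}\lesssim t^{-1}$. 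The remaining leading term
$$
-\frac{1}{2\pi i}\int_{\partial\mathcal{A}_{1}\cup\partial\mathcal{A}_{2}}M^{out}(s)\bigl(M^{AB}(s)-I\bigr)M^{out}(s)^{-1}\,ds
$$
is then evaluated by substituting the local expansion (\ref{mabi}) for $M^{AB}$, keeping the simple-pole contribution $\sqrt{-k_{0}^{3}/(\alpha t)}\,M_{1}^{PC}(\pm k_{0})/\bigl(i(s\mp k_{0})\bigr)$, and applying Cauchy's formula on the clockwise circles, which yields a factor $-2\pi i$ and collapses each integrand to its value at $\pm k_{0}$. After combining with the overall $-1/(2\pi i)$ prefactor and the $1/i$ inherited from the PC normalization, the two announced terms for $M^{err}_{1}$ appear, with signs determined by the respective orientations of $\partial\mathcal{A}_{1}$ and $\partial\mathcal{A}_{2}$.

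The main obstacle is to verify that the $\mathcal{O}(\zeta^{-2})$ tail left by (\ref{mabi}) really contributes only $\mathcal{O}(t^{-1})$ after integration against the analytic factor $M^{out}(s)(\cdot)M^{out}(s)^{-1}$ over $\partial\mathcal{A}_{n}$. A careful bookkeeping is needed: each additional factor of $\zeta^{-1}$ brings down $\sqrt{-k_{0}^{3}/(\alpha t)}$ but also an extra $(s\mp k_{0})^{-1}$, so one must check that the net effect, together with the circumference of the circle and the residue count at $\pm k_{0}$, is indeed of order $t^{-1}$ rather than $t^{-1/2}$. This, combined with absorbing the non-local contribution and the $\mu^{err}-I$ correction into the same remainder, yields the stated asymptotics.
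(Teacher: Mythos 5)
Your proposal follows essentially the same route as the paper: extract $M^{err}_1=-\frac{1}{2\pi i}\int_{\Sigma^{err}}\mu^{err}(s)(V^{err}(s)-I)\,ds$ from the integral representation, split off the $(\mu^{err}-I)$ correction and the contribution of $\Sigma^{(2)}\setminus(\mathcal{A}_1\cup\mathcal{A}_2)$ as $\mathcal{O}(t^{-1})$ via (\ref{veii}), (\ref{vei}) and (\ref{ul}), and evaluate the remaining circle integrals by the residue theorem using the local expansion (\ref{mabi}). This matches the paper's argument; the only caveat is that the relative minus sign between the $\pm k_0$ terms comes from the local PC normalization at $-k_0$ rather than from the orientations of $\partial\mathcal{A}_1$ and $\partial\mathcal{A}_2$, which are both clockwise.
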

\begin{proof}
From Proposition 10,
$$
M^{err}_{1}=-\frac{1}{2 \pi i} \int_{\Sigma^{err}} \mu^{err}(s)\left(V^{err}(s)-I\right) d s ,
$$
then there are
\begin{equation}
\begin{aligned}
M^{err}_{1}&=-\frac{1}{2 \pi i} \int_{\Sigma^{err}}\left(V^{err}-I\right) d s-\frac{1}{2 \pi i} \int_{\Sigma^{err}}\left(\mu^{err}(s)-I\right)\left(V^{err}-I\right) d s \\
&=-\frac{1}{2 \pi i} \oint_{\cup_{n=1}^{2} \partial \mathcal{A}_{n} }\left(V^{err}-I\right) d s-\frac{1}{2 \pi i} \int_{\Sigma^{err} \backslash \cup_{n=1}^{2} \partial \mathcal{A}_{n} }\left(V^{err}-I\right) d s \\
&-\frac{1}{2 \pi i} \int_{\Sigma^{err}}\left(\mu^{err}(s)-I\right)\left(V^{err}-I\right) d s .
\end{aligned}
\end{equation}
Using equations (\ref{veii}), (\ref{ul}), $M^{err}_{1}$ can be written as
$$
M^{err}_{1}=-\frac{1}{2 \pi i} \oint_{\cup_{n=1}^{2} \partial \mathcal{A}_{n}}\left(V^{err}(s)-I\right) d s+O\left(t^{-1}\right).
$$
And using equations (\ref{mabi}) and (\ref{vei}), and residue theorem there are
$$
\begin{aligned}
M^{err}_{1}&=\frac{1}{i}\sqrt{\frac{-k_0^3}{\alpha t}}M^{out}(k_0)M_1^{P C}\left(k_0\right)(M^{out})^{-1}(k_0)\\
&-\frac{1}{i}\sqrt{\frac{-k_0^3}{\alpha t}} M^{out}(-k_0)M_1^{P C}\left(-k_0\right)(M^{out})^{-1}(-k_0)+\mathcal{O}(t^{-1}).
\end{aligned}
$$
\end{proof}
\section{Asymptotic analysis on the pure $\bar{\partial}$-problem}
In this section, we mainly solve the part of $\bar{\partial} \mathcal{R}^{(2)}(k)\neq0$ in $M^{(3)}(k)$, that is, RHP \ref{7r}. Its solution can be expressed in the following form
\begin{equation}\label{m3}
M^{(3)}(k)=I-\frac{1}{\pi} \iint_{\mathbb{C}} \frac{M^{(3)} W^{(3)}}{s-k} \mathrm{~d} A(s),
\end{equation}
where $W^{(3)}=M^{(2)}_{rhp}(k) \bar{\partial} R^{(2)} M^{(2)}_{rhp}(k)^{-1},$ and $dA(s)$ is the Lebesgue measure on the real plane. In fact, Eq.(\ref{m3}) can also be written in the form of an operator
\begin{equation}
(I-\mathbb{S}) M^{(3)}(k)=I,
\end{equation}
where $\mathbb{S}$  is Cauchy-Green operator,
$$
\mathbb{S}[f](k)=-\frac{1}{\pi} \iint_{\mathbb{C}} \frac{f(s) W(s)}{s-k} \mathrm{~d} A(s).
$$
If the operator $(I-\mathbb{S})^{-1}$ exists, then the above equation has a solution. Next, we will prove the existence of  operator $(I-\mathbb{S})^{-1}$ in regions $D_5$ and $D_4^{\pm}$, which can be proved by similar methods in other regions. Before proving the existence of operator $(I-\mathbb{S})^{-1}$, we give the following lemma

\begin{lemma}
\begin{equation}
\begin{aligned}
\left|\bar{\partial} R_{5}^{(2)} e^{2 i \theta t}\right| \lesssim\left(\left|\bar{\partial} \Upsilon_{\mathcal{K}}(s)\right|+\left|p_5^{\prime}(\operatorname{Re}(k))\right|+\left|k-k_0\right|^{-\frac{1}{2}}\right) e^{-c_1|u||v| t},~~~k \in D_{5}\\
\left|\bar{\partial} R_{7}^{(3)} e^{-2 i \theta t}\right| \lesssim\left\{\begin{array}{l}
\left(\left|\bar{\partial} \Upsilon_{\mathcal{K}}(s)\right|+\left|p_4^{\prime}(\operatorname{Re}(k))\right|+\frac{1}{|k|^{1/2}}\right) e^{-c_2|v| t},~~~k \in D_{4}^{-} \\
\left(\left|\bar{\partial} \Upsilon_{\mathcal{K}}(s)\right|+\left|p_4^{\prime}(\operatorname{Re}(k))\right|+\left|k-k_0\right|^{-\frac{1}{2}}\right) e^{-c_1|u||v| t}.~~~ k \in D_{4}^{+}
\end{array}\right.
\end{aligned}\label{l2}
\end{equation}
\end{lemma}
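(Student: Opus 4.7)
The plan is to estimate $|\bar{\partial} R_j^{(2)} e^{\pm 2it\theta}|$ as a product of two pointwise bounds: the algebraic factor coming directly from the preceding lemma (which furnishes $|\bar{\partial} R_j|\lesssim |\bar{\partial}\Upsilon_{\mathcal{K}}|+|p_j'(\operatorname{Re} k)|+|k\mp k_0|^{-1/2}$ on outer sectors and the same bound with $|k|^{-1/2}$ on inner sectors), and a sector-specific bound on the oscillatory factor $|e^{\pm 2it\theta}|$ derived from the formula (\ref{itheta})
\[
\operatorname{Re}(i\theta)=\frac{v\alpha}{4}\left(\frac{1}{k_0^2}-\frac{1}{|k|^2}\right).
\]
Once the two factors are combined multiplicatively, the inequalities of (\ref{l2}) emerge.

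For the sectors $D_5$ and $D_4^+$, which sit adjacent to $\pm k_0$, I would work in local coordinates $k=\pm k_0+u+iv$ and Taylor-expand
\[
\theta(k)=\theta(\pm k_0)-\frac{\alpha(k\mp k_0)^2}{4k_0^3}+\mathcal{O}\bigl(|k\mp k_0|^3\bigr),
\]
so that $\operatorname{Im}\theta\sim -\frac{\alpha}{2k_0^3}\,uv$. Since $\alpha<0$ is fixed and the sign of $uv$ is pinned inside each cone by construction, one reads off $|e^{\pm 2it\theta}|\lesssim e^{-c_1|u||v|t}$ with $c_1\asymp |\alpha|/k_0^3$. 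The pairing of the sign in front of $\theta$ with the quadrant of the sector must be checked once for each region, and the cubic remainder is absorbed by choosing the opening angle of the sector small enough.

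For the interior sector $D_4^-$, $k$ stays uniformly bounded away from $\pm k_0$, so the bracket $\frac{1}{k_0^2}-\frac{1}{|k|^2}$ in (\ref{itheta}) is bounded below in magnitude; consequently $|\operatorname{Re}(-i\theta)|\ge c|v|$ and the oscillation supplies the decay $|e^{-2it\theta}|\lesssim e^{-c_2|v|t}$. Combining this with the inner-sector form of the $|\bar{\partial} R_j|$ bound (where the singular factor is $|k|^{-1/2}$ rather than $|k-k_0|^{-1/2}$) yields the $D_4^-$ alternative in (\ref{l2}).

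The principal obstacle is the sign bookkeeping: verifying in each individual sector $D_j$ that the exponent $\pm 2it\theta$ genuinely has negative real part so that one obtains decay rather than growth, and that the quadratic term in the local expansion of $\theta$ at $\pm k_0$ dominates the cubic remainder uniformly throughout the sector. Once these sectorial sign conditions are confirmed and the opening angles are chosen small enough, the remainder of the proof is a routine multiplication of the two pointwise estimates.
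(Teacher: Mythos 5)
Your overall architecture matches the paper's: you multiply the pointwise bound on $|\bar{\partial}R_j|$ from the preceding lemma by a sector-by-sector decay bound on $|e^{\pm 2it\theta}|$, and your treatment of $D_4^-$ (where $|k|$ stays bounded away from $k_0$, so the bracket in the identity $\operatorname{Re}(i\theta)=\frac{\operatorname{Im}(k)\,\alpha}{4}\bigl(\frac{1}{k_0^2}-\frac{1}{|k|^2}\bigr)$ is bounded below and the phase contributes $e^{-c_2|v|t}$) is exactly the paper's argument. The gap is in how you obtain the $e^{-c_1|u||v|t}$ decay on $D_5$ and $D_4^+$. You Taylor-expand $\theta$ about $\pm k_0$, keep the quadratic term $-\frac{\alpha}{4k_0^3}(k\mp k_0)^2$ (whose imaginary part produces $uv$), and claim the cubic remainder is ``absorbed by choosing the opening angle of the sector small enough.'' That does not work: the ratio of the cubic to the quadratic term is of order $|k\mp k_0|/k_0$, which is governed by the distance to the stationary point, not by the opening angle --- the angle only constrains $v/u$. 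Since $D_5$ is unbounded, no choice of angle subordinates the remainder to the quadratic term throughout the sector, and even on the bounded sector $D_4^+$ the expansion degenerates once $|k-k_0|$ is comparable to $k_0$.

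The paper avoids this entirely because $\theta$ is an explicit rational function, so no expansion is needed: writing $k=k_0+u+iv$ in $D_5$, the identity above gives exactly
$$
\operatorname{Re}(2it\theta)=-\frac{\alpha v t}{2}\cdot\frac{-u^2-2k_0u-v^2}{k_0^2\bigl((u+k_0)^2+v^2\bigr)},
$$
and for $\alpha<0$, $0\le v\le u$ the term $-2k_0u$ in the numerator already delivers the bound $\le -c_1|u||v|t$ used in the lemma (with the analogous computation in $D_4^+$). Replacing your Taylor expansion by this exact computation closes the gap; the remaining steps of your proposal --- the multiplication with the $|\bar{\partial}R_j|$ estimates and the $D_4^-$ case --- then go through as written.
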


\begin{proof}
Recalling the above condition $\alpha<0$, for the convenience of analysis, the exponential part can be written as $-\frac{\alpha v t}{2}(\frac{1}{u^2+v^2}-\frac{1}{k_0^2})$, which is a decreasing function of $u$. In the $D_5$ region, the index part has
$$
\begin{aligned}
-\frac{\alpha v t}{2}\left(\frac{1}{(u+k_0)^2+v^2}-\frac{1}{k_0^2}\right)&=-\frac{\alpha v t}{2}\left(\frac{1}{(u+k_0)^2+v^2}-\frac{1}{k_0^2}\right)\\
&=-\frac{\alpha v t}{2}\left(\frac{-u^2-2k_0u-v^2}{((u+k_0)^2+v^2)(k_0^2)}\right)\\
&\leq -c_1|u||v|t.
\end{aligned}
$$
There is a similar analysis in area $D_4^{+}$. In region $D_4^{-}$, $k=u+iv$ and $0\leq v <u$ the index part has
$$
\begin{aligned}
\frac{\alpha v t}{2}(\frac{1}{u^2+v^2}-\frac{1}{k_0^2})&=\frac{\alpha v t}{2}(\frac{1}{u^2+v^2}-\frac{1}{k_0^2})\\
&\leq -c_2|v|t.
\end{aligned}
$$
The overall conclusion is summarized as equation (\ref{l2}).
\end{proof}
Therefore, our next goal is to prove the existence of $(I-\mathbb{S})^{-1}$.
\begin{proposition}
For sufficiently large $t$, operator $\mathbb{S}$ is a small norm and has
\begin{equation}\label{sl}
\|\mathbb{S}\|_{L^{\infty} \rightarrow L^{\infty}} \leqslant c t^{-\frac{1}{4}},
\end{equation}
therefore, $(I-\mathbb{S})^{-1}$ exists.
\end{proposition}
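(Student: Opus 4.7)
The plan is to control $\mathbb{S}$ directly from its integral kernel by splitting the support of $W^{(3)}=M^{(2)}_{rhp}\,\bar\partial\mathcal{R}^{(2)}\,(M^{(2)}_{rhp})^{-1}$ into the eight sectors $D_j$ and estimating each contribution separately. For any $f\in L^\infty$, the bound
\begin{equation*}
|\mathbb{S}[f](k)|\leq \frac{\|f\|_{L^\infty}}{\pi}\sum_{j}\iint_{D_j}\frac{|W^{(3)}(s)|}{|s-k|}\,dA(s)
\end{equation*}
reduces the claim to showing that each of these eight integrals is $\lesssim t^{-1/4}$ uniformly in $k\in\mathbb{C}$. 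Since $M^{(2)}_{rhp}$ and $(M^{(2)}_{rhp})^{-1}$ are uniformly bounded away from the poles and the jump contour (this was the content of Section 7 and of the small-norm analysis in Section 8), I may replace $|W^{(3)}|$ by $|\bar\partial R_{j}(s)\,e^{\pm2it\theta(s)}|$ up to a multiplicative constant.

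Next I would invoke the preceding lemma (together with its obvious analogues in the six other sectors not written out there), which bounds each $|\bar\partial R_j\,e^{\pm2it\theta}|$ by a sum of three qualitatively distinct terms: a compactly supported piece $|\bar\partial\Upsilon_{\mathcal{K}}|$, a regularity piece $|p_j'(\operatorname{Re}(s))|$, and a borderline piece $|s\mp k_0|^{-1/2}$, each multiplied by $e^{-c|u||v|t}$ or $e^{-c|v|t}$. The three pieces are then controlled as follows. For the $\bar\partial\Upsilon_{\mathcal{K}}$ contribution, the support is an annulus bounded away from $\mathbb{R}$ and from $\{\pm k_0\}$, so $|s-k|^{-1}$ and the exponent are both bounded there and the prefactor $e^{-ct}$ gives exponential decay. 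For the $|p_j'|$ term, parametrize $D_j$ by tangent/normal coordinates $s=\pm k_0+u\,e^{i\phi_j}+iv$ along the opened rays; since Theorem 1 gives $r\in H^{1,1}(\mathbb{R})$, one has $p_j'\in L^2$, so Cauchy--Schwarz in $u$ against $\|(s-k)^{-1}\|_{L^2_u}$ (which is uniformly bounded in $v$ and $k$) leaves a $v$-integral $\int_0^{\infty}e^{-cvt}\,dv\lesssim t^{-1}$, even stronger than needed. For the $|s\mp k_0|^{-1/2}$ term, choose a Hölder pair $(p,q)$ with $p\in(2,4)$ so that $|\cdot\mp k_0|^{-1/2}\in L^p_{\text{loc}}$ and $|s-k|^{-1}\in L^q_{\text{loc}}$ uniformly in $k$; applying Hölder in the tangential direction and integrating the resulting exponential factor in $v$ produces precisely a factor of $t^{-1/4}$ after optimizing the exponents.

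The main obstacle is exactly this third estimate, because the $|s\mp k_0|^{-1/2}$ singularity at the stationary phase point is borderline and is what forces the rate $t^{-1/4}$ rather than the $t^{-1/2}$ one obtains on the RHP side. Two technicalities deserve care: first, the Hölder exponent must be chosen compatibly with the fact that $|s-k|^{-1}$ is only locally in $L^q$ on the plane for $q<2$, which is handled by dyadically splitting the region $D_j$ into $\{|s-k|\leq 1\}$ and $\{|s-k|>1\}$ and treating the far part by the uniform exponential decay; second, the estimates must remain uniform as $k\to\pm k_0$, where $(s-k)^{-1}$ itself becomes singular, which is handled by the same dyadic splitting together with the $L^p$-integrability of the singularity. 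Once the bound (\ref{sl}) is established, $\|\mathbb{S}\|_{L^\infty\to L^\infty}<1$ for all sufficiently large $t$, so $(I-\mathbb{S})^{-1}=\sum_{n\geq 0}\mathbb{S}^n$ converges in operator norm and yields the unique solution of (\ref{m3}).
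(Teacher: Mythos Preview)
Your overall architecture matches the paper's: split into the sectors $D_j$, use the boundedness of $M^{(2)}_{rhp}$ to reduce to $|\bar\partial R_j e^{\pm 2it\theta}|$, invoke the preceding lemma to get three pieces, and treat the borderline $|s\mp k_0|^{-1/2}$ piece by H\"older with $p\in(2,4)$. The endpoint $t^{-1/4}$ indeed comes from that third piece, exactly as you say.

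There is, however, a real error in your handling of the $|p_j'|$ piece. You assert that after Cauchy--Schwarz in the tangential variable, $\|(s-k)^{-1}\|_{L^2_u}$ is ``uniformly bounded in $v$ and $k$''. It is not: a direct computation (done in the paper as the key inequality \eqref{sk}) gives $\|(s-k)^{-1}\|_{L^2_u}=\sqrt{\pi}\,|v-\eta|^{-1/2}$ with $\eta=\operatorname{Im}k$, which blows up as $v\to\eta$. Consequently the remaining $v$-integral is not $\int_0^\infty e^{-cvt}\,dv\lesssim t^{-1}$ but rather $\int_0^\infty |v-\eta|^{-1/2}e^{-c v^2 t}\,dv\lesssim t^{-1/4}$. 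Your final bound survives only because the third term already forces $t^{-1/4}$; the stated reason for the second term is wrong and should be corrected. The paper handles $\Omega_1$ and $\Omega_2$ by exactly this Cauchy--Schwarz route, carrying the $|v-\eta|^{-1/2}$ factor through and bounding the resulting $v$-integral by $ct^{-1/4}$.

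Two smaller points. For the $\bar\partial\Upsilon_{\mathcal{K}}$ piece your conclusion (exponential decay) is actually sharper than the paper's crude $t^{-1/4}$ bound, but your reasoning that $|s-k|^{-1}$ is ``bounded'' on the annular support is not quite right when $k$ sits inside that support; what you need is that $|s-k|^{-1}$ is locally integrable on the plane, so its integral over any fixed compact set is bounded uniformly in $k$. For the H\"older step, your worry about $|s-k|^{-1}\in L^q$ only for $q<2$ ``on the plane'' is misplaced: the paper applies H\"older in the one-dimensional $u$-variable, where $\|(s-k)^{-1}\|_{L^q_u}\lesssim |v-\eta|^{1/q-1}$ for every $q>1$. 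No dyadic splitting is needed; the singularity as $k\to\pm k_0$ (i.e.\ $\eta\to 0$) is absorbed by the same $|v-\eta|^{1/q-1}$ estimate and the subsequent $v$-integration, exactly as in the paper's $H_1$/$H_2$ split.
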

\begin{proof}
This is discussed in detail in the $D_5$ region, and other regions can be obtained similarly. Let $s=u+k_0+iv,k=\zeta+i\eta $, for any $f \in L^{\infty}$, we have
\begin{equation}
\begin{aligned}
|\mathbb{S}(f)| &\leq \frac{1}{\pi} \iint_{D_5} \frac{\left|f M_{rhp}^{(2)} \bar{\partial} R^{(2)} M_{rhp}^{(2)}{ }^{-1}\right|}{|s-k|} d A(s)\\
&\leq \frac{1}{\pi}\|f\|_{L^{\infty}}\left\|M_{rhp}^{(2)}\right\|_{L^{\infty}}\left\|M_{rhp}^{(2)}{ }^{-1}\right\|_{L^{\infty}} \int_{0}^{\infty} \int_{k_0+v}^{\infty}\frac{\left|\bar{\partial} R_{5}\right| e^{\frac{\alpha v t}{2}(\frac{1}{k_0^2}-\frac{1}{(u+k_0)^2+v^2})}}{|s-k|} du dv \\
&\leq c\left(\Omega_{1}+\Omega_{2}+\Omega_{3}\right),
\end{aligned}
\end{equation}
where
$$
\begin{aligned}
&\Omega_{1}=\int_{0}^{\infty} \int_{k_0+v}^{\infty}\frac{\left|\bar{\partial} \Upsilon_{\mathcal{K}}(s)\right|e^{\frac{\alpha v t}{2}(\frac{1}{k_0^2}-\frac{1}{(u+k_0)^2+v^2})}}{|s-k|}du dv,\\
&\Omega_{2}=\int_{0}^{\infty} \int_{k_0+v}^{\infty}\frac{\left|r^{\prime}(\operatorname{Re}(s))\right|e^{\frac{\alpha v t}{2}(\frac{1}{k_0^2}-\frac{1}{(u+k_0)^2+v^2})}}{|s-k|}du dv,\\
&\Omega_{3}=\int_{0}^{\infty} \int_{k_0+v}^{\infty}\frac{\left|s-k_0\right|^{-\frac{1}{2} }e^{\frac{\alpha v t}{2}(\frac{1}{k_0^2}-\frac{1}{(u+k_0)^2+v^2})}}{|s-k|} du dv.
\end{aligned}
$$

Next, the main idea is to estimate $\Omega_{j},j=1,2,3$. An important inequality to know in advance is
\begin{equation}\label{sk}
\begin{aligned}
\left\|\frac{1}{s-k}\right\|_{L^{2}\left(v+k_0, \infty\right)}^{2}&=\int_{v+k_0}^{\infty} \frac{1}{|s-k|^{2}} d u \leq \int_{-\infty}^{\infty} \frac{1}{|s-k|^{2}} d u \\
&=\int_{-\infty}^{\infty} \frac{1}{(u-\zeta)^{2}+(v-\eta)^{2}} d u=\frac{1}{|v-\eta|} \int_{-\infty}^{\infty} \frac{1}{1+y^{2}} d y=\frac{\pi}{|v-\eta|},
\end{aligned}
\end{equation}
where $y=\frac{u-\zeta}{v-\eta}$.

For $\Omega_1$, it can be obtained by direct calculation using Eq.(\ref{sk})
\begin{equation}
\begin{aligned}
\Omega_{1}
&\leq \int_{0}^{\infty} \left\|\bar{\partial} \Upsilon_{\mathcal{K}}(s)\right\|_{L^{2}}\left\|\frac{1}{s-k}\right\|_{L^{2}} d v \leq c_{3} \int_{0}^{\infty} \frac{e^{-\frac{\alpha v t}{2}(\frac{1}{(u+k_0)^2+v^2}-\frac{1}{k_0^2})}}{\sqrt{|\eta-v|}}d v\\
&\leq c_3 \int_{0}^{\infty} \frac{e^{-c_1|u||v|t}}{\sqrt{|\eta-v|}}d v \leq  c_3 \int_{0}^{\infty} \frac{e^{-c_1|v|^2 t}}{\sqrt{|\eta-v|}}d v\\
&\leq c t^{-\frac{1}{4}}.
\end{aligned}
\end{equation}
Use the same method to estimate $\Omega_2$. For the estimation of $\Omega_3$, Here we need the help of H\"{o}lder inequality with $ p> 2$ and $\frac{1}{p}+\frac{1}{q}=1$.
\begin{equation}\label{slp}
\begin{aligned}
\left\||s-k_0|^{-\frac{1}{2}}\right\|_{L^{p}(k_0+v,\infty)}&=\left(\int_{k_0+v}^{\infty} |u-k_0+i v|^{-\frac{p}{2}} d u\right)^{\frac{1}{p}} \\
&=\left(\int_{v}^{\infty} {|u+i v|^{-\frac{p}{2} }} d u\right)^{\frac{1}{p} }=\left(\int_{v}^{\infty} {\left(u^{2}+v^{2}\right)^{-\frac{p}{4}}} d u\right)^{\frac{1}{p}} \\
&=v^{\frac{1}{p}-\frac{1}{2}}\left(\int_{1}^{\infty} {\left(1+x^{2}\right)^{-\frac{p}{4}}} d x\right)^{\frac{1}{p}} \leq c v^{\frac{1}{p}-\frac{1}{2}}.
\end{aligned}
\end{equation}

Similar estimates can be proved
$$
\left\|\frac{1}{s-k}\right\|_{L^{q}(v, \infty)} \leq c|v-\eta|^{\frac{1}{q}-1}, \quad \frac{1}{q}+\frac{1}{p}=1.
$$
Then we have
\begin{equation}
\begin{aligned}
\Omega_{3} &\leq c \int_{0}^{+\infty}\left\||s-k_0|^{-\frac{1}{2}}\right\|_{L^{p}}\left\|\frac{1}{s-k}\right\|_{L^{q}}e^{-\frac{\alpha v t}{2}(\frac{1}{(u+k_0)^2+v^2})-\frac{1}{k_0^2}}d v\\
&\leq c \int_{0}^{+\infty}\left\||s-k_0|^{-\frac{1}{2}}\right\|_{L^{p}}\left\|\frac{1}{s-k}\right\|_{L^{q}}e^{-c_1|u||v|t}d v\\
&\leq c \int_{0}^{+\infty}\left\||s-k_0|^{-\frac{1}{2}}\right\|_{L^{p}}\left\|\frac{1}{s-k}\right\|_{L^{q}}e^{-c_1v^2t}d v\\
&\leq c\left[\int_{0}^{\eta}  v^{\frac{1}{p}-\frac{1}{2}}|v-\eta|^{\frac{1}{q}-1}e^{-c_1v^2t} d v+\int_{\eta}^{\infty} v^{\frac{1}{p}-\frac{1}{2}}|v-\eta|^{\frac{1}{q}-1}e^{-c_1v^2t} d v\right]\\
&\leq H_1+H_2,
\end{aligned}
\end{equation}
the following is the estimation of the two integrals. For $H_1$, since $0\leq v<\eta$, the following estimates are obtained by replacing $v=w\eta$ with variables
$$
\begin{aligned}
H_1=&\int_{0}^{\eta}  v^{\frac{1}{p}-\frac{1}{2}}(\eta-v)^{\frac{1}{q}-1} e^{-c_1v^2t}d v\\
&=\int_{0}^{1} \eta^{\frac{1}{2}}w^{\frac{1}{p}-\frac{1}{2}}(1-w)^{\frac{1}{q}-1} e^{-c_1w^2\eta^2t}d w \\
&\leq c t^{-\frac{1}{4}}.
\end{aligned}
$$
For $H_2$, we let $w=v-\eta$, then
$$
\begin{aligned}
H_2=&\int_{\eta}^{\infty}  v^{\frac{1}{p}-\frac{1}{2}}(v-\eta)^{\frac{1}{q}-1} e^{-c_1v^2t}d v\\
&=\int_{0}^{\infty} (w+\eta)^{\frac{1}{p}-\frac{1}{2}}w^{\frac{1}{q}-1} e^{-c_1(w+\eta)^2t}d w\\
&\leq c t^{-\frac{1}{4}}\int_{0}^{\infty} w^{-\frac{1}{2}} e^{-c_1(w)^2t}d w\\
&\leq c t^{-\frac{1}{4}}.
\end{aligned}
$$
So combining the estimates of $H_1$ and $H_2$, we get $\Omega_3\leq ct^{-\frac{1}{4}}$.

In the $D_4^{-}$ region, the estimation is still divided into three parts, at this time, there is $s=u+iv$.
\begin{equation}
\begin{aligned}
|\mathbb{S}(f)| &\leq \frac{1}{\pi} \iint_{D_4^{-}} \frac{\left|f M_{rhp}^{(2)} \bar{\partial} R^{(2)} M_{rhp}^{(2)}{ }^{-1}\right|}{|s-k|} d A(s)\\
&\leq \frac{1}{\pi}\|f\|_{L^{\infty}}\left\|M_{rhp}^{(2)}\right\|_{L^{\infty}}\left\|M_{rhp}^{(2)}{ }^{-1}\right\|_{L^{\infty}} \iint_{D_{4}^{-}} \frac{\left|\bar{\partial} R_{3}\right| e^{\frac{\alpha v t}{2}(\frac{1}{u^2+v^2}-\frac{1}{k_0^2})}}{|s-k|} d A(s) \\
&\leq c\left(\Omega^{\prime}_{1}+\Omega^{\prime}_{2}+\Omega^{\prime}_{3}\right),
\end{aligned}
\end{equation}
where
$$
\begin{aligned}
&\Omega^{\prime}_{1}=\int_{0}^{\frac{k_0}{2}} \int_{v}^{\frac{k_0}{2}}\frac{\left|\bar{\partial} \Upsilon_{\mathcal{K}}(s)\right|e^{\frac{\alpha v t}{2}(\frac{1}{u^2+v^2}-\frac{1}{k_0^2})}}{|s-k|}dudv,\\
&\Omega^{\prime}_{2}=\int_{0}^{\frac{k_0}{2}} \int_{v}^{\frac{k_0}{2}}\frac{\left|r^{\prime}(\operatorname{Re}(s))\right|e^{\frac{\alpha v t}{2}(\frac{1}{u^2+v^2}-\frac{1}{k_0^2})}}{|s-k|}dudv,\\
&\Omega^{\prime}_{3}=\int_{0}^{\frac{k_0}{2}} \int_{v}^{\frac{k_0}{2}}\frac{\left|s\right|^{-\frac{1}{2} }e^{\frac{\alpha v t}{2}(\frac{1}{u^2+v^2}-\frac{1}{k_0^2})}}{|s-k|} dudv.
\end{aligned}
$$
Combined with lemma (\ref{l2}) and Eq.(\ref{sk}), we can estimate each integral. For $\Omega^{\prime}_{1}$,
\begin{equation}
\begin{aligned}\label{o1}
\Omega^{\prime}_{1} & \lesssim \int_{0}^{\frac{k_0}{2}} e^{-c_2|v|t} \int_{v}^{\frac{k_0}{2}} \frac{r^{\prime}(u)}{|s-k|} d u d v \\
& \leq c \int_{0}^{\frac{k_0}{2}} \left\|\frac{1}{s-k}\right\|_{L^{2}}e^{-c_2|v|t}  d v\\
& \leq  c \int_{0}^{\infty} \frac{e^{-c_2|v|t}}{\sqrt{|\eta-v|}}d v\\
& \leq c t^{-\frac{1}{2}}.
\end{aligned}
\end{equation}
The estimation of $ \Omega^{\prime}_{2}$ is similar to that of $\Omega^{\prime}_{1}$, the estimation method of $\Omega^{\prime}_{3}$ is similar to that of $\Omega_{3}$, and  can get
\begin{equation}
\Omega^{\prime}_{j}\leq c t^{-\frac{1}{2}},~~~j=2,3.
\end{equation}
Therefore, this proves that all regions meet (\ref{sl}).
\end{proof}

Next, consider the expansion of $M^{(3)}$,
$$
M^{(3)}(k)=I+\frac{M_{1}^{(3)}(x, t)}{k}+\mathcal{O}\left(k^{-2}\right), \quad k \rightarrow \infty,
$$
from the previous equation (\ref{m3}), it is easy to know
$$
M_{1}^{(3)}(x, t)=\frac{1}{\pi} \int_{\mathbb{C}} M^{(3)}(s) W^{(3)}(s) d A(s).
$$
Further, we can prove that
\begin{proposition}
For a large t, we have
\begin{equation}\label{m13}
\left|M_{1}^{(3)}\right| \leq c t^{-\frac{3}{4} }.
\end{equation}
\end{proposition}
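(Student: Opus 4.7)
The plan is to write $M_{1}^{(3)}(x,t) = \frac{1}{\pi}\int_{\mathbb{C}} M^{(3)}(s) W^{(3)}(s)\,dA(s)$, to decompose the integration domain into the finitely many regions $D_{j}$ on which $\bar{\partial}R^{(2)}$ is nonzero, and to bound each piece separately. First I would record that $\|M^{(3)}\|_{L^{\infty}} \le C$ uniformly in $t$ for $t$ large: this follows from the preceding Proposition, since $M^{(3)} = (I-\mathbb{S})^{-1} I$ and $(I-\mathbb{S})^{-1}$ is bounded on $L^{\infty}$ once $\|\mathbb{S}\| \le c\, t^{-1/4}$. Combined with the uniform boundedness of $M_{rhp}^{(2)}$ and $(M_{rhp}^{(2)})^{-1}$ on $\mathbb{C}\setminus(\Sigma^{(2)}\cup \mathcal{K}\cup \mathcal{K}^{*})$ that was established in Section 8, this reduces the task to controlling $\int_{D_{j}} |\bar{\partial}R_{j}|\, e^{\pm 2it\,\mathrm{Re}(i\theta)}\,dA$ on each region.

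Next, on each $D_{j}$, I would substitute the lemma's estimate $|\bar{\partial}R_{j}| \lesssim |\bar{\partial}\Upsilon_{\mathcal{K}}| + |p_{j}'(\mathrm{Re}(k))| + |k\mp k_{0}|^{-1/2}$ (respectively $|k|^{-1/2}$ on $D_{3}^{-}, D_{4}^{-}, D_{7}^{-}, D_{8}^{-}$) and split the bound into three contributions. The $|\bar{\partial}\Upsilon_{\mathcal{K}}|$ piece is supported in annuli around $\mathcal{K}\cup\mathcal{K}^{*}$, which are separated from the phase points, so the oscillatory factor yields pure exponential decay $\mathcal{O}(e^{-ct})$ that is absorbed into $\mathcal{O}(t^{-3/4})$. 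For the $|p_{j}'|$ piece, I would use $p_{j}' \in L^{2}(\mathbb{R})$ (a consequence of $r \in H^{1,1}(\mathbb{R})$ from Theorem 1) and apply the Cauchy--Schwarz inequality to the $u$-integral exactly as in the $\Omega_{1}$ estimate of Proposition 11, but without the Cauchy kernel $1/(s-k)$; this replaces the $t^{-1/4}$ appearing there by $t^{-1/2}$ in the $v$-integral and then the Gaussian change of variables $w = v\sqrt{t}$ produces an extra $t^{-1/4}$, giving $\mathcal{O}(t^{-3/4})$.

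The singular piece $|s \mp k_{0}|^{-1/2}$ is the one requiring the most care; here the cleanest route is a polar change of variables centered at the phase point, $s - k_{0} = r e^{i\phi}$, which turns the exponential factor $e^{-c|u||v|t}$ into $e^{-c r^{2} \sin(2\phi) t}$ and reduces the integral to $\int r^{1/2} e^{-c r^{2}\sin(2\phi) t}\,dr \cdot \int (\sin 2\phi)^{-3/4} d\phi$, yielding again $\mathcal{O}(t^{-3/4})$ with a convergent angular integral on the cone of aperture $\pi/4$. Alternatively one can reuse the $L^{p}$ estimate $\||s - k_{0}|^{-1/2}\|_{L^{p}(k_{0}+v,\infty)} \lesssim v^{1/p - 1/2}$ already verified in (8.62) for $p > 2$. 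On the ``middle'' regions $D_{j}^{-}$ the weaker singularity $|k|^{-1/2}$ is in fact bounded (since these domains stay away from $k=0$) and the improved damping $e^{-c_{2}|v|t}$ is stronger, so the same scheme produces $\mathcal{O}(t^{-1})$ there and is subsumed.

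The main obstacle will be the interplay between the non-integrable looking factor $|s - k_{0}|^{-1/2}$ and the degenerate exponential damping $e^{-c|u||v|t}$, which loses effectiveness both along the real axis and along the ray $\phi = 0$ emerging from $k_{0}$. The polar-coordinate computation (or the $L^{p}$--$L^{q}$ duality argument already used in Proposition 11) is what forces the precise exponent $3/4$: the three factors $r^{1/2}$ from the Jacobian minus the singularity, $r^{1/2}$ from $\int r^{1/2} e^{-cr^{2}t}dr \sim t^{-3/4}$, and the convergent angular weight $(\sin 2\phi)^{-3/4}$, balance to give exactly $t^{-3/4}$. Once each region is estimated in this fashion, summing over the finitely many $D_{j}$ yields the stated bound $|M_{1}^{(3)}| \leq c\, t^{-3/4}$.
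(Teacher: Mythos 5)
Your proposal is correct and follows essentially the same route as the paper: uniform $L^{\infty}$ bounds on $M^{(3)}$, $M^{(2)}_{rhp}$ and $(M^{(2)}_{rhp})^{-1}$, the three-way splitting of $|\bar{\partial}R_{j}|$ from the lemma, Cauchy--Schwarz in $u$ for the $|p_{j}'|$ term, an $L^{p}$--$L^{q}$ (equivalently polar-coordinate) estimate for the $|s\mp k_{0}|^{-1/2}$ term, and the Gaussian rescaling $v=wt^{-1/2}$ producing the exponent $3/4$. The only cosmetic differences are that you dispatch the $\bar{\partial}\Upsilon_{\mathcal{K}}$ contribution by pure exponential decay (valid, since its support lies at distance at least $\varrho/3$ from $\mathbb{R}$), whereas the paper runs the same Cauchy--Schwarz scheme on it, and that you favor polar coordinates over the paper's H\"older computation for the singular piece.
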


\begin{proof}
Owing to  $M_{rhp}^{(2)}$ is bounded outside the pole, there is
\begin{equation}
\begin{aligned}
\left|M_{1}^{(3)}\right| &\leq \frac{1}{\pi} \iint_{D_5} \left|M^{(3)} M_{rhp}^{(2)} \bar{\partial} R^{(2)} M_{rhp}^{(2)}{ }^{-1}\right| d A(s)\\
&\leq \frac{1}{\pi}\left\|M^{(3)}\right\|_{L^{\infty}}\left\|M_{rhp}^{(2)}\right\|_{L^{\infty}}\left\|\left(M_{rhp}^{(2)}\right)^{-1}\right\|_{L^{\infty}} \iint_{D_5}\left|\bar{\partial} R_{5} e^{2 i t \theta}\right| d A(s) \\
&\leq c(\iint_{D_{5}}|\bar{\partial}\Upsilon_{\mathcal{K}}(s)| e^{-\frac{\alpha v t}{2}(\frac{1}{(u+k_0)^2+v^2}-\frac{1}{k_0^2})} d A(s)+\iint_{D_5}|p_5^{\prime}(u)| e^{-\frac{\alpha v t}{2}(\frac{1}{(u+k_0)^2+v^2}-\frac{1}{k_0^2})} d A(s)\\
&~~~~~+\iint_{D_{5}} \frac{1}{\left|s-k_0\right|^{1 / 2}} e^{-\frac{\alpha v t}{2}(\frac{1}{(u+k_0)^2+v^2}-\frac{1}{k_0^2})} d A(s)) \\
&\leq c\left(\Omega_{4}+\Omega_{5}+\Omega_{6}\right).
\end{aligned}
\end{equation}
We constrain $\Omega_{4}$ by using the Cauchy-Schwarz inequality
$$
\begin{aligned}
\left|\Omega_{4}\right| \leq & \int_{0}^{\infty}\left\|\bar{\partial} \Upsilon_{\mathcal{K}}\right\|_{L_{u}^{2}\left(v+k_0, \infty\right)}\left(\int_{v}^{\infty}e^{-c_1 t |u| |v|}  d u\right)^{\frac{1}{2}} d v \\
\leq & c t^{-\frac{1}{2}} \int_{0}^{\infty} \frac{e^{-4 t v^{2}}}{\sqrt{v}} dv \\
\leq & c t^{-\frac{1}{4}} \int_{0}^{\infty} \frac{e^{-4 w^{2}}}{\sqrt{w}} d w \leq c t^{-\frac{3}{4}}.
\end{aligned}
$$
Similar constraints can be done for $\Omega_{5}$. For the constraint of $\Omega_{6}$, we follow the method of $\Omega_{3}$ and use H\"{o}lder  inequality and Eq.(\ref{slp}) to obtain
$$
\begin{aligned}
I_{3}=& \int_{0}^{+\infty} \int_{k_0+v}^{+\infty}\left(\left(u-k_0\right)^{2}+v^{2}\right)^{-\frac{1}{4}} e^{-\frac{\alpha v t}{2}(\frac{1}{(u+k_0)^2+v^2}-\frac{1}{k_0^2})}  d u d v \\
& \leq \int_{0}^{+\infty}\left\|\left(\left(u-k_0\right)^{2}+v^{2}\right)^{-1 / 4}\right\|_{L^{p}}\left(\int_{k_0+v}^{+\infty} e^{-c_1q |u| |v| t}du \right)^{\frac{1}{q}} d v\\
& \leq \int_{0}^{+\infty}v^{\frac{1}{p}-\frac{1}{2}}\left(\int_{k_0+v}^{+\infty} e^{-c_1q |u| |v| t}du \right)^{\frac{1}{q}} d v\\
&\leq c t^{-\frac{1}{q} } \int_{0}^{+\infty} v^{\frac{2}{p}-\frac{3}{2}} e^{-c_1q |v+k_0| |v| t} d v,\\
&\leq c t^{-\frac{3}{4} } \int_{0}^{\infty} w^{\frac{2}{p} -\frac{3}{2} } e^{-c_1q w^2 t} d w \leq c t^{-\frac{3}{4}}
\end{aligned}
$$
Here we replace the variable $v=wt^{-\frac{1}{2}}$. Notice that here $2<p<4$ and $-1<\frac{2}{p} -\frac{3}{2}<-\frac{1}{2}$.
\end{proof}

\section{Long time asymptotic behavior of soliton solution region for the coupled dispersive AB
system}
After many deformations above, we now begin to construct the long-term asymptotic properties of the coupled dispersive AB
system (\ref{AB}). Looking back at the previous transformation, we have
\begin{equation}\label{mt}
M(k)=M^{(3)}(k) M^{err}(k) M^{out}(k) \mathcal{R}^{(2)}(k)^{-1} T(k)^{\sigma_{3}}, \quad k \in \mathbb{C} \backslash\mathcal{A}_1\cup\mathcal{A}_2.
\end{equation}
In particular, in the vertical direction $k\in D_2,D_{10}$, there is $\mathcal{R}^{(2)} = I$, so we consider $k\rightarrow\infty$ in this region, so
$$
M=\left(I+\frac{M_{1}^{(3)}}{k}+\ldots\right)\left(I+\frac{M^{err}_{1}}{k}+\ldots\right)\left(I+\frac{M_{1}^{out}}{k}
+\ldots\right)\left(I+\frac{T_{1}^{\sigma_{3}}}{k}+\ldots\right).
$$
In order to recover the potential, the coefficient of ${k}^{-1}$ needs to be collected
$$
M_{1}=M_{1}^{o u t}+M^{err}_{1}+M_{1}^{(3)}+T_{1}^{\sigma_{3}},
$$
so there
\begin{equation}
\begin{aligned}
&A= 4 i(M_{1}^{o u t}+M_{1}^{err})_{12}+\mathcal{O}(t^{-\frac{3}{4}}),\\
&B=-\frac{4 i}{\beta}\frac{d}{dt} (M_{1}^{o u t}+M_1^{err})_{11}+\mathcal{O}(t^{-\frac{3}{4}}).
\end{aligned}
\end{equation}

Based on the above formula, the specific asymptotic state can be written in the form of the following theorem
\begin{theorem}
Suppose $A_0,B_0\in H^{1,1}(\mathbb{R})$ have general scattering data, and $A(x,t)$ and $B(x,t)$ are the solutions of equation (\ref{AB}). For fixed $x_1 < x_2$ with $x_1,x_2\in \mathbb{R}$ and $v_1 < v_2$ with $v_1,v_2\in \mathbb{R}^{-}$, a conical region can be defined as
$$
\mathcal{C}\left(x_{1}, x_{2}, v_{1}, v_{2}\right)=\left\{(x, t) \in \mathbb{R}^{2} \mid x=x_{0}+v t, x_{0} \in\left[x_{1}, x_{2}\right], v \in\left[v_{1}, v_{2}\right]\right\}.
$$
Then we define two regions for the spectral parameters
$$\mathcal{I}=\left\{k: f\left(v_{2}\right)<|k|<f\left(v_{1}\right)\right\}, \quad f(v) \doteq\left(-\frac{\alpha}{4v}\right)^{1 / 2},$$
as is shown in Fig. \ref{t10}. Using $A_{s o l}(x, t,k|\widetilde{\mathcal{D}})$ and $B_{s o l}(x, t,k|\widetilde{\mathcal{D}})$ for the coupled dispersive AB system corresponding to $N(\mathcal{I}) \leq N$  modulated non reflection scattering data
$$
\widetilde{D}(\mathcal{I})=\{k_j,c_j(\mathcal{I})\},~c_{j}(\mathcal{I})=c_{j}\prod_{k_n\in \mathcal{K}( \mathcal{I})}\left(\frac{k_{j}-k_{n}}{k_{j}-{k}^{*}_{n}}\right)^{2}.
$$
Then when $t\rightarrow\infty$ and $(x, t)\in \mathcal{C} (x_1, x_2, v_1, v_2)$, there is
$$
\begin{aligned}
&A(x, t)=A_{s o l}(x, t|\widetilde{D}(\mathcal{I}))+t^{-\frac{1}{2}}(g_1+g_2)+\mathcal{O}(t^{-\frac{3}{4}}),\\
&B(x, t)=B_{s o l}(x, t|\widetilde{D}(\mathcal{I}))+t^{-\frac{1}{2}}(h_1+h_2+f_1+f_2)+\mathcal{O}(t^{-\frac{3}{4}}),
\end{aligned}
$$
where
$$
M^{out}\left(\pm k_0\right)=\left(\begin{array}{cc}
m_{11}(\pm k_0) & m_{12}(\pm k_0) \\
m_{21}(\pm k_0) & m_{22}(\pm k_0)
\end{array}\right), ~j=1,2\\
$$
$$
\begin{aligned}
&g_1=4\sqrt{\frac{-k_0^3}{\alpha}}(m^2_{12}(-k_0)\Xi_{21}(-k_0)+m^2_{11}(-k_0)\Xi_{12}(-k_0)),\\
&g_2=4\sqrt{\frac{-k_0^3}{\alpha}}(m^2_{12}(k_0)\Xi_{21}(k_0)+m^2_{11}(k_0)\Xi_{12}(k_0)),\\
&h_1=\frac{4}{\beta}\sqrt{\frac{-k_0^3}{\alpha}}\frac{d}{dt}(m_{12}(-k_0)m_{22}(-k_0)\Xi_{21}(-k_0)+m_{11}(-k_0)m_{21}(-k_0)\Xi_{12}(-k_0)),\\
&h_2=\frac{4}{\beta}\sqrt{\frac{-k_0^3}{\alpha}}\frac{d}{dt}(m_{12}(k_0)m_{22}(k_0)\Xi_{21}(k_0)+m_{11}(k_0)m_{21}(k_0)\Xi_{12}(k_0)),\\
&f_1=\frac{2}{\beta t}\sqrt{\frac{-k_0^3}{\alpha}}m_{12}(-k_0)m_{22}(-k_0)\Xi_{21}(-k_0)+m_{11}(-k_0)m_{21}(-k_0)\Xi_{12}(-k_0),\\
&f_2=\frac{2}{\beta t}\sqrt{\frac{-k_0^3}{\alpha}}m_{12}(k_0)m_{22}(k_0)\Xi_{21}(k_0)+m_{11}(k_0)m_{21}(k_0)\Xi_{12}(k_0).
\end{aligned}
$$
\end{theorem}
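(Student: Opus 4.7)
The plan is to invert the chain of transformations recorded in \eqref{mt} and read off the reconstruction coefficients at infinity. In the two vertical sectors $D_{2}$ and $D_{10}$ the lens matrix satisfies $\mathcal{R}^{(2)}(k)\equiv I$, so along these directions one has $M(k)=M^{(3)}(k)M^{err}(k)M^{out}(k)T(k)^{\sigma_{3}}$ as $k\to\infty$. Expanding every factor in powers of $1/k$ and collecting the coefficient of $k^{-1}$ yields $M_{1}=M_{1}^{out}+M_{1}^{err}+M_{1}^{(3)}+T_{1}^{\sigma_{3}}$, and the reconstruction formulas $A=4i(M_{1})_{12}$ and $B=-\tfrac{4i}{\beta}\tfrac{d}{dt}(M_{1})_{11}$ split each component of the solution into four pieces, three of which are already controlled by earlier results.

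Next I would identify the three ``known'' pieces. Since $T_{1}^{\sigma_{3}}$ is diagonal it does not enter $(M_{1})_{12}$ and hence drops out of $A$; the piece $M_{1}^{out}$ reproduces the $N$-soliton component $A_{sol}(\cdot\mid\widetilde{\mathcal{D}})$ by the corollary of Section~8.2, and by the cone truncation of Proposition~8 together with the corollary of Section~8.3 this collapses to $A_{sol}(\cdot\mid\widetilde{\mathcal{D}}(\mathcal{I}))$ up to $\mathcal{O}(e^{-2\mu t})$. Finally, $M_{1}^{(3)}=\mathcal{O}(t^{-3/4})$ by \eqref{m13}. All remaining information is therefore packaged in $M_{1}^{err}$.

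The explicit shape of the dispersive $t^{-1/2}$ correction then follows from Proposition~11, which writes $M_{1}^{err}$ as $\tfrac{1}{i}\sqrt{-k_{0}^{3}/(\alpha t)}$ times the difference of the two dressed parabolic-cylinder residues $M^{out}(\pm k_{0})M_{1}^{PC}(\pm k_{0})M^{out}(\pm k_{0})^{-1}$, modulo $\mathcal{O}(t^{-1})$. Using $\det M^{out}(k)=1$ (so the inverse equals the adjugate) and the purely off-diagonal form of $M_{1}^{PC}(\pm k_{0})$, a direct $2\times 2$ calculation at each phase point produces the $(1,2)$-entry $m_{11}^{2}\Xi_{12}+m_{12}^{2}\Xi_{21}$ and the $(1,1)$-entry $m_{11}m_{21}\Xi_{12}+m_{12}m_{22}\Xi_{21}$. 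Multiplying by $4i$ and absorbing the explicit $t^{-1/2}$ factor recovers $g_{1}+g_{2}$. For $B$, the derivative $\tfrac{d}{dt}$ acts in two ways on the $(1,1)$-entry: falling on the $k_{0}$-dependent dressed residue it produces $h_{1}+h_{2}$, while falling on the explicit prefactor $t^{-1/2}$ it produces $f_{1}+f_{2}$ with the characteristic $\tfrac{2}{\beta t}$ coefficient coming from $-\tfrac{4i}{\beta}\cdot\tfrac{d}{dt}\,t^{-1/2}=\tfrac{2i}{\beta}t^{-3/2}$.

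The delicate step I expect to be the main obstacle is the remainder estimate for $B$: the reconstruction contains $\tfrac{d}{dt}M_{1}$, so to bound this by $\mathcal{O}(t^{-3/4})$ one needs a time-differentiable sharpening of the small-norm estimate for $M_{1}^{(3)}$ and of the $\mathcal{O}(t^{-1})$ tail in Proposition~11. I would obtain this by differentiating the Cauchy--Green representation \eqref{m3} in $t$, noting that the $t$-dependence enters the integrand only through the oscillatory factors $e^{\pm 2it\theta}$, and reusing the $\bar{\partial}$-decay lemma with $\theta_{t}$ replacing $\theta$; the extra polynomial weight introduced by the derivative is absorbed by the strict exponential decay already exhibited in the proof of Proposition~12. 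Because $k_{0}$ stays bounded along the fixed cone $\mathcal{C}(x_{1},x_{2},v_{1},v_{2})$, the prefactors $\sqrt{-k_{0}^{3}/\alpha}$ remain $\mathcal{O}(1)$, so the residuals genuinely decay at the claimed rates, and assembling the four contributions yields the expansion stated in the theorem.
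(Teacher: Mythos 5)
Your proposal follows essentially the same route as the paper: unwinding the chain of transformations \eqref{mt} in the sectors $D_{2},D_{10}$ where $\mathcal{R}^{(2)}=I$, collecting the $k^{-1}$ coefficient $M_{1}=M_{1}^{out}+M_{1}^{err}+M_{1}^{(3)}+T_{1}^{\sigma_{3}}$, and substituting the cone-truncation result, the small-norm expansion of $M_{1}^{err}$ with the conjugated parabolic-cylinder residues $M^{out}(\pm k_{0})M_{1}^{PC}(\pm k_{0})M^{out}(\pm k_{0})^{-1}$, and the bound $M_{1}^{(3)}=\mathcal{O}(t^{-3/4})$. Your remark that reconstructing $B$ requires a $t$-differentiable sharpening of the estimates for $M_{1}^{(3)}$ and of the $\mathcal{O}(t^{-1})$ tail is a genuine subtlety that the paper passes over in silence, and your proposed remedy via differentiating the Cauchy--Green representation is a reasonable way to close it.
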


\section*{Appendix A: Estimation of characteristic function}

In the previous assumption 2.1, there is no zero point of $s_{11}$ on $\mathbb{R}$, so there is no pole of $r(k)$ on $\mathbb{R}$.
From Volterra integral formula (\ref{jost})
$$
\begin{aligned}
&\psi_{11}^{\pm}(x,0,k)=1+\int_{\pm\infty}^{x}\frac{1}{2}A(y,k)\psi_{21}^{\pm}dy,\\
&\psi_{21}^{\pm}(x,0,k)=\int_{\pm\infty}^{x}-\frac{1}{2}A^{*}(y,k)\psi_{11}^{\pm}e^{2ik(x-y)}dy.
\end{aligned}
$$
Let
$$
\Psi_1^{-}=(\psi_{11}^{-},~~\psi_{21}^{-})^{T}.
$$
For $k \in \mathbb{R}$, we introduce an operator mapping 
$$
\Re_0 f=\int_{-\infty}^{x}T_0f(y)dy,
$$
and
$$
T_0=\left(\begin{array}{rr}
0 & \frac{1}{2}A \\
-\frac{1}{2}A^{*}e^{2ik(x-y)} & 0
\end{array}\right).
$$
It can be seen from this that
$$
|\Re_0f(x)| \leq \int_{-\infty}^{x}\frac{1}{2}\left|A(y)\right| d y\|f\|_{\left(L^{\infty}(-\infty, 0]\right)},
$$
which means that $\Re_0$ is a bounded linear operator on $L^{\infty}(-\infty, 0]$. Further, it can be obtained by mathematical induction that
$$
\left|\Re_0^{n} f(x)\right| \leq \frac{1}{n !}\left(\int_{-\infty}^{x}\frac{1}{2}\left|A(y)\right| d y\right)^{n}\|f\|_{\left(L^{\infty}(-\infty, 0]\right)}.
$$
Therefore, the following series is uniformly convergent on $x\in(-\infty, 0]$,
\begin{equation}
\Psi_1^{-}(x,k)=\sum_{n=0}^{\infty} \Re_0^{n}\left(\begin{array}{l}
1 \\
0
\end{array}\right).\label{p1}
\end{equation}
So for $k \in \mathbb{R}$  and $x\in(-\infty, 0]$,
$$
\left|\Psi_{1}(x, k)^{-}_{1}\right| \leq e^{\frac{1}{2}\left\|A\right\|_{L^{1}(\mathbb{R})}}.
$$
Similarly, we can get
$$
\begin{aligned}
&\left|\Psi^{-}_{2}(x,k)\right| \leq e^{\frac{1}{2}\left\|A\right\|_{L^{1}(\mathbb{R})}}, x \in(-\infty, 0]; \\
&\left|\Psi^{+}_{1}(x,k)\right| \leq e^{\frac{1}{2}\left\|A\right\|_{L^{1}(\mathbb{R})}}, x \in[0, \infty); \\
&\left|\Psi^{+}_{2}(x,k)\right| \leq e^{\frac{1}{2}\left\|A\right\|_{L^{1}(\mathbb{R})}}, x \in[0, \infty).
\end{aligned}
$$
Let's look at the derivative of the characteristic function with respect to $k$. According to the uniform convergence property of series (\ref{p1}), we have
$$
\begin{aligned}
\Psi^{-}_{1,k}(x, k) &=\sum_{n=0}^{\infty} \partial_{k}\Re_0^{n}\left(\begin{array}{l}
1 \\
0
\end{array}\right)=\sum_{n=1}^{\infty} \sum_{m=0}^{n} \Re_0^{m} \Re_0^{\prime} \Re_0^{n-m-1}\left(\begin{array}{l}
1 \\
0
\end{array}\right)
\end{aligned},
$$
where the form of $\Re_0^{\prime}$  is
$$
\Re_0^{\prime}f(x)=\int_{-\infty}^{x}\left(\begin{array}{ll}
1 & 0 \\
0 & e^{2 i k(x-y)}
\end{array}\right)\left(\begin{array}{c}
\frac{1}{2}A(y) f_{2} \\
- i(x-y) A^{*} f_{1}
\end{array}\right) d y.
$$
Similarly, $\Re_0^{\prime}$ is a bounded liner operator on $x\in(-\infty, 0]$ with
$$
\left\|\Re_0^{\prime}\right\|_{L^{\infty}(-\infty, 0]} \leq \left\|A\right\|_{L^{1,1}(\mathbb{R})}.
$$
So there is
$$
\begin{aligned}
\Psi^{-}_{1,k}(x, k)& \leq \sum_{n=1}^{\infty}\left|\partial_{k}\Re_0^{n}\left(\begin{array}{l}
1 \\
0
\end{array}\right)\right|\leq \sum_{n=0}^{\infty} \frac{1}{2}\frac{\left\|A\right\|_{L^{1}(\mathbb{R})^{n-1}}}{(n-1) !} \left\|A\right\|_{L^{1,1}(\mathbb{R})} \\
& \leq \left\|A\right\|_{L^{1,1}(\mathbb{R})} e^{\frac{1}{2}\left\|A\right\|_{L^{1}(\mathbb{R})}}
\end{aligned}
$$
for  $k \in \mathbb{R}$  and $x\in(-\infty, 0]$.

In similar steps, we can also get
$$
\Psi^{-}_{2,k}(x, k)\leq \left\|A\right\|_{L^{1,1}(\mathbb{R})} e^{\frac{1}{2}\left\|A\right\|_{L^{1}(\mathbb{R})}};
$$
$$
\Psi^{+}_{1,k}(x, k)\leq \left\|A\right\|_{L^{1,1}(\mathbb{R})} e^{\frac{1}{2}\left\|A\right\|_{L^{1}(\mathbb{R})}};
$$
$$
\Psi^{+}_{2,k}(x, k)\leq \left\|A\right\|_{L^{1,1}(\mathbb{R})} e^{\frac{1}{2}\left\|A\right\|_{L^{1}(\mathbb{R})}}.
$$
Next, we will estimate the characteristic function $(\Psi^{\pm})_{ij}(x, k),(i,j=1,2)$ respectively.  In fact, there are the following lemma
\begin{lemma}\label{l3}
For $k \in  \mathbb{R}$, we have the following estimates
$$
\begin{aligned}
&\left\|\psi_{21}^{-}(x,0,k)\right\|_{C^{0}\left(\mathbb{R}^{-}, L^{2}\left(I_{0}\right)\right)} \lesssim\|A\|_{H^{1,1}}; \\
&\left\|\psi_{21}^{-}(x,0,k)\right\|_{L^{2}\left(\mathbb{R}^{-} \times \mathbb{R}\right)} \lesssim\|A\|_{H^{1,1}};\\
&\left\|\psi_{12}^{-}(x,0,k)\right\|_{C^{0}\left(\mathbb{R}^{-}, L^{2}\left(\mathbb{R}\right)\right)} \lesssim\|A\|_{H^{1,1}}; \\
&\left\|\psi_{12}^{-}(x,0,k)\right\|_{L^{2}\left(\mathbb{R}^{-} \times \mathbb{R}\right)} \lesssim\|A\|_{H^{1,1}};\\
&\left\|\psi_{12}^{+}(x,0,k)\right\|_{C^{0}\left(\mathbb{R}^{+}, L^{2}\left(\mathbb{R}\right)\right)} \lesssim\|A\|_{H^{1,1}}; \\
&\left\|\psi_{12}^{+}(x,0,k)\right\|_{L^{2}\left(\mathbb{R}^{+} \times \mathbb{R}\right)} \lesssim\|A\|_{H^{1,1}};\\
&\left\|\psi_{21}^{+}(x,0,k)\right\|_{C^{0}\left(\mathbb{R}^{+}, L^{2}\left(\mathbb{R}\right)\right)} \lesssim\|A\|_{H^{1,1}}; \\
&\left\|\psi_{21}^{+}(x,0,k)\right\|_{L^{2}\left(\mathbb{R}^{+} \times \mathbb{R}\right)} \lesssim\|A\|_{H^{1,1}}.
\end{aligned}
$$
\end{lemma}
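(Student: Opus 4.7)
The plan is to work through the Volterra equations entry by entry; by symmetry of the structure (sign flips and half-line switches), the four pairs $\psi_{21}^\pm, \psi_{12}^\pm$ are handled by the same argument, so I will concentrate on $\psi_{21}^-$ and only indicate how the remaining cases follow. From the Volterra integral equation (\ref{jost}) at $t=0$ with $U(y) = \tfrac{1}{2}\bigl(\begin{smallmatrix}0 & A \\ -A^* & 0\end{smallmatrix}\bigr)$, one reads off $\psi_{21}^-(x,k) = -\tfrac{1}{2}\int_{-\infty}^x A^*(y)\psi_{11}^-(y,k)\,e^{2ik(x-y)}\,dy$. Splitting $\psi_{11}^- = 1 + (\psi_{11}^- - 1)$ produces a natural decomposition $\psi_{21}^- = \Phi_0 + R$, where $\Phi_0(x,k) = -\tfrac{1}{2}\int_{-\infty}^x A^*(y)e^{2ik(x-y)}dy$ is the first Neumann iterate and $R$ absorbs everything that comes from $\psi_{11}^- - 1$.

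The leading term $\Phi_0$ is, after the change of variable $u = x-y$, precisely the Fourier transform of the one-sided translate $u \mapsto A^*(x-u)\chi_{[0,\infty)}(u)$ evaluated at $-2k$. Plancherel therefore gives the exact identity $\|\Phi_0(x,\cdot)\|_{L^2_k}^2 = \tfrac{\pi}{4}\int_{-\infty}^x |A(y)|^2\,dy$, which is bounded uniformly in $x\leq 0$ by $\tfrac{\pi}{4}\|A\|_{L^2}^2 \lesssim \|A\|_{H^{1,1}}^2$; this yields the $C^0(\mathbb{R}^-,L^2(\mathbb{R}))$ estimate for $\Phi_0$. For the $L^2(\mathbb{R}^-\times\mathbb{R})$ version, Fubini turns the double integral into $\tfrac{\pi}{4}\int_{-\infty}^0 |y|\,|A(y)|^2\,dy \leq \tfrac{\pi}{4}\|yA\|_{L^2(\mathbb{R}^-)}^2 \lesssim \|A\|_{H^{1,1}}^2$. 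So the weight in $H^{1,1}$ enters exactly through this Fubini step, and this single computation produces both halves of each pair of estimates.

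The remainder $R(x,k) = -\tfrac{1}{4}\int_{-\infty}^x\!\int_{-\infty}^y A^*(y)A(s)\psi_{21}^-(s,k)e^{2ik(x-y)}ds\,dy$ is treated by viewing $\psi_{21}^- = \Phi_0 + K\psi_{21}^-$ as a fixed-point equation, where $K$ is the above double Volterra operator. Applying Minkowski's integral inequality to the $L^2_k$ norm and using $|e^{2ik(x-y)}|=1$ gives $\|K\phi(x,\cdot)\|_{L^2_k} \leq \tfrac{1}{4}\int_{-\infty}^x\!\int_{-\infty}^y |A(y)||A(s)|\,\|\phi(s,\cdot)\|_{L^2_k}\,ds\,dy$. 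Iterating this on the simplex $x>y_1>\cdots>y_{2n}$ produces the factorial gain $\|K^n\Phi_0\|_{C^0_xL^2_k} \leq \tfrac{\|A\|_{L^1}^{2n}}{(2n)!}\|\Phi_0\|_{C^0_xL^2_k}$, and the Neumann sum yields $\|\psi_{21}^-\|_{C^0_xL^2_k} \leq \cosh(\tfrac{1}{2}\|A\|_{L^1})\|\Phi_0\|_{C^0_xL^2_k}$. Since $\|A\|_{L^1} \leq \sqrt{\pi}\,\|A\|_{H^{1,1}}$ by Cauchy--Schwarz against the weight $\langle y\rangle^{-1}$, the constant is locally bounded by $\|A\|_{H^{1,1}}$, which absorbs into $\lesssim$. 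The same iteration scheme goes through with $\|\cdot\|_{L^2(\mathbb{R}^-\times\mathbb{R})}$ replacing $\|\cdot\|_{C^0_xL^2_k}$, reusing the Fubini identity for the base term $\Phi_0$.

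The other six estimates follow by the identical argument with the endpoint $\pm\infty$, the sign of the oscillating exponent, or the roles of $A$ and $A^*$ interchanged; in every case the key integrand factor entering Plancherel is either $A$ or $A^*$, whose $L^2$ norms coincide. The only step that genuinely uses the $H^{1,1}$ weight (beyond $\|A\|_{L^2}$) is the Fubini rearrangement that extracts the $|y|$ factor; the remaining effort is Neumann-series bookkeeping. The main technical point that requires care is verifying that the factorial gain from the simplex ordering survives the $L^2_k$ Minkowski argument, so that the Neumann series converges without any smallness assumption on $\|A\|_{H^{1,1}}$; this works precisely because the only $k$-dependent factor in $K$ is the modulus-$1$ exponential, so Minkowski costs nothing beyond the $L^1$ norms of the non-oscillating integrands.
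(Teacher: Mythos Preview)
Your argument is correct and follows essentially the same route as the paper: isolate the Fourier-type integral coming from the Volterra equation, control its $L^2_k$ norm via Plancherel, and use the Volterra/Neumann structure to handle the full Jost function, with the $H^{1,1}$ weight entering exactly through the Fubini step that produces the factor $|y|$ in the $L^2(\mathbb{R}^-\times\mathbb{R})$ estimate. The paper organizes things slightly differently (it first records the uniform $L^\infty_{x,k}$ bound on $\psi_{11}^-$ from the Neumann series and then invokes duality, rather than iterating directly in $C^0_xL^2_k$ as you do), but the ingredients are identical; one small slip to fix is the line $\int_{-\infty}^0|y|\,|A|^2\,dy\le\|yA\|_{L^2}^2$, which is false as written---replace it by $\int|y|\,|A|^2\le\|\langle y\rangle A\|_{L^2}^2\lesssim\|A\|_{H^{1,1}}^2$.
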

\begin{proof}
From the above integral form, we can know that
$$
\psi_{21}^{-}(x,0,k)=\int_{-\infty}^{x}-\frac{1}{2}A^{*}(y,k)\psi_{11}^{-}e^{2ik(x-y)}dy,
$$
then for $k \in  \mathbb{R}$, $\forall \sigma \in C_{0}\left(\mathbb{R}\right)$, we compute
$$
\begin{aligned}
\left\| \int_{-\infty}^{x}-\frac{1}{2}A^{*}\psi_{11}^{-}e^{2ik(x-y)}dy\right\|_{L^{2}\left(I_{0}\right)}^{2}&=s u p{ }_{\|\phi\|=1} \int_{I_{0}} \sigma(k) \int_{-\infty}^{x} -\frac{1}{2}A^{*}\psi_{11}^{-}e^{2ik(x-y)}dy\\
 &\lesssim ce^{\frac{1}{2}\left\|A\right\|_{L^{1}(\mathbb{R})}}\left\|A\right\|_{L^{1}},
 \end{aligned}
$$
this estimate is direct. In addition,
$$
\int_{\mathbb{R}^{+}} \int_{\mathbb{R}}\left|\int_{-\infty}^{x}-\frac{1}{2}A^{*}\psi_{11}^{-}e^{2ik(x-y)}dy\right|^{2} d k d x \lesssim \int_{\mathbb{R}^{+}}\left|\int_{-\infty}^{x}-\frac{1}{2}A^{*}\psi_{11}^{-}e^{2ik(x-y)}dy\right|^{2} d x \lesssim\|A\|_{L^{1,1}},
$$
which  implies that for $k \in \mathbb{R}$
$$
\begin{aligned}
&\left\|\psi_{21}^{-}(x,0,k)\right\|_{C^{0}\left(\mathbb{R}^{-}, L^{2}\left(\mathbb{R}\right)\right)} \lesssim\|A\|_{H^{1,1}}, \\
&\left\|\psi_{21}^{-}(x,0,k)\right\|_{L^{2}\left(\mathbb{R}^{-} \times \mathbb{R}\right)} \lesssim\|A\|_{H^{1,1}}.
\end{aligned}
$$
Similarly, others can also be obtained.
\end{proof}

Its derivative form for $k$ has
$$
\psi_{21,k}^{-}(x,0,k)=\int_{-\infty}^{x}-\frac{1}{2}A^{*}2i(x-y)\psi_{11}^{-}e^{2ik(x-y)}dy+\int_{-\infty}^{x}-\frac{1}{2}A^{*}\psi_{11,k}^{-}e^{2ik(x-y)}dy,
$$
Similarly, the following estimates can be obtained
$$
\begin{aligned}
&\left\|\psi_{21,k}^{-}(x,0,k)\right\|_{C^{0}\left(\mathbb{R}^{-}, L^{2}\left(\mathbb{R}\right)\right)} \lesssim\|A\|_{H^{1,1}} \\
&\left\|\psi_{21,k}^{-}(x,0,k)\right\|_{L^{2}\left(\mathbb{R}^{-} \times \mathbb{R}\right)} \lesssim\|A\|_{H^{1,1}}
\end{aligned}
$$
According to Eq.(\ref{s112})
$$
\begin{aligned}
&s_{11}(k)-1=(\psi^{-}_{11}-1)(\psi^{+}_{22}-1)+(\psi^{-}_{11}-1)+(\psi^{+}_{22}-1)-\psi^{-}_{21}\psi^{+}_{12}.\\
&s_{12}(k)=e^{2ikx}(\psi^{-}_{12}\psi^{+}_{22}-\psi^{-}_{22}\psi^{+}_{12}).
\end{aligned}
$$
Let's examine $\psi^{-}_{11}-1$ and $\psi^{+}_{22}-1$.
$$
\begin{aligned}
&\psi^{-}_{11}-1=\int_{-\infty}^{x}\frac{1}{2}A(y)\psi_{21}^{-}dy,\\
&\psi^{+}_{22}-1=\int_{-\infty}^{x}-\frac{1}{2}A^{*}(y)\psi_{12}^{+}dy,
\end{aligned}
$$
it's easy to know that $\psi^{-}_{11}-1$  is bounded and can be obtained by using Lemma \ref{l3} that
$$
\begin{aligned}
&\left\|\psi_{11}^{-}(x,0,k)-1\right\|_{C^{0}\left(\mathbb{R}^{-}, L^{2}\left(\mathbb{R}\right)\right)} \lesssim\|A\|_{H^{1,1}} \\
&\left\|\psi_{11}^{-}(x,0,k)-1\right\|_{L^{2}\left(\mathbb{R}^{-} \times \mathbb{R}\right)} \lesssim\|A\|_{H^{1,1}}.
\end{aligned}
$$
The estimation of $\psi^{+}_{22}-1$ can be obtained similarly.
For
$$
s_{11,k}=\psi^{-}_{11,k}\psi^{+}_{22}+\psi^{-}_{11}\psi^{+}_{22,k}-\psi^{-}_{21,k}\psi^{+}_{12}-\psi^{-}_{21}\psi^{+}_{12,k}
$$
where
$$
\psi^{-}_{11,k}=\int_{-\infty}^{x}\frac{1}{2}A(y,k)\psi_{21,k}^{-}dy, ~~~\psi^{+}_{22,k}=\int_{\infty}^{x}\frac{1}{2}A(y,k)\psi_{12,k}^{+}dy.
$$
A similar operation is performed on $s_{12}(k)$. Tracing back to the above estimates, when $k \in \mathbb{R}$ and  initial value $A \in H^{1,1}$, there is
$$
\begin{aligned}
s_{11}(k)\in L^{2}(\mathbb{R}),~~~s_{11,k}(k)\in L^{2}(\mathbb{R}),\\
s_{12}(k)\in L^{2}(\mathbb{R}),~~~s_{12,k}(k)\in L^{2}(\mathbb{R}).\\
\end{aligned}
$$

\section*{Appendix B: Solvable parabolic cylinder model}

Here we mainly describe the solution of the parabolic cylinder model introduced above. For the cAB equation studied in this paper, since there are two steady-state phase points, we need two parabolic cylinders to describe it. but their expansion forms are the same, using the following model.

For $r_0\in \mathbb{R}$, if $\nu=-\frac{1}{2 \pi} \log \left(1+\left|r_{0}\right|^{2}\right)$, define the contour $\Sigma^{PC}=\bigcup_{j=1}^{4} \Sigma_{j}$,
$$
\Sigma_{j}=\{\xi \in \mathbb{C}: \arg \xi=(2 j-1) \pi / 4\}, \quad j=1,2,3,4
$$
These four contours divide the plane into six areas $D_j,j=1..6$, as shown in Fig. \ref{16t}.
\begin{figure}[htpb]
{\includegraphics[width=9cm,height=4.5cm]{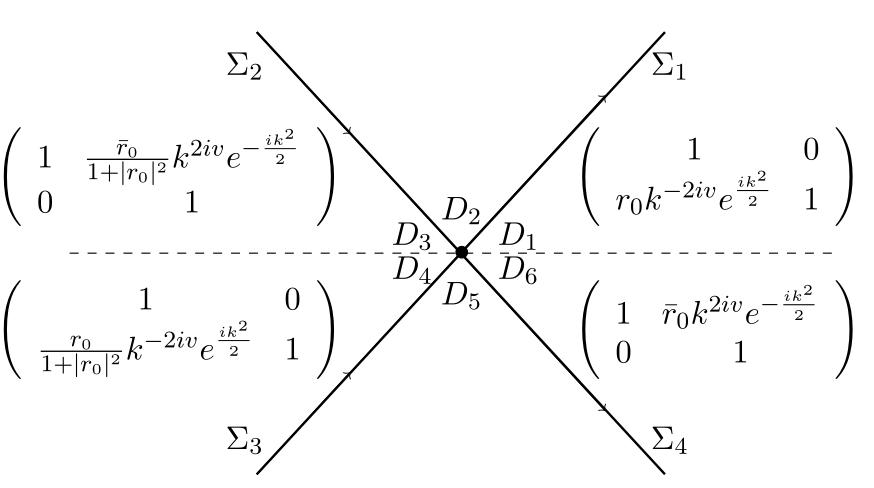}}
\centering
\caption{\small  The jump contour $V^{PC}$ for the $M^{PC}(k)$.}\label{16t}
\end{figure}

Therefore, we can consider the RHP corresponding to the following parabolic cylinder model.
\begin{rhp}\label{14r}
Find a matrix-valued function $M^{PC}(x,t,k)$ which satisfies:

$\bullet$  Analyticity: $M(x,t,k)$ is analytic in $\mathbb{C}\backslash\Sigma^{PC}$;

$\bullet$   Asymptotic behaviors: $M^{PC}(k)=I+\frac{M_{1}^{PC}}{ik}+O\left(k^{-2}\right), \quad k \rightarrow \infty;$

$\bullet$  Jump condition: $M^{PC}(x,t,k)$ has continuous boundary values $M^{PC}_{\pm}(x,t,k)$ on $\mathbb{R}$ and
\begin{equation}
M^{PC}_{+}(x, t, k)=M^{PC}_{-}(x, t, k) V^{PC}(k), \quad k \in \mathbb{R},
\end{equation}
where
\begin{equation}
V^{PC}(k)= \begin{cases}\left(\begin{array}{cc}
1 & 0 \\
r_{0} k^{-2 i \nu} e^{ \frac{ik^{2}}{2}} & 1
\end{array}\right), & k \in \Sigma_{1}, \\
\left(\begin{array}{cc}
1 & \frac{\bar{r}_{0}}{1+\left|r_{0}\right|^{2}} k^{2 i \nu} e^{- \frac{ik^{2}}{2}} \\
0 & 1
\end{array}\right), & k \in \Sigma_{2}, \\
\left(\begin{array}{cc}
1 & 0 \\
\frac{r_{0}}{1+\left|r_{0}\right|^{2}}k^{-2 i \nu} e^{ \frac{ik^{2}}{2}} & 1
\end{array}\right), & k \in \Sigma_{3}, \\
\left(\begin{array}{cc}
1& \bar{r}_{0} k^{2 i \nu} e^{- \frac{ik^{2}}{2}} \\
0&1
\end{array}\right), & k \in \Sigma_{4}.\end{cases}
\end{equation}
\end{rhp}
Make changes
\begin{equation}\label{pc}
M^{PC}=\vartheta \mathcal{P} k^{-i \nu \sigma_{3}} e^{\frac{ik^{2}}{4}  \sigma_{3}},
\end{equation}
where
$$
\mathcal{P}(k)= \begin{cases}\left(\begin{array}{cc}
1 & 0 \\
-r_{0} & 1
\end{array}\right), & k \in D_{1}, \\
\left(\begin{array}{cc}
1 & \frac{-\bar{r}_{0}}{1+\left|r_{0}\right|^{2}} \\
0 & 1
\end{array}\right), & k \in D_{3}, \\
\left(\begin{array}{cc}
1 & 0 \\
\frac{r_{0}}{1+\left|r_{0}\right|^{2}} & 1
\end{array}\right), & k \in D_{4}, \\
\left(\begin{array}{cc}
1 & \bar{r}_{0} \\
0 & 1
\end{array}\right), & k \in D_{6}, \\
\left(\begin{array}{ll}
1 & 0 \\
0 & 1
\end{array}\right), & k \in D_{2} \cup D_{5}.\end{cases}
$$
Then we can get a standard RHP with jump only in $k=0$.
\begin{rhp}\label{15r}
Find a matrix-valued function $\vartheta(x,t,k)$ which satisfies:

$\bullet$  Analyticity: $\vartheta(x,t,k)$ is analytic in $\mathbb{C}\backslash \mathbb{R}$;

$\bullet$   Asymptotic behaviors: $\vartheta(k) e^{\frac{i k^{2}}{4} \sigma_{3}} k^{-i \nu \sigma_{3}} =I+O\left(k^{-1}\right), \quad k \rightarrow \infty;$

$\bullet$  Jump condition: $ \vartheta_{+}(x,t,k)=\vartheta_{-}(x,t,k)v(0)$.
\end{rhp}
The above RH problem can be reduced to Weber equation,
$$
\partial_{z}^{2} D_{a}(z)+\left[\frac{1}{2}-\frac{z^{2}}{4}+a\right] D_{a}(z)=0,
$$
where $$
\begin{aligned}
&z=e^{-\frac{3 i \pi}{4}} k,~~~
a=i \Xi_{12} \Xi_{21}=i \nu,\\
&\Xi_{12}=\frac{\sqrt{2 \pi} e^{i \pi / 4 e^{-\pi \nu / 2}}}{r_{0} \Gamma(-a)}, \quad \Xi_{21}=\frac{-\sqrt{2 \pi} e^{-i \pi / 4 e^{-\pi \nu / 2}}}{\bar{r}_{0} \Gamma(a)}=\frac{\nu}{\Xi_{12}}.
\end{aligned}
$$
The solution of parabolic cylindrical explicit solution $\vartheta(k)$ can be given by parabolic cylindrical function. Obtained from Eq.(\ref{pc})
$$
M^{PC}(k)=I+\frac{M_{1}^{PC}}{ik}+O\left( k^{-2}\right),
$$
where
$$
M_{1}^{PC}=\left(\begin{array}{cc}
0 & \Xi_{12} \\
-\Xi_{21} & 0
\end{array}\right).
$$


\begin{thebibliography}{99}
\bibitem{VAE-1978-ZETF} Belinskii V A, Zakharov V E. Integration of the Einstein equations by means of the inverse scattering problem technique and construction of exact soliton solutions. Zh. Eksp. Teor. Fiz. 1978;75(6): 1955-1971.
\bibitem{KN-1978-JMP}Kaup D J, Newell A C. An exact solution for a derivative nonlinear Schr\"{o}dinger equation. J. Math. Phys. 1978;19(4): 798-801.
\bibitem{VAK-1989-PD} Arkadiev V A, Pogrebkov A K, Polivanov M C. Inverse scattering transform method and soliton solutions for Davey-Stewartson II equation. Physica D: Nonlinear Phenomena. 1989;36(1-2): 189-197.

\bibitem{LB-2020-ND}Liu N, Guo B. Solitons and rogue waves of the quartic nonlinear Schr\"{o}dinger equation by Riemann-Hilbert approach. Nonlinear Dynamics. 2020;100(1): 629-646.

 \bibitem{ZL-2013-PRE}Zhao L C, Liu J. Rogue-wave solutions of a three-component coupled nonlinear Schr\"{o}dinger equation. Physical Review E. 2013;87(1): 013201.
    \bibitem{JH-2018-JNMP}Hu J, Xu J, Yu G F. Riemann-Hilbert approach and N-soliton formula for a higher-order Chen-Lee-Liu equation. Journal of Nonlinear Mathematical Physics. 2018; 25(4): 633-649.
\bibitem{YB-2019-NAR} Yang B, Chen Y. High-order soliton matrices for Sasa-Satsuma equation via local Riemann-Hilbert problem. Nonlinear Anal-Real. 2019;45:918-941.
  \bibitem{LT-2022-NAR} Li Y, Tian S F. Inverse scattering transform and soliton solutions of an integrable nonlocal Hirota equation. Communications on Pure, Applied Analysis. 2022, 21(1): 293.
\bibitem{M-1974-SPJ}Manakov S V. Nonlinear Fraunhofer diffraction. Zh. Eksp. Teor. Fiz, 1973, 65(4): 10.

\bibitem{AN-1973-JMP}Ablowitz M J, Newell A C. The decay of the continuous spectrum for solutions of the Korteweg-deVries equation. Journal of Mathematical Physics. 1973; 14(9): 1277-1284.
\bibitem{ZM-1976-SPJ} Zakharov V E, Manakov S V. Asymptotic behavior of non-linear wave systems integrated by the inverse scattering method. Soviet Journal of Experimental and Theoretical Physics. 1976; 44: 106.
\bibitem{Its-1981-DAN}Its A R. Asymptotics of solutions of the nonlinear schr\"{o}dinger equation and isomonodromic deformations of systems of linear differential equations. Doklady Akademii Nauk. Russian Academy of Sciences. 1981; 261(1): 14-18.
\bibitem{DZ-1993-AM}Deift P, Zhou X. A steepest descent method for oscillatory Riemann-Hilbert problems. Bulletin of the American Mathematical Society. 1992;26(1): 119-123.
\bibitem{XD-1994-UT}Zhou X, Deift P. Long-Time Behavior of the Non-focusing Nonlinear Schr\"{o}dinger Equation-A Case Study, Lectures in Mathematical Sciences, Graduate School of Mathematical Sciences, University of Tokyo. 1994.
\bibitem{KG-2009-MPAG}Grunert K, Teschl G. Long-time asymptotics for the Korteweg-de Vries equation via nonlinear steepest descent. Mathematical Physics, Analysis and Geometry. 2009; 12(3): 287-324.
\bibitem{AD-2013-N} de Monvel A B, Shepelsky D. A Riemann-Hilbert approach for the Degasperis-Procesi equation. Nonlinearity. 2013;26(7): 2081--2107.
\bibitem{PJ-1999-CPDE} Cheng P J, Venakides S, Zhou X. Long-time asymptotics for the pure radiation solution of the sine-gordon equation. History and philosophy of logic. 1999; 24(7-8): 1195-1262.
\bibitem{LJ-2018-JDE} Huang L, Lenells J. Nonlinear Fourier transforms for the sine-Gordon equation in the quarter plane. Journal of Differential Equations. 2018; 264(5): 3445-3499.
\bibitem{JE-2015-JDE} Xu J, Fan E. Long-time asymptotics for the Fokas-Lenells equation with decaying initial value problem: without solitons. Journal of Differential Equations. 2015; 259(3): 1098-1148.
  \bibitem{XFC-2013-MPAG}  Xu J, Fan E, Chen Y. Long-time asymptotic for the derivative nonlinear Schr\"{o}dinger equation with step-like initial value. Mathematical Physics, Analysis and Geometry. 2013; 16(3): 253-288.
      \bibitem{AA-2009-JMA} De Monvel A B, Kostenko A, Shepelsky D, Teschl G. Long-time asymptotics for the Camassa-Holm equation. SIAM journal on mathematical analysis. 2009; 41(4): 1559-1588.
\bibitem{AI-2010-JMA} de Monvel A B, Its A, Shepelsky D. Painlev\'{e}-type asymptotics for the Camassa-Holm equation. SIAM Journal on Mathematical Analysis. 2010;42(4): 1854-1873.
\bibitem{MM-2006-IMRN} McLaughlin K T R, Miller P D. The $\bar{\partial}$ steepest descent method and the asymptotic behavior of polynomials orthogonal on the unit circle with fixed and exponentially varying nonanalytic weights. Int. Math. Res. Not. 2006;48673.
\bibitem{MM-2008-IMRN} McLaughlin K T R, Miller P D. The $\bar{\partial}$  steepest descent method for orthogonal polynomials on the real line with varying weights. International Mathematics Research Notice. 2008; 075.
\bibitem{DM-2019-FIC} Dieng M, McLaughlin K D T. Dispersive asymptotics for linear and integrable equations by the Dbar steepest descent method, Nonlinear dispersive partial differential equations and inverse scattering. Fields Inst. Commun. 2019; 83:253-291.
\bibitem{SR-2016-CMP}Cuccagna S, Jenkins R. On asymptotic stability of N-solitons of the defocusing nonlinear Schr\"{o}dinger equation. Commun. Math. Phys. 2016; 343: 921-969.
\bibitem{MR-2018-AN} Borghese M, Jenkins R, McLaughlin K D T R. Long time asymptotic behavior of the focusing nonlinear Schr\"{o}dinger equation. Ann. Inst. Henri Poincar\'{e}, Anal. Non Lin\'{e}aire. 2018; 35(4): 887-920.
\bibitem{RJ-2018-CMP} Jenkins R, Liu J, Perry P, Sulem C. Soliton resolution for the derivative nonlinear Schr\"{o}dinger equation. Communications in Mathematical Physics. 2018; 363(3): 1003-1049.
\bibitem{LP-2018-AI} Liu J, Perry P, Sulem C. Long-time behavior of solutions to the derivative nonlinear Schr\"{o}dinger equation for soliton-free initial data, Ann.I. H. Poincar\'{e}-AN. 2018; 35: 217-265.
\bibitem{P-2017-N} Giavedoni P. Long-time asymptotic analysis of the Korteweg-de Vries equation via the dbar steepest descent method: the soliton region. Nonlinearity. 2017; 30(3): 1165-1181.
  \bibitem{DM-2019-N} Dieng M, McLaughlin K D T R, Miller P D. Dispersive Asymptotics for Linear and Integrable Equations by the Dbar Steepest Descent Method,in Nonlinear Dispersive Partial Differential Equations and Inverse Scattering. Springer, New York, NY, 2019; 253-291.
 \bibitem{YTL-2021-ar}   Yang J J, Tian S F, Li Z Q. Soliton resolution for the Hirota equation with weighted Sobolev initial data[J]. arXiv preprint arXiv:2101.05942, 2021.
     \bibitem{CF-2022-JDE}  Cheng Q, Fan E. Long-time asymptotics for the focusing Fokas-Lenells equation in the solitonic region of space-time. Journal of Differential Equations. 2022; 309: 883-948.
   \bibitem{YF-2022-AM} Yang Y, Fan E. On the long-time asymptotics of the modified Camassa-Holm equation in space-time solitonic regions. Advances in Mathematics, 2022; 402: 108340.
  \bibitem{YF-2021-JDE}Yang Y, Fan E. Soliton resolution for the short-pulse equation. Journal of Differential Equations, 2021, 280: 644-689.
\bibitem{JD-1979-PRSL} Gibbon J D, James I N, Moroz I M. An example of soliton behaviour in a rotating baroclinic fluid. Proceedings of the Royal Society of London. A. Mathematical and Physical Sciences. 1979; 367(1729): 219-237.
 \bibitem{AC-1983-ox}   Fowler A C, Gibbon J D, McGuinness M J. The real and complex Lorenz equations and their relevance to physical systems. 1983.
\bibitem{JD-1981-PRSL}Gibbon J D, McGuinness M J. Amplitude equations at the critical points of unstable dispersive physical systems. Proceedings of the Royal Society of London. A. Mathematical and Physical Sciences, 1981, 377(1769): 185-219.
\bibitem{JP-1972-JAS} Pedlosky J. Finite-amplitude baroclinic wave packets. Journal of Atmospheric Sciences. 1972; 29(4): 680-686.
 \bibitem{JD-1979-PS} Gibbon J D, James I N, Moroz I M. The sine-Gordon equation as a model for a rapidly rotating baroclinic fluid. Physica Scripta. 1979; 20(3-4): 402.
\bibitem{AM-1995-JPA} Kamchatnov A M, Pavlov M V. Periodic solutions and Whitham equations for the AB system. Journal of Physics A: Mathematical and General. 1995; 28(11): 3279.
 \bibitem{GR-2013-ND} Guo R, Hao H Q, Zhang L L. Dynamic behaviors of the breather solutions for the AB system in fluid mechanics. Nonlinear Dynamics, 2013;74(3): 701-709.
  \bibitem{GL-2015-AMC} Guo R, Liu Y F. The canonical AB system: conservation laws and soliton solutions. Applied Mathematics and Computation, 2015; 259: 153-163.
 \bibitem{WX-2015-CN}  Wang X, Li Y, Huang F, Chen Y. Rogue wave solutions of AB system. Communications in Nonlinear Science and Numerical Simulation, 2015; 20(2): 434-442.
\bibitem{WY-2015-CHAOS} Wen X Y, Yan Z. Modulational instability and higher-order rogue waves with parameters modulation in a coupled integrable AB system via the generalized Darboux transformation. Chaos: An Interdisciplinary Journal of Nonlinear Science. 2015; 25(12): 123115.
\bibitem{JD-1981-PRSLA} Gibbon J D, McGuinness M J. Amplitude equations at the critical points of unstable dispersive physical systems. Proceedings of the Royal Society of London. A. Mathematical and Physical Sciences. 1981; 377(1769): 185-219.
\bibitem{SZ-2021-JMAA} Chen S, Yan Z. Long-time asymptotics of solutions for the coupled dispersive AB system with initial value problems. Journal of Mathematical Analysis and Applications. 2021; 498(2): 124966.
 \bibitem{YT-2021-ar}Yang J J, Tian S F, Li Z Q. Soliton resolution for the Hirota equation with weighted Sobolev initial data. arXiv preprint. arXiv:2101.05942, 2021.
  \bibitem{CLL-2009-ar} Chen G, Liu J, Lu B. Long-time asymptotics and stability for the sine-Gordon equation. arXiv preprint. arXiv:2009.04260, 2020.


\end{thebibliography}
\end{document}